\pgfplotsset{compat=1.18}
\definecolor{wrwrwr}{rgb}{0.3803921568627451,0.3803921568627451,0.3803921568627451}
\definecolor{rvwvcq}{rgb}{0.08235294117647059,0.396078431372549,0.7529411764705882}
\definecolor{mblue}{rgb}{0.2, 0.3, 0.8}
\definecolor{morange}{rgb}{1, 0.5, 0}
\definecolor{mgreen}{rgb}{0.1, 0.4, 0.2}
\definecolor{mred}{rgb}{0.5, 0, 0}
\definecolor{ForestGreen}{RGB}{34,139,34}
\numberwithin{equation}{section}
\newcommand{\la}{\langle}
\newcommand{\rg}{\rangle}
\newtheorem{theorem}{{Theorem}}[section]
\newtheorem*{theorem*}{Theorem}
\newtheorem{lemma}[theorem]{Lemma}
\newtheorem{conjecture}[theorem]{Conjecture}
\newtheorem{proposition}[theorem]{Proposition}
\newtheorem{corollary}[theorem]{Corollary}
\newtheorem*{corollary*}{Corollary}
\theoremstyle{definition}
\newcommand{\ve}{\varepsilon}
\newcommand{\cA}{\mathcal{A}}
\newcommand{\cC}{\mathcal{C}}
\newcommand{\cH}{\mathcal{H}}
\newcommand{\cJ}{\mathcal{J}}
\newcommand{\cL}{\mathcal{L}}
\newcommand{\cR}{\mathcal{R}}
\newcommand{\cS}{\mathcal{S}}\newcommand{\cT}{\mathcal{T}}
\newcommand{\bN}{\mathbb{N}}
\newcommand{\bR}{\mathbb{R}}
\newcommand{\bS}{\mathbb{S}}
\newcommand{\nc}{\newcommand}
\nc{\on}{\operatorname}
\nc{\p}{\partial}
\nc{\ol}{\overline}
\nc{\ul}{\underline}
\nc{\pa}{\partial}
\nc{\pb}{\partial_b}
\nc{\pc}{\partial_c}
\nc{\pd}{\partial_d}
\nc{\pe}{\partial_e}
\nc{\pf}{\partial_f}
\nc{\pg}{\partial_g}
\nc{\ph}{\partial_h}
\nc{\pari}{\partial_i}
\nc{\pj}{\partial_j}
\nc{\pk}{\partial_k}
\nc{\pl}{\partial_l}
\nc{\pell}{\partial_\ell}
\nc{\parm}{\partial_m}
\nc{\pn}{\partial_n}
\nc{\po}{\partial_o}
\nc{\pp}{\partial_p}
\nc{\pq}{\partial_q}
\nc{\pr}{\partial_r}
\nc{\ps}{\partial_s}
\nc{\pt}{\partial_t}
\nc{\pu}{\partial_u}
\nc{\pv}{\partial_v}
\nc{\pw}{\partial_w}
\nc{\px}{\partial_x}
\nc{\py}{\partial_y}
\nc{\pz}{\partial_z}
\numberwithin{equation}{section}
\renewcommand{\underbar}[1]{\underaccent{\bar}{#1}}
\title{Stability inequalities for one-phase cones}
\date{\today}
\author[Benjy Firester]{Benjy Firester}
\address{
Department of Mathematics, MIT \newline 
{\href{mailto:benjyfir@mit.edu}{benjyfir@mit.edu}}}
\author[Raphael Tsiamis]{Raphael Tsiamis}
\address{
Department of Mathematics, Columbia University \newline
{\href{mailto:r.tsiamis@columbia.edu}{r.tsiamis@columbia.edu}}}
\author[Yipeng Wang]{Yipeng Wang}
\address{
Department of Mathematics, Columbia University \newline
{\href{mailto:yw3631@columbia.edu}{yw3631@columbia.edu}}}
\begin{document}

\begin{abstract}
We obtain strict stability inequalities for homogeneous solutions of the one-phase Bernoulli problem.
We prove that in dimension $7$ and above, cohomogeneity one solutions with bi-orthogonal symmetry are strictly stable.
As a consequence, we obtain a bound on the first eigenvalue and the decay rates of Jacobi fields, with applications to the generic regularity of the one-phase problem.
\end{abstract}

\maketitle

\vspace*{-0.25in}

%\tableofcontents

\section{Introduction}

We prove the strict stability of a class of homogeneous solutions to the one-phase free boundary problem.
The one-phase Bernoulli free boundary problem models steady configurations of an incompressible inviscid fluid with a free surface.
Such configurations arise as critical points of an energy functional consisting of two components: a kinetic term and a volume term encoding the effect of the exterior pressure, defined via the following model.
Given an open domain $B$ in Euclidean space $\bR^n$ and a non-negative function $u$ on $B$, the \textit{Alt-Caffarelli functional} is defined as  
\begin{equation}\label{eqn:alt-caffarelli}
    \cJ(u, B) := \int_B |\nabla u|^2 + \chi_{ \{ u > 0 \} } .
\end{equation}
A critical point of the Alt-Caffarelli functional solves the \textit{one-phase free boundary problem}
\begin{equation}\label{eqn:one-phase-problem}
\Delta u = 0 \quad \text{in } \; \{ u > 0 \}, \qquad |\nabla u | = 1 \quad \text{on } \; \partial \{ u > 0 \}.
\end{equation}
In addition to its description of fluids and its significance in partial differential equations, the one-phase problem also has many known connections to the theory of minimal surfaces~\cites{traizet, jerison-kamburov}.
The Alt-Caffarelli functional has therefore attracted great interest for many decades, with foundational work done by Caffarelli, Jerison, Kenig, and others~\cites{alt-caffarelli, caffarelli-jerison-kenig, caffarelli-friedman, caffarelli-salsa, weiss}.

A fundamental question regarding the Alt-Caffarelli functional is the regularity of minimizing and stable solutions of the problem~\eqref{eqn:one-phase-problem}.
Critical points of $\cJ$ satisfy a monotonicity formula, due to Weiss~\cite{weiss}, which is saturated by homogeneous solutions (cones).
Therefore, taking rescalings of a given stable (resp.~minimizing) solution at a potential singular point produces a stable (resp.~minimizing) cone; together with a dimension-reduction argument, this connects the regularity theory for stable critical points with the study of stable homogeneous solutions.

In this paper, we settle the question of stability for all cohomogeneity one solutions of the one-phase problem with \textit{bi-orthogonal symmetry}, i.e., under an orthogonal group action of $O(n-k) \times O(k)$ on $\bR^n$.
These are the one-phase analogues of Lawson cones from minimal surface theory~\cite{lawson}.
Analogous estimates for Lawson cones were obtained by De~Philippis-Maggi~\cite{sharp-stability-plateau}, with Z.~Liu addressing a number of exceptional cones~\cite{lawson-liu}.

\begin{theorem}\label{thm:stability-of-one-phase-cones}
    For every $n \geq 3$ and $1 \leq k \leq n-2$, there exists a unique homogeneous $O(n-k) \times O(k)$-invariant solution of the one-phase problem, given by
    \[
    U_{n,k}(x,y) := c_{n,k}\sqrt{|x|^2 + |y|^2} \cdot f_{n,k} \left( \frac{|y|}{\sqrt{|x|^2 + |y|^2}} \right), \qquad f_{n,k}(t) := {}_2 F_1 \left( \frac{n-1}{2}, - \frac{1}{2} ; \frac{k}{2} ; t^2 \right)
    \]
    with free boundary given by the cone $\Gamma_{n,k} = \{ (x,y) : |y| \leq t_{n,k} \sqrt{|x|^2+ |y|^2} \}$.
    For every $n \leq 6$ and $1 \leq k \leq n-2$, this solution is unstable; for every $n \geq 7$ and $1 \leq k \leq n-2$, it is strictly stable.
\end{theorem}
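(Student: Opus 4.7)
The theorem splits into establishing existence and uniqueness of the homogeneous solution $U_{n,k}$ and then computing the stability quadratic form; the latter is the main content.

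\emph{Existence and uniqueness.} Substituting the ansatz $u(x,y) = r\,g(|y|/r)$ with $r = \sqrt{|x|^2+|y|^2}$ and $s = |y|/r$ into $\Delta u = 0$ for an $O(n-k)\times O(k)$-invariant, homogeneous, degree-$1$ function reduces the PDE to a linear second-order ODE for $g$ on $(0,1)$ of Gauss hypergeometric type with parameters $\bigl(\tfrac{n-1}{2}, -\tfrac{1}{2}; \tfrac{k}{2}\bigr)$. Requiring smoothness at $s = 0$ selects $f_{n,k}$ up to a multiplicative constant; its first positive zero $t_{n,k}$ defines $\Gamma_{n,k}$, and the Bernoulli condition $|\nabla u|=1$ on the free boundary then fixes $c_{n,k}$ uniquely.

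\emph{Stability quadratic form.} The second variation of $\cJ$ at $U_{n,k}$ along a compactly supported normal perturbation $\zeta$ of $\Gamma_{n,k}$ has the form
\begin{equation*}
    Q[\zeta] = \int_{\{U_{n,k} > 0\}} |\nabla v|^2\, dx - \int_{\Gamma_{n,k}} H\, \zeta^2\, d\cH^{n-1},
\end{equation*}
where $v$ is the harmonic extension of $\zeta$ into $\{U_{n,k} > 0\}$ and $H$ is the mean curvature of $\Gamma_{n,k}$ (with signs chosen so that stability corresponds to $Q \geq 0$); strict stability means $Q[\zeta] \geq c\,\|\zeta\|^2$ for some fixed $c > 0$ in the natural weighted norm.

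\emph{Reduction to a Hardy-type ODE inequality.} By scale invariance I would test with $\zeta(r,\theta) = r^{\alpha}\phi(\theta)$ for $\theta$ on the link $\Sigma = \Gamma_{n,k}\cap S^{n-1}$ and optimize over the homogeneity exponent $\alpha$, converting $Q \geq 0$ into a spectral inequality on $\Sigma$. Decomposing $\phi$ in joint spherical harmonics on $S^{n-k-1}\times S^{k-1}$, non-constant modes contribute non-negatively, so the worst direction is $O(n-k)\times O(k)$-invariant, i.e.\ $\phi = \phi(s)$ with $s \in (0,t_{n,k})$. For such symmetric perturbations the stability inequality becomes a one-dimensional Hardy-type problem on $(0,t_{n,k})$ whose weights are built from $s^{k-1}(1-s^2)^{(n-k-1)/2}$ and derivatives of $f_{n,k}$. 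Using the ODE satisfied by $f_{n,k}$ together with contiguous relations for ${}_2F_1$ to put the potential term into integrated-by-parts form, I would verify by direct computation that this inequality, with a positive eigenvalue gap, holds precisely when $n \geq 7$. Instability for $n \leq 6$ would follow by exhibiting a compactly supported $\phi$ realizing $Q[\zeta]<0$; a natural candidate arises from differentiating $U_{n,k}$ along a one-parameter family of nearby homogeneous solutions.

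\emph{Main obstacle.} The crux is the sharp Hardy inequality and pinning the critical dimension as exactly $n=7$. Unlike in the Lawson cone setting of De~Philippis--Maggi, the linearized free boundary (Bernoulli) condition changes the effective weight, and the hypergeometric identities must be carefully exploited to rewrite the mean-curvature term so that a completion-of-squares/Cauchy--Schwarz argument is sharp at $n=7$. Upgrading non-negativity to strict stability will likely demand either an eigenvalue gap from the ODE spectrum or a compactness argument on the link $\Sigma$.
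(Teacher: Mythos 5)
Your setup is the right one and matches the paper's: the existence/uniqueness part via the hypergeometric ODE is exactly Lemmas~\ref{lemma:alpha-homogeneous} and~\ref{lemma:homogeneous-solution-fn,k}, and the reduction of stability to a one-dimensional spectral condition on the link for the $O(n-k)\times O(k)$-invariant mode is the Caffarelli--Jerison--Kenig criterion that the paper formalizes in Proposition~\ref{prop:spectral-analysis} and Corollary~\ref{cor:stability-criterion}. However, there is a genuine gap: the step you describe as ``verify by direct computation that this inequality, with a positive eigenvalue gap, holds precisely when $n\geq 7$'' is not a computation but the entire content of the theorem. There are infinitely many pairs $(n,k)$, and the resulting inequality involves the logarithmic derivative of ${}_2F_1\bigl(\frac{n+\alpha-2}{2},-\frac{\alpha}{2};\frac{k}{2};t^2\bigr)$ evaluated at the (non-explicit) first zero $t_{n,k}$ of $f_{n,k}$. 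Establishing it uniformly requires (i) sharp asymptotic bounds on $t_{n,k}$ in several regimes of $k/n$ (Lemmas~\ref{lemma:hypergeometric-zero-estimates} and~\ref{lemma:oscillation-bound}), (ii) a Riccati-comparison mechanism to propagate positivity of the relevant boundary quantity backwards from a point where it can be estimated (Lemma~\ref{lemma:riccati-g-is-positive}), and (iii) a case analysis over edge, small, intermediate, and large $k$ with bespoke subsolution constructions (Section~\ref{section:properties-of-subsolutions}). None of this is present or even gestured at in your plan beyond ``contiguous relations and completion of squares.''

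You also miss the structural device that makes the verification tractable: the symmetry $\alpha\leftrightarrow 2-n-\alpha$ of $g_{n,k,\alpha}$ implies the admissible set $\cA_{n,k}$ of exponents is an interval centered at $\frac{2-n}{2}$ (Lemma~\ref{lemma:interval-of-alpha}), so it suffices to check the inequality at the far more computable exponent $\alpha=4-n$ rather than at the critical exponent $\frac{2-n}{2}$ itself; this is also what yields the stronger Proposition~\ref{prop:strict-subsolution}. Two smaller points: your claim that non-constant joint spherical harmonics contribute non-negatively needs justification (the paper gets this from Courant's nodal domain theorem applied to the Robin problem on the link), and for instability in $n\leq 6$ the relevant finitely many cases are settled by the numerical verifications of Caffarelli--Jerison--Kenig and Hong rather than by a variational competitor of the type you propose.
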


The proof of Theorem~\ref{thm:stability-of-one-phase-cones} involves a detailed analysis of the linearized operator for the one-phase solutions $U_{n,k}$.
In particular, this involves studying inequalities and other properties of the hypergeometric functions $f_{n,k}(t)$.
Previously, certain special cases of Theorem~\ref{thm:stability-of-one-phase-cones} were computed in low dimensions.
Caffarelli-Jerison-Kenig, following their proof that stable homogeneous one-phase solutions are flat in dimension $3$, formulated a criterion for the stability of axisymmetric cones.
Studying the examples $U_{n,1}$, they numerically verified their instability, for $n \leq 6$, and stability for $7 \leq n \leq 12$.
Later, Hong~\cite{hong-singular} considered homogeneous one-phase solutions with large symmetry groups via the Caffarelli-Jerison-Kenig (CJK) criterion and computed that all solutions $U_{n,k}$ with $n \leq 6$ are unstable.
Moreover, he verified the stability of the solution $U_{7,5}$, cf.~\cite{hong-singular}*{Example 9} (with $p = n-k-1$ in our notation).

Crucially, De~Silva-Jerison~\cite{desilva-jerison-cones} proved that the axisymmetric solution $U_{7,1}$ is minimizing for $\cJ$.
Combined with the work of Jerison-Savin~\cite{jerison-savin} on the flatness of stable cones in dimensions $n \leq 4$, this proves that the critical dimension where a singular minimizer of the Alt-Caffarelli functional can appear satisfies $5 \leq d^* \leq 7$.
In our companion paper~\cite{FTW-1}, we prove that the one-phase cones $U_{n,k}$ are (strictly) minimizing for $n \geq 7$, except possibly for twenty exceptional pairs $(n,k)$ with $7 \leq n \leq 19$ and $n-4 \leq k \leq n-2$.
In particular, $U_{7,2}$ exhibits a new example of a minimizing cone in dimension $7$.
The results of the present paper are crucial for that property: in the process of proving Theorem~\ref{thm:stability-of-one-phase-cones}, we obtain the following stronger conclusion.
\begin{proposition}\label{prop:strict-subsolution}
    For every $n \geq 7$ and $1 \leq k \leq n-2$, let $g_{n,k}(t) := {}_2 F_1 ( 1 , \frac{n-4}{2} ; \frac{k}{2} ; t^2)$.
    We define the function
    \[
    V_{n,k}(x,y) := ( |x|^2 + |y|^2)^{\frac{4-n}{2}} g_{n,k} \left( \frac{|y|}{\sqrt{|x|^2 + |y|^2}} \right).
    \]
    Then, $V_{n,k}$ is a subsolution of the linearized equation at the cone $U_{n,k}$.
\end{proposition}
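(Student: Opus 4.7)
The plan is to verify the two components of the subsolution condition: the interior (Laplacian) equation on the positivity set $\Omega := \{U_{n,k} > 0\}$, and a one-sided version of the linearized Bernoulli boundary condition on the free boundary $F := \partial\Omega$. Since $U_{n,k}$ is harmonic in $\Omega$ with $|\nabla U_{n,k}| = 1$ on $F$, the linearized operator consists of the Laplacian on $\Omega$ together with a Robin-type boundary condition on $F$. The interior computation is direct; the boundary inequality at $t_{n,k}$ is the main obstacle.

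Working in cohomogeneity-one coordinates $(r, t) = (|(x,y)|, |y|/|(x,y)|)$, the Laplacian acts on functions of the form $r^\alpha h(t)$ by
\[
\Delta\bigl(r^\alpha h(t)\bigr) = r^{\alpha-2}\bigl[\alpha(\alpha + n - 2)\, h(t) + \cL_{n,k}\, h(t)\bigr],
\]
where $\cL_{n,k}\, h := (1-t^2) h'' + ((k-1)/t - (n-1) t) h'$ is the ${}_2F_1$ hypergeometric operator in $z = t^2$. By definition $\cL_{n,k}\, g_{n,k} = (2n-8)\, g_{n,k}$; taking $\alpha = 4 - n$ gives $\alpha(\alpha + n - 2) = 8 - 2n$, and the two contributions cancel to yield $\Delta V_{n,k} \equiv 0$ on $\Omega$. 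The interior subsolution condition holds with equality.

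The free boundary $F = \{t = t_{n,k}\}$ is a cone over the product-of-spheres link $L = S^{n-k-1}(\sqrt{1-t_{n,k}^2}) \times S^{k-1}(t_{n,k}) \subset S^{n-1}$. From the warped product form of the round metric on $S^{n-1}$, the outward (from $\Omega$) mean curvature of $F$ at a point $r\omega$ equals $H_F = r^{-1}\bigl[(k-1)\sqrt{1-t_{n,k}^2}/t_{n,k} - (n-k-1)\, t_{n,k}/\sqrt{1-t_{n,k}^2}\bigr]$, while $\partial_\nu V_{n,k} = r^{3-n}\sqrt{1-t_{n,k}^2}\, g_{n,k}'(t_{n,k})$. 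Substituting into the linearized Robin-type condition and cancelling common positive factors, the boundary subsolution condition reduces to the one-variable inequality
\[
(1 - t_{n,k}^2)\, t_{n,k}\, g_{n,k}'(t_{n,k}) \,\geq\, \bigl[(n-2) t_{n,k}^2 - (k-1)\bigr]\, g_{n,k}(t_{n,k}),
\]
equivalently, the statement that the derivative of $t^{k-1}(1-t^2)^{(n-k-1)/2} g_{n,k}(t)$ is nonnegative at $t = t_{n,k}$.

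The hard part is proving this inequality at the first positive zero of $f_{n,k}$. The plan is to analyze the auxiliary quantity $Q(t) := (1-t^2) t\, g_{n,k}'(t) - [(n-2) t^2 - (k-1)]\, g_{n,k}(t)$, so that the claim becomes $Q(t_{n,k}) \geq 0$. Using $\cL_{n,k} g_{n,k} = (2n-8) g_{n,k}$, one computes the simpler formula $Q'(t) = (1-2t^2) g_{n,k}'(t) - 4t\, g_{n,k}(t)$, and at the origin $Q(0) = k - 1 \geq 0$. The remaining step, which is the bulk of the technical work, requires controlling the sign of $Q$ on $[0, t_{n,k}]$. This will exploit the positivity of the Taylor coefficients of $g_{n,k}$ for $n \geq 7$, the monotonicity of $g_{n,k}$ on $[0, t_{n,k}]$, and Sturm-type comparison with $f_{n,k}$, which satisfies $\cL_{n,k} f_{n,k} = -(n-1) f_{n,k}$ and vanishes precisely at $t_{n,k}$.
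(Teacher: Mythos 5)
Your reduction is correct and coincides with the paper's: $V_{n,k}=\rho^{4-n}g_{n,k,4-n}(t)$ is harmonic by construction (this is Lemma~\ref{lemma:alpha-homogeneous} with $\alpha=4-n$, since $\alpha(\alpha+n-2)=8-2n$), and the boundary condition $\partial_\nu V_{n,k}+HV_{n,k}<0$ reduces, via the mean curvature computation of Proposition~\ref{prop:spectral-analysis} and the sign identity~\eqref{eqn:sign-boundary-term}, to exactly the one-variable inequality you write, namely $Q(t_{n,k})>0$, which is condition~\eqref{eqn:g-n,k-inequality} for $\alpha=4-n$, i.e.\ the statement $4-n\in\cA_{n,k}$. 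Your formula $Q'(t)=(1-2t^2)g'-4tg$ checks out. Up to this point you have reproduced the content of Corollary~\ref{cor:stability-criterion} specialized to $\alpha=4-n$.

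The genuine gap is that the inequality $Q(t_{n,k})>0$ is never proved; this is not a routine remaining step but the entire substance of the proposition, occupying Sections~3 and~4 of the paper. The tools you gesture at cannot close it as stated. Positivity of the Taylor coefficients of $g_{n,k}$ and its monotonicity hold for every $n\geq 5$, yet the inequality \emph{fails} for $n\leq 6$ (the cones are unstable there, and $\cA_{n,k}=\varnothing$), so any argument must use $n\geq 7$ in an essential, quantitative way. Concretely, the sign of $Q$ at $t_{n,k}$ hinges on where the zero $t_{n,k}$ of $f_{n,k}$ sits relative to the interval on which $Q>0$; the paper first shows via a Riccati analysis (Lemma~\ref{lemma:riccati-g-is-positive}) that $Q$ changes sign at most once on $(0,1)$, so that positivity propagates backward from a single point, and then must establish sharp, $k/n$-dependent upper bounds on $s_{n,k}=t_{n,k}^2$ (Lemmas~\ref{lemma:hypergeometric-zero-estimates} and~\ref{lemma:oscillation-bound}, via turning-point asymptotics and connection formulas for ${}_2F_1$) together with explicit Riccati sub-solutions and a case split over the ranges of $k$ (Lemmas~\ref{lem:alpha-n,1}--\ref{lemma:4-n-final-part}). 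A Sturm comparison between $\cL_{n,k}f=-(n-1)f$ and $\cL_{n,k}g=(2n-8)g$ gives at best interlacing information about zeros, not the refined logarithmic-derivative bound at $t_{n,k}$; and the observation $Q(0)=k-1\geq 0$ with the formula for $Q'$ does not control the sign of $Q$ at $t_{n,k}$ because $Q'$ changes sign on the relevant interval. As written, the proposal is a correct setup plus a plan, not a proof.
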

To prove that the solution $U_{n,k}$ is minimizing, one seeks to construct a global subsolution of the problem~\eqref{eqn:one-phase-problem} lying below the cone $U_{n,k}$ (resp.~a supersolution lying above $U_{n,k}$).
It is natural to start this construction ``near infinity'' (i.e., when $\rho = \sqrt{|x|^2 + |y|^2}$ is sufficiently large), where the global sub- and supersolution conditions are well-approximated by the linearized problem.
We therefore define a subsolution profile $W^-$ (resp.~a supersolution profile $W^+$) near infinity by
\[
W_- := U_{n,k} - \rho^{\alpha_-} g_-(t), \qquad W_+ := U_{n,k} + \rho^{\alpha_+} g_+(t) ,
\]
with $\alpha_{\pm}$ chosen so that $\rho^{\alpha_{\pm}} g_{\alpha_{\pm}}(t)$ is a homogeneous harmonic function.
Then, $W_{\pm}$ will have the required barrier property near infinity if and only if $\rho^{\alpha_{\pm}} g_{\alpha_{\pm}}(t)$ is a subsolution of the linearized problem at $U_{n,k}$.
Next, the existence of global barrier functions reduces to the question of appropriately gluing a compact profile to the data given by $W_{\pm}$.
Certain exponents are more amenable than others to such a gluing; notably, Proposition~\ref{prop:strict-subsolution} shows that $\alpha = 4-n$ is a valid exponent near infinity, and with additional work one can promote $W_-$ to a global subsolution.
We refer the reader to the discussion of~\cite{FTW-1} for the details of this argument; see also~\cite{desilva-jerison-cones}.

\subsection{Generic regularity}\label{section:generic-regularity}

Proposition~\ref{prop:strict-subsolution} has additional significance because it produces a bound on the first eigenvalue and the decay rate of Jacobi fields on the cone $U_{n,k}$.
In~\cite{generic-regularity-II}, a key ingredient to the generic regularity of minimizers for the Alt-Caffarelli energy was a strictly negative dimensional upper bound for $\Lambda_n := \sup_{ U \in \cS \cC(\bR^n) } \lambda_1$, the infimum of the absolute value of the first stability eigenvalue as $U$ ranges over all singular minimizing cones in $\bR^n$, denoted $\cS \cC(\bR^n)$.
The quantities $\Lambda_n$ and 
\begin{equation}\label{eqn:gamma-n-definition}
\gamma_n := -  \frac{n-2}{2} +\sqrt{ \Bigl( \frac{n-2}{2} \Bigr)^2 + \Lambda_n } \in \Bigl[ - \frac{n-2}{2}  , 0 \Bigr)
\end{equation}
figure crucially in this theory.
~\cite{generic-regularity-II}*{Theorem 1.3} shows that any minimizing solution $U$ of~\eqref{eqn:alt-caffarelli} in dimension $d \geq d^* + 3$ (for $d^*$ the critical dimension admitting a singular minimizing cone) can be generically perturbed, in an appropriate sense, to nearby solutions $U_t$ which satisfy
\[
\dim_{\cH} \on{sing} (U_t) \leq d - d^* - 2 + \gamma_d .
\]
Therefore, it is important to understand the sharp values of $(\gamma_n, \Lambda_n)$ as well as the cones on which these are attained, cf.~\cite{generic-regularity-II}*{Open Questions 1 and 2}.
The minimal surface counterpart of this question was proven by Zhu~\cite{zhu-shrinkers}, who used techniques from mean curvature flow to prove the corresponding quantity is maximized by Lawson's cones $C(\bS^{n-k-1} \times \bS^k)$ independently of $k$.
Working in the class of $O(n-k) \times O(k)$-invariant one-phase cones $U_{n,k}$, we observe a different phenomenon: for fixed $n$, the quantity $\lambda_1(n,k)$ is strictly increasing in $k$, attaining its maximum at $k=n-2$.
It is therefore natural to formulate the conjecture
\begin{conjecture}\label{conj:eigenvalue}
The sharp maximum $\Lambda_n$ is attained by the cone $U_{n,n-2}$.
\end{conjecture}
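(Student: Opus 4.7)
The plan is to compare the explicit bi-orthogonal eigenvalue $\lambda_1(n,n-2)$, which can be characterized within the hypergeometric framework of Theorem~\ref{thm:stability-of-one-phase-cones}, against $\lambda_1(U)$ for an arbitrary singular minimizing cone $U$ in $\bR^n$. The heart of the argument is therefore to establish two things: an explicit identification of $\lambda_1(n,n-2)$ together with monotonicity in $k$ inside the bi-orthogonal family, and an extremality statement declaring that no other minimizing cone can produce a larger stability eigenvalue.

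First I would pin down $\lambda_1(n,n-2)$. Since $U_{n,k}$ is cohomogeneity one, the stability eigenvalue problem on the link separates in the variable $t = |y|/\rho$ into a scalar ODE with coefficients of hypergeometric type, supplemented by a Robin-type boundary condition at $t = t_{n,k}$ coming from the linearized free-boundary relation. Using the ${}_2 F_1$ identities already deployed in the proof of Theorem~\ref{thm:stability-of-one-phase-cones} and Proposition~\ref{prop:strict-subsolution}, $\lambda_1(n,k)$ can be read off as the smallest spectral parameter making this boundary condition compatible. The strict monotonicity $\lambda_1(n,k) < \lambda_1(n,k+1)$ asserted in the paragraph preceding the conjecture should follow either from a Sturm-type comparison as the aperture $t_{n,k}$ sweeps, or from a domain-monotonicity statement for the associated quadratic form. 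Combined with the minimality of $U_{n,n-2}$ established in~\cite{FTW-1}, this pins $\lambda_1(n,n-2)$ as the candidate value of $\Lambda_n$.

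Second I would turn to extremality: show $\lambda_1(U) \leq \lambda_1(n,n-2)$ for every $U \in \cS \cC(\bR^n)$. My plan is a two-step symmetrization-and-reduction. The symmetrization step would average the first stability eigenfunction of $U$ against a candidate $O(n-k) \times O(k)$-action and compare Rayleigh quotients, using convexity properties of the hypergeometric profile to show that the symmetrized configuration cannot lower $\lambda_1$. The reduction step would apply dimension descent: tangent cones at singular points of $U$ are themselves lower-dimensional singular minimizing cones, and an inductive hypothesis on $n$ --- using the base cases already verified in low dimensions by Hong's work~\cite{hong-singular} and the results of Theorem~\ref{thm:stability-of-one-phase-cones} --- would confine the extremals to the bi-orthogonal family, inside which monotonicity in $k$ closes the argument at $k = n-2$.

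The principal obstacle is that no classification of singular minimizing one-phase cones is known outside the $O(n-k) \times O(k)$-invariant family, so the symmetrization step has no ambient rigidity to anchor it. Moreover, the free-boundary term in the stability form mixes ambient and boundary geometry in a way unlike the minimal-surface setting, so Zhu's argument from~\cite{zhu-shrinkers} --- which hinges on mean-curvature-flow monotonicity and the Simons-type classification --- does not transfer directly. A realistic intermediate goal would be to verify the conjecture within broader invariant classes, e.g.\ cones preserved by a product $O(k_1) \times \cdots \times O(k_r)$ with $\sum k_i = n$, using the same hypergeometric methodology, and to couple this with the full symmetrization step if and when the classification of minimizing one-phase cones becomes available.
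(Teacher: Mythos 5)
The statement you are trying to prove is a \emph{conjecture}, and the paper does not prove it: the only support offered is the numerical evidence of Table~\ref{tab:eigenvalues} (values of $t_{n,k}$, $-\lambda_1(n,k)$, $-\gamma_+(n,k)$ for $7 \leq n \leq 12$) together with the closing remark that further hypergeometric analysis ``could be applied'' to show that $|\lambda_1(n,k)|$ is decreasing in $k$ within the bi-orthogonal family. So there is no proof in the paper against which your argument can be matched, and your proposal does not close the gap either. Even the first, within-family step --- strict monotonicity of $\lambda_1(n,k)$ in $k$ --- is only observed numerically in the paper; your suggestion of a Sturm comparison or domain monotonicity is plausible but unexecuted, and it is not obvious, because as $k$ varies both the weight $p(t) = t^{k-1}(1-t^2)^{\frac{n-k}{2}}$ in the Sturm--Liouville form and the Robin coefficient $H$ at the moving endpoint $t_{n,k}$ change simultaneously, so neither classical comparison principle applies off the shelf.

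The decisive gap is the extremality step. The supremum defining $\Lambda_n$ runs over \emph{all} singular minimizing cones $U \in \cS \cC(\bR^n)$, and no classification of such cones beyond the $O(n-k)\times O(k)$-invariant family is known; you acknowledge this, but it means your symmetrization step has no content --- averaging a stability eigenfunction of a non-symmetric cone $U$ over a group that does not preserve $\{U>0\}$ does not produce an admissible test function for any Rayleigh quotient, and there is no rigidity statement forcing the symmetrized object to be one of the $U_{n,k}$. Your dimension-descent step also does not do what you need: tangent cones at singular points of $U$ away from the origin are lower-dimensional minimizing cones, but their first eigenvalues do not control $\lambda_1(U)$ itself (the quantity $\Lambda_n$ is not monotone under taking cross-products or tangent cones in any established sense for the one-phase problem), so induction on $n$ cannot confine the extremizers to the bi-orthogonal family. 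In short, what you have written is a reasonable research program, with obstacles you correctly identify, but it is not a proof, and the statement remains open both in the paper and after your proposal; the honest intermediate targets are (i) a rigorous proof of the in-family monotonicity via the hypergeometric ODE with Robin condition at $t_{n,k}$, and (ii) any extremality statement within larger invariant classes, as you suggest at the end.
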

Of course, the supremum $\Lambda_n$ is taken over all \textit{minimizing} cones in $\bR^n$, and $U_{n,1}$ was previously the only known example.
Our results in~\cite{FTW-1}*{Theorem 1.4} extend this minimality to all $U_{n,k}$ for $n \geq 20$ (and all but twenty pairs $(n,k)$ for $7 \leq n \leq 19$), therefore making Conjecture~\ref{conj:eigenvalue} well-posed.
Let us denote 
\[
\bar{\Lambda}_n := \lambda_1(n,n-2), \qquad \bar{\gamma}_n = - \tfrac{n-2}{2} + \sqrt{ \bigl( \tfrac{n-2}{2} \bigr)^2 + \bar{\Lambda}_n} \, , 
\]
meaning that we expect $\Lambda_n = \bar{\Lambda}_n$ and $\gamma_n = \bar{\gamma}_n$.
Computing the first few values of $\bar{\gamma}_n$ and $\bar{\Lambda}_n$ for $n \geq 7$ (cf.~Table~\ref{tab:eigenvalues}) indicates that the quantity $\bar{\Lambda}_n$ is strictly decreasing in $n$, the quantity $\bar{\gamma}_n$ is strictly increasing in $n$, and $\bar{\gamma}_n \in (-2,-1)$ for all $n$.
That is, the constants $\bar{\gamma}_n$ (conjecturally equal to $\gamma_n$) are expected to play the same role as the spectral quantity $\alpha_n$ used in establishing the generic regularity of area-minimizing hypersurfaces, cf.~\cite{cmsw-11}*{Lemma 5.3}.

Providing evidence towards these properties, we obtain the following bounds involving the indicial roots $\gamma_{\pm}(n,k)$ of the cones $U_{n,k}$, defined in~\eqref{eqn:fast-slow-decay}, which correspond to the ``rapid'' and ``slow'' decay for one-phase solutions lying on one side of the cone and asymptotic to $U_{n,k}$ at infinity.
\begin{proposition}\label{prop:first-eigenvalue-bound}
    The first eigenvalue $\lambda_1(n,k)$ and decay rates $\gamma_{\pm}(n,k)$ on the cone $U_{n,k}$ satisfy
    \begin{equation}\label{eqn:lambda-1-n-k-bound}
        \lambda_1(n,k) > 8 - 2n \qquad \text{and} \qquad \gamma_-(n,k) \in (2-n,4-n), \quad \gamma_+(n,k) \in (-2,0).
    \end{equation}
    Notably, we have $(4-n,-2) \subset (\gamma_- , \gamma_+)$, meaning that there do not exist Jacobi fields (solutions of the linearized problem~\eqref{eqn:linearized-problem}) on $U_{n,k}$ with decay $\sim \rho^{\sigma}$, for any $\sigma \in (4-n,-2)$.
\end{proposition}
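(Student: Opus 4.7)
The plan is to translate Proposition~\ref{prop:strict-subsolution} into a spectral bound on the link of $U_{n,k}$, then combine it with the strict stability from Theorem~\ref{thm:stability-of-one-phase-cones}. Since the Jacobi operator at the homogeneous cone $U_{n,k}$ is scale-invariant, it separates variables in spherical coordinates: a function of the form $\rho^\alpha \phi(\omega)$ solves the linearized problem if and only if $L_{\mr{link}} \phi = \alpha(\alpha + n - 2)\phi$, where $L_{\mr{link}}$ is the induced Jacobi operator on the link $\{U_{n,k} > 0\} \cap S^{n-1}$ carrying the linearized Bernoulli (Robin-type) condition on its free boundary. By definition, $\gamma_\pm(n,k)$ are the two roots $\alpha$ of $\alpha(\alpha + n - 2) = \lambda_1(n,k)$. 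A direct algebraic check of this quadratic shows that the three inequalities $\lambda_1 > 8 - 2n$, $\gamma_+ > -2$, and $\gamma_- < 4 - n$ are mutually equivalent, while $\lambda_1 < 0$ is equivalent to $\gamma_\pm \in (2-n, 0)$. In view of Theorem~\ref{thm:stability-of-one-phase-cones}, it therefore suffices to establish $\lambda_1(n,k) > 8 - 2n$.

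To do so, I exploit the homogeneity of $V_{n,k} = \rho^{4-n} g_{n,k}(t)$, for which the degree-to-eigenvalue map gives $(4-n)(4-n+n-2) = 8 - 2n$, exactly the threshold in question. Unwinding Proposition~\ref{prop:strict-subsolution} after separation of variables, the statement that $V_{n,k}$ is a subsolution of the linearized equation at $U_{n,k}$ becomes the pointwise differential inequality $L_{\mr{link}}\, g_{n,k} \geq (8-2n)\, g_{n,k}$ on the link, together with the matching Robin-type subsolution inequality on its free boundary. Since $g_{n,k}(t) = {}_2 F_1(1, \tfrac{n-4}{2}; \tfrac{k}{2}; t^2)$ has an all-positive power series for $n \geq 7$ and the link lies within the disk of convergence, $g_{n,k}$ is strictly positive on the link, so it plays the role of a positive super-eigenfunction with compatible boundary data.

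A Barta-type argument then delivers $\lambda_1(n,k) \geq 8 - 2n$: pairing $g_{n,k}$ with the positive first eigenfunction $\phi_1$ of $L_{\mr{link}}$ via Green's identity, the interior and boundary inequalities align in sign so that
\[
\lambda_1(n,k) \int \phi_1 g_{n,k} \;=\; \int \phi_1 L_{\mr{link}} g_{n,k} + \text{bdry terms} \;\geq\; (8-2n) \int \phi_1 g_{n,k} ,
\]
and cancelling the positive integral yields the claimed spectral inequality. Strictness propagates from the strict subsolution inequality obtained in the proof of Proposition~\ref{prop:strict-subsolution} (at some interior point or on the free boundary) to give $\lambda_1 > 8 - 2n$. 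Together with Theorem~\ref{thm:stability-of-one-phase-cones}, this yields~\eqref{eqn:lambda-1-n-k-bound}.

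The non-existence of intermediate-decay Jacobi fields is then automatic: any Jacobi field decomposes into modes $\rho^{\alpha_\pm^{(j)}} \phi_j(\omega)$, where $\phi_j$ is an eigenfunction of $L_{\mr{link}}$ for eigenvalue $\lambda_j \geq \lambda_1$ and $\alpha_\pm^{(j)}$ are the two roots of $\alpha(\alpha + n-2) = \lambda_j$. Since the quadratic $\alpha \mapsto \alpha(\alpha + n-2)$ is minimized at $\alpha = -\tfrac{n-2}{2}$, the pair $\alpha_\pm^{(j)}$ moves monotonically away from this midpoint as $\lambda_j$ grows; every Jacobi exponent therefore lies in $(-\infty, \gamma_-] \cup [\gamma_+, \infty)$, and the inclusion $(4-n, -2) \subset (\gamma_-, \gamma_+)$ from the previous step excludes any Jacobi field with decay $\rho^\sigma$ for $\sigma \in (4-n, -2)$. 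The most delicate step I anticipate is the sign bookkeeping in the Barta argument: one must verify that the linearized Bernoulli condition produces a boundary term in Green's identity with the correct sign so that the subsolution inequality translates into $\lambda_1 \geq 8 - 2n$ and not its reverse.
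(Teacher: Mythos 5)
Your proposal is correct and follows essentially the same route as the paper: both use the positive degree-$(4-n)$ homogeneous subsolution $V_{n,k}$ of Proposition~\ref{prop:strict-subsolution} to place $4-n$ strictly inside the indicial interval $(\gamma_-,\gamma_+)$, translate this into $\lambda_1>8-2n$ via the quadratic relation $\lambda=\alpha(\alpha+n-2)$, obtain the remaining bounds from $\gamma_++\gamma_-=2-n$ together with $\lambda_1<0$, and exclude intermediate-decay Jacobi fields by the mode decomposition over the Robin spectrum. The only difference is that you spell out the Barta-type pairing with the first eigenfunction, which the paper delegates to the general discussion of subsolutions in its preliminaries; your sign bookkeeping is consistent with the paper's computation~\eqref{eqn:sign-boundary-term}, so the argument goes through.
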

Let us mention another important application of our results.
Recently, Engelstein-Restrepo-Zhao~\cite{one-phase-simon-solomon} obtained infinitesimal and asymptotic rigidity results for minimizing one-phase cones that are strictly stable and \textit{strongly integrable through rotations} (see~\cite{one-phase-simon-solomon}*{Definition~2.5}) by analogy with the results of Allard-Almgren and Simon-Solomon for minimal cones~\cite{SimonSolomon}.
For the De~Silva-Jerison cones that are known to be minimizing and strictly stable for $n \geq 7$, due to~\cites{caffarelli-jerison-kenig, desilva-jerison-cones}, they obtain the strong integrability condition required to apply their theory, cf.~\cite{one-phase-simon-solomon}*{\S~2.2}.
Using the strict minimality of the cones $U_{n,k}$ that we obtain in~\cite{FTW-1}*{Theorem 1.4}, an adaptation of the linear spectrum analysis of~\cite{one-phase-simon-solomon}*{\S~2.2} following our Proposition~\ref{prop:spectral-analysis} would then imply the rigidity result~\cite{one-phase-simon-solomon}*{Theorem 1.3} for all such cones.

\subsection{Applications to other problems}

The results of Theorem~\ref{thm:stability-of-one-phase-cones} and Proposition~\ref{prop:strict-subsolution} have important applications to a range of related free boundary problems in geometry and PDE.

\smallskip \noindent \textbf{The capillary problem.}
Given a function $u : \bR^n \to \bR$, the $\theta$-\textit{capillary energy} of the epigraph $E_+(u) = \{ (x', x_{n+1}) : x_{n+1} > u(x') \} \subset \bR^{n+1}_+$ is given by
\[
\cA^{\theta} (u) = \int_{ \{ u > 0 \} \subset \bR^n } \Bigl( \sqrt{1 + |\nabla u|^2} - \cos \theta \, \chi_{ \{ u > 0 \} } \Bigr) \, dx'.
\]
The functional $\cA^{\theta}$ is a modification of the usual area functional, defined over the positive phase of $u$, with a penalty term determined by the angle $\theta$.
The critical points of $\cA^{\theta}$ produce \textit{capillary minimal graphs}.
A standard computation (see, for example,~\cite{improved-regularity}*{(1.2) and Lemma~4.7}) produces
    \begin{equation}\label{eqn:capillary-one-phase}
        \cA^{\theta}(u) = \frac{1}{2} \tan^2 \theta \int_{ \{ u_i >0 \} } \bigl( |\nabla \tilde{u}|^2 + \chi_{ \{ \tilde{u} > 0 \} } + O(\theta) \bigr) \, dx', \qquad \tilde{u} := \frac{1}{\tan \theta} u.
    \end{equation}
Consequently, the functionals $\cA^{\theta}(\cdot \frac{1}{\tan \theta})$ $\Gamma$-converge to the Alt-Caffarelli functional $\cJ$ as $\theta \downarrow 0$, allowing us to extract a minimizing solution of the one-phase problem as a subsequential limit of $\cA^{\theta_i}$-minimizers for $\theta_i \downarrow 0$.
Conversely, a sequence of capillary minimal graphs sufficiently $C^1$-close to a strictly stable solution will also be strictly stable, for $\theta$ sufficiently small.
This approach is taken in~\cite{FTW-1}, where $O(n-k) \times O(k)$-invariant capillary minimal cones are constructed for any angle $\theta$, converging (after rescaling) to the solution $U_{n,k}$ of Theorem~\ref{thm:stability-of-one-phase-cones} as $\theta \downarrow 0$.
Our results then imply that for small $\theta$, these solutions are also unstable when $n \leq 6$ and stable when $n \geq 7$.

\smallskip \noindent \textbf{The fractional one-phase problem.}
The thin one-phase problem is a generalization of the one-phase problem to fractional exponents $s \in (0,1)$.
The associated functional is formed by replacing the Dirichlet energy in~\eqref{eqn:alt-caffarelli} by the $H^s$-fractional seminorm of order $s \in (0,1)$, forming
\begin{equation}\label{eqn:thin-one-phase-functional}
    \cT_s(u, B) := [u]^2_{H^s(B)} + | \{ u > 0 \} \cap B |.
\end{equation}
The fractional seminorm $[v]^2_{H^s(\bR^n)}$ accounts for long-range interactions and is given by
\[
    [v]^2_{H^s(\bR^n)} := \frac{c_{n,s}}{2} \iint_{\bR^n \times \bR^n} \frac{(v(x) - v(y))^2}{|x-y|^{n+2s}} \, dx \, dy, \qquad \text{where } \; c_{n,s} = \frac{s 2^{2s} \Gamma ( \frac{n+2s}{2})}{\pi^{\frac{n}{2}} \Gamma(1-s)},
\]
where the constant $c_{n,s}$ comes from the definition of the $s$-fractional Laplacian
\[
(-\Delta)^s u(x) = c_{n,s} \, \text{P.V.} \int_{\bR^n} \frac{u(x) - u(y)}{|x-y|^{n+2s}} \, dy \, .
\]
The classical thin one-phase free boundary problem is obtained by considering $s = \frac{1}{2}$.
A variational argument (see, for example,~\cites{fractional-laplacian-problems, stable-thin-one-phase}) shows that a local minimizer (critical point) $u$ of $\cT_s$ solves the following problem on the domain $\Omega = \{ u > 0 \}$ (assumed to be sufficiently smooth), involving a condition on the fractional derivative of $u$ and $d(x) := \text{dist}(x,\partial \Omega)$:
\begin{equation}\label{eqn:thin-problem-conditions}
\left\{
\begin{aligned}
(-\Delta)^s u &= 0 && \text{in } \Omega \cap B, \\
u &= 0 && \text{in } \Omega^c \cap B, \\
\Gamma(1+s)\,\frac{u}{d^s} &= 1 && \text{on } \partial\Omega \cap B.
\end{aligned}
\right.
\end{equation}
As the fractional parameter $s$ increases to $1$, the thin one-phase problem $\Gamma$-converges to the standard Bernoulli problem.
Fern\'andez-Real and Ros-Oton~\cite{stable-thin-one-phase} formulated a stability criterion for the thin one-phase problem which, in the case of solutions with cohomogeneity one (in particular, with bi-orthogonal symmetry $O(n-k) \times O(k)$ as the ones examined here) reduces to a one-dimensional inequality.
The resulting condition is related to the Mellin transform operator associated with the Dirichlet-to-Neumann map of an appropriate radial function, generalizing the approach of Caffarelli-Jerison-Kenig~\cites{cjk-2}.
Moreover, they proved the instability of axisymmetric free boundaries in dimension $d \leq 5$ for all $s \in (0,1)$.
As $s \uparrow 1$, the stability inequality of~\cite{stable-thin-one-phase} recovers the criterion of Caffarelli-Jerison-Kenig.

\smallskip \noindent \textbf{The Alt-Phillips problem.}
Generalizing the Alt-Caffarelli functional, the \textit{one-phase Alt-Phillips} free boundary problem concerns the study of non-negative minimizers of the functional
\[
\cJ_{\gamma}(u) := \int_{B_1} |\nabla u|^2 + u^{\gamma} \chi_{ \{ u > 0 \} }, \qquad \text{where } \; \gamma \in (-2,2)
\]
among functions with non-negative boundary datum on $\partial B_1$, cf.~\cite{alt-phillips}.
For $\gamma = 0$, one recovers the standard one-phase problem, whereas $\gamma = 1$ corresponds to the \textit{obstacle problem}.
Recently, De~Silva-Savin studied the Alt-Phillips problem for negative exponents $\gamma$ and proved that the functional $\cJ_{\gamma}(u)$ $\Gamma$-converges to the Alt-Caffarelli energy, for $\gamma \uparrow 0$, and to the perimeter functional for $\gamma \downarrow -2$.
For this problem, the stability condition was obtained by Karakhanyan-Sanz-Perela, in the case of positive exponent $\gamma>0$, and by Carducci-Tortone, for negative exponents~\cites{stable-cones-alt-phillips, carducci-tortone}.
For this problem, the fractional exponent introduces a bulk term in the stability inequality, making an argument in the style of Jerison-Savin~\cite{jerison-savin} elusive.
However, sending $\gamma \downarrow 0$ (for positive $\gamma$) again reduces stability to the Caffarelli-Jerison-Kenig condition.

The problems discussed above depend continuously on a parameter and approximate the one-phase problem.
The operators corresponding to the capillary problem with respect to the angle $\theta$, the fractional one-phase problem with exponent $s$, and the Alt-Phillips problem with parameter $\gamma$ all have invertible linearizations at the value $\theta = 0$ (resp.~$s=1$ or $\gamma=0$) corresponding to the one-phase solution $U_{n,k}$.
Consequently, for some $\ve_n >0$ one may produce $O(n-k) \times O(k)$-invariant solutions $U_{n,k,\theta}$ of the capillary problem for angle $\theta \in (0,\ve_n)$ (resp.~$U_{n,k,s}$ for fractional one-phase exponents $s \in (1-\ve_n,1)$, or $U_{n,k,\gamma}$ for Alt-Phillips exponents $\gamma \in (-\ve_n, \ve_n)$).
We refer the reader to~\cite{FTW-1}*{Theorem 1.2}, where the existence and uniqueness of such solutions is obtained in greater generality.
Using Theorem~\ref{thm:stability-of-one-phase-cones} and the above discussion, one could then deduce that the solutions $U_{n,k,\theta}$ (resp.~$U_{n,k,s}$ and $U_{n,k,\gamma}$) are unstable for $n \leq 6$ and strictly stable for $n \geq 7$.
These connections will be addressed in greater detail in upcoming work on the capillary problem and related questions, see~\cite{FTW_Stability}.

\section{Preliminaries}
We collect here some important properties of stable critical points of the functional $\cJ$, which we will use to prove Theorem~\ref{thm:stability-of-one-phase-cones}.
We will also introduce some properties of hypergeometric functions that are important for studying the solutions $U_{n,k}$.

\subsection{Stability of solutions}\label{subsection:stability-of-solutions}

A critical point $U$ of $\cJ$ is called \textit{stable}, if the second variation of the functional $\cJ$ at $U$ is non-negative on every annulus $B^n_{r_2} \setminus \bar{B}^n_{r_1}$ for $0 < r_1 < r_2$.
The positivity of the second variation with respect to inner perturbations in an annulus $B^n_{r_2} \setminus \bar{B}^n_{r_1}$ was computed in~\cite{caffarelli-jerison-kenig}, and amounts to
\[
\int_{B^n_{r_2} \setminus \bar{B}^n_{r_1}} |\nabla \phi|^2 \geq \int_{ \partial ( B^n_{r_2} \setminus \bar{B}^n_{r_1} )} H \phi^2, \qquad \text{for any } \; \phi \in C^2(B^n_{r_2} \setminus \bar{B}^n_{r_1}).
\]
where $H$ denotes the mean curvature of the free boundary $\partial \{ U > 0 \}$ with respect to the outward-pointing normal vector.
For the one-phase problem, it is well-known that $H>0$ when $U$ is not the half-space solution, meaning that the free boundary is strictly mean-convex.

The stability of a homogeneous solution $U$ can be recast as a variational problem on the spherical link of the positive phase, i.e., the spherical domain $\Omega_S := \{ U > 0 \} \cap \bS^{n-1}$.
The first eigenvalue is computed via the minimization
\begin{equation}\label{eqn:Lambda-variational}
\lambda_1 := \inf_v \frac{\int_{\Omega_S} |\nabla v|^2 - \int_{\partial \Omega_S} H v^2}{\int_{\Omega_S} v^2}
\end{equation}
and the solution $U$ is stable (in any bounded domain that avoids the origin) if and only if $\lambda_1 \geq - ( \frac{n-2}{2})^2$.
We say that $U$ is \textit{strictly stable} if this inequality is strict, meaning that $\lambda_1 > - ( \frac{n-2}{2})^2$.
Since $H>0$, testing~\eqref{eqn:Lambda-variational} with $v=1$ shows that $\lambda_1 < 0$.

The stability of the solution $U$ is equivalent to the existence of a subsolution for the linearization of the one-phase problem at $U$.
The linearized problem at the solution $U$ assumes the form
\begin{equation}\label{eqn:linearized-problem}
\begin{cases}
    \Delta \varphi = 0 & \text{in } \; \{ U > 0 \}, \\
    \partial_{\nu} \varphi + H \varphi = 0 & \text{on } \; \partial \{ U > 0 \}.
\end{cases}
\end{equation}
where $\nu$ denotes the normal vector to $\partial \{ U > 0 \}$ pointing into $\{ U > 0 \}$.
Accordingly, the cone $U$ is \textit{strictly stable} if there exists a strict subsolution to the above problem.
We refer the reader to~\cites{jerison-savin, hardt-simon-DSJ, one-phase-simon-solomon} for a detailed discussion of (strict) stability properties and their implications.

Since $\Omega = \{ U > 0 \}$ is a cone, applying a separation of variables allows us to study solutions of~\eqref{eqn:linearized-problem} having the form $\varphi = \rho^{\alpha} g(\omega)$, for a function $g$ defines on the spherical link $\Omega_S$.
We decompose the Euclidean gradient in polar coordinates as $\nabla = \omega \partial_{\rho} + \rho^{-1} \nabla^{\bS^{n-1}}$ and observe that
\[
\partial_{\rho} \varphi = \alpha \rho^{\alpha - 1} g(\omega), \qquad \nabla^{\bS^{n-1}} \varphi = \rho^{\alpha} \nabla^{\bS^{n-1}} g(\omega).
\]
To compute the boundary term, note that $U$ is a homogeneous solution and $\{ U > 0 \}$ is a cone, so the free boundary $\partial \{ U > 0 \}$ contains the radial direction $\omega$ in its tangent space.
Therefore, the unit normal $\nu$ is orthogonal to $\omega$, meaning that $\la \omega, \nu \rg = 0$ on $\partial \{ U > 0 \} \setminus \{ 0 \}$.
Since $|\nabla U| = 1$ for the one-phase solution and $\nabla U$ is normal to the level set $\partial \{ U > 0 \}$, we have $\nabla U = \nu$ along the free boundary.
We therefore obtain
\begin{equation}\label{eqn:normal-derivative-angular}
    \partial_{\nu} \varphi = \la \nabla \varphi , \nu \rg = \partial_{\rho} \varphi \la \omega, \nu \rg + \rho^{-1} \la \nabla^{\bS^{n-1}} \varphi , \nu \rg =   \rho^{\alpha - 1} \la \nabla^{\bS^{n-1}} g(\omega), \nabla U \rg .
\end{equation}
The solutions of~\eqref{eqn:linearized-problem} therefore correspond to eigenfunctions of the spectral problem
\begin{equation}\label{eqn:Robin-problem}
    \begin{cases}
        - \Delta_{\bS^{n-1}} g = \lambda g & \text{in } \; \Omega_S, \\
        \la \nabla^{\bS^{n-1}} g, \nu \rg + H g = 0 & \text{on } \; \partial \Omega_S,
    \end{cases}
\end{equation}
where the eigenvalue $\lambda$ is related to the homogeneity degree $\alpha$ by 
\begin{equation}\label{eqn:degree-of-homogeneity}
    \alpha = - \frac{n-2}{2} \pm \sqrt{ \Bigl( \frac{n-2}{2} \Bigr)^2 + \lambda} \, .
\end{equation}
The solutions of~\eqref{eqn:Robin-problem} are eigenfunctions of the spherical Laplacian on $\Omega_S$ with Robin boundary condition.
This operator has discrete spectrum, given by the eigenfunctions
\[
\lambda_1 < \lambda_2 \leq \cdots \to + \infty \, .
\]
We may therefore investigate the stability of a solution $U$ by taking $\varphi = \rho^{\alpha} g(\omega)$ to be a harmonic function with $\alpha = \frac{2-n}{2}$ and examining the resulting boundary term of~\eqref{eqn:degree-of-homogeneity}.
The homogeneous solution $U$ is unstable, (borderline) stable, or strictly stable if the resulting boundary term of~\eqref{eqn:degree-of-homogeneity} is strictly positive, zero, or strictly negative, respectively.
This reformulation of the stability condition was used in~\cite{jerison-savin}*{Proposition 2.2} and~\cite{caffarelli-jerison-kenig}*{\S~3}.

When the homogeneous solution $U$ is a strictly stable global minimizer, De Silva, Jerison, and Shahgholian \cite{hardt-simon-DSJ}*{Theorem 1} proved the existence of a unique pair $\bar{U}, \underbar{U}$ of global minimizers with analytic free boundaries at distance $1$ from the origin, such that $\underbar{U} \leq U \leq \bar{U}$ and the scalings of the two solutions foliate $\bR^{n+1}_+ = \bR^n \times [0,\infty)$.
As in~\eqref{eqn:degree-of-homogeneity}, we define
\begin{equation}\label{eqn:fast-slow-decay}
 \gamma_{\pm} :=  - \frac{n-2}{2} \pm \sqrt{ \Bigl( \frac{n-2}{2} \Bigr)^2 + \lambda_1}   
\end{equation}
for $\lambda_1$ the first eigenvalue of the cone $U$; then, for $p \in  \bR^n \setminus B_R(0)$ and $\rho := |p|$ we may write
\begin{equation}\label{eqn:underbar-overbar}
\underbar{U} = U - \underbar{a} \rho^{\gamma_{\pm}} g_1(\omega) + O ( \rho^{\gamma_{\pm} - \alpha'}), \qquad \bar{U} = U + \bar{a} \rho^{\gamma_{\pm}} g_1(\omega) + O ( \rho^{\gamma_{\pm} - \alpha'}) 
\end{equation}
for $g_1$ the first eigenfunction and universal constants $R,\underbar{a}, \bar{a}, \alpha'$.

In terms of the dimensional quantities $\gamma_n$ and $\Lambda_n$ introduced in Section~\ref{eqn:gamma-n-definition}, we see that the ``slow'' indicial root (homogeneity) $\gamma_+$ gives rise to $\gamma_n$ via
\begin{equation}\label{eqn:gamma-n-sup}
    \gamma_n = \sup_{U \in \cS \cC(\bR^n)} \gamma_+
\end{equation}
so $\gamma_n$ arises as the supremum of the ``slow'' decay rates over all non-flat minimizing cones in $\bR^n$.

\subsection{\texorpdfstring{$O(n-k)\times O(k)$}{O(n-k)xO(k)}-invariant one-phase cones}

We study solutions of the one-phase problem in $\bR^n$ having $O(n-k) \times O(k)$-symmetry.
Let us work in Euclidean space $\bR^n = \bR^{n-k} \times \bR^k$ with coordinates $(x,y) \in \bR^{n-k} \times \bR^k$.
We define
\[
\rho := \sqrt{ |x|^2 + |y|^2}, \qquad t = \frac{|y|}{\rho} \in [0,1].
\]
\begin{lemma}\label{lemma:alpha-homogeneous}
    Given a real number $\alpha$, any $O(n-k) \times O(k)$-invariant, $\alpha$-homogeneous harmonic function in a wedge of $\bR^n$ is given by
    \[
    U_{\alpha}(x,y) = c \, \rho^{\alpha} g_{n,k,\alpha}(t), \qquad \text{for some } \; c \in \bR,
    \]
    where we define 
    \begin{equation}\label{eqn:general-hypergeometric-expression}
g_{n,k,\alpha}(t) = {}_2 F_1 \left( \frac{n + \alpha-2}{2} ,  - \frac{\alpha}{2}; \frac{k}{2} ; t^2 \right).
\end{equation}
\end{lemma}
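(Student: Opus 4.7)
The plan is a direct separation of variables followed by identification of a hypergeometric ODE. By $O(n-k) \times O(k)$-invariance, $U$ depends only on $r = |x|$ and $s = |y|$, and by $\alpha$-homogeneity, $U(x,y) = \rho^{\alpha} h(t)$ for a single-variable function $h$, where $t = s/\rho \in [0,1]$. The Euclidean Laplacian, restricted to bi-axially symmetric functions, assumes the form
\[
\Delta = \partial_r^2 + \frac{n-k-1}{r} \partial_r + \partial_s^2 + \frac{k-1}{s} \partial_s.
\]
Passing to polar coordinates $r = \rho \cos\theta$, $s = \rho \sin\theta$ in the $(r,s)$-plane, so that $t = \sin\theta$, and substituting $U = \rho^{\alpha} h(\theta)$, a standard calculation reduces $\Delta U = 0$ to the ODE
\[
h''(\theta) + \bigl[(k-1)\cot\theta - (n-k-1)\tan\theta\bigr]\, h'(\theta) + \alpha(\alpha + n - 2)\, h(\theta) = 0.
\]

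Next, I would change variables to $z = t^2 = \sin^2\theta$, writing $H(z) = h(\theta)$. A direct computation transforms the ODE into
\[
z(1-z)\, H''(z) + \bigl[ \tfrac{k}{2} - \tfrac{n}{2}\, z \bigr] H'(z) + \tfrac{1}{4}\alpha(\alpha + n - 2)\, H(z) = 0,
\]
which is the hypergeometric equation in standard form with parameters $c = k/2$, $a+b = (n-2)/2$, and $ab = -\alpha(\alpha+n-2)/4$. Solving the resulting quadratic $x^2 - \tfrac{n-2}{2}\, x - \tfrac{\alpha(\alpha+n-2)}{4} = 0$ and noting that the discriminant is a perfect square $\bigl(\alpha + \tfrac{n-2}{2}\bigr)^2$, one recovers the pair $\{a, b\} = \{ \tfrac{n+\alpha-2}{2},\, -\tfrac{\alpha}{2} \}$, exactly matching the parameters in the statement.

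Finally, I would invoke regularity to pin down the solution up to a multiplicative constant. The hypergeometric ODE has indicial roots $0$ and $1 - c = 1 - k/2$ at the regular singular point $z = 0$, so one linearly independent solution is analytic there and the other is of the form $z^{1-k/2}$ times an analytic factor (with a logarithmic term when $k = 2$). Since the axis $\{y = 0\} = \{t = 0\}$ lies in the wedge and $U$ must be smooth along it, $H$ is forced to be regular at $z = 0$, yielding $H(z) = c \cdot {}_2F_1 \bigl( \tfrac{n+\alpha-2}{2},\, -\tfrac{\alpha}{2};\, \tfrac{k}{2};\, z \bigr)$ for some $c \in \bR$. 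The main obstacle is simply the bookkeeping in reducing to hypergeometric form—keeping track of the factors of $2$ in the coefficients and identifying the quadratic whose roots are $a$ and $b$—but this is routine; the conceptual content is entirely in the separation of variables together with a Frobenius analysis at the pole.
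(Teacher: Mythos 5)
Your proposal is correct and follows essentially the same route as the paper: separate variables as $U=\rho^{\alpha}h(t)$, reduce $\Delta U=0$ to a second-order ODE in the angular variable, recognize it (after the substitution $z=t^2$) as the hypergeometric equation with $c=\tfrac{k}{2}$, $a+b=\tfrac{n-2}{2}$, $ab=-\tfrac{\alpha(\alpha+n-2)}{4}$, and select the solution regular across the axis $\{t=0\}$. The only cosmetic difference is that the paper phrases the selection step as taking the unique even solution of the $t$-ODE, whereas you argue via the Frobenius exponents $0$ and $1-\tfrac{k}{2}$ at $z=0$ together with the smoothness of harmonic functions across $\{y=0\}$; both yield the same conclusion.
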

\begin{proof}
    Let us write $U_{\alpha} = u(\rho, t)$, where $u(\rho,t) = \rho^{\alpha} g(t)$ is an $\alpha$-homogeneous function.
    Note that $g(t)$ must be an even function of $t$ in order for $U(x,y)$ to be well-defined in $x,y$.
    A standard computation shows that $\la \nabla \rho, \nabla t \rg = 0$ with
    \[
    |\nabla \rho|^2 = 1, \qquad |\nabla t|^2 = \frac{1-t^2}{\rho^2}, \qquad
    \Delta \rho = \frac{n-1}{\rho}, \qquad \Delta t = \frac{(k-1) t^{-1} - (n-1) t}{\rho^2}
    \]
    We therefore obtain, for any function $U(x,y) = u(\rho, t)$,
    \begin{equation}\label{eqn:DeltaU-computation}
        \begin{split}
             \Delta U &= u_{\rho \rho} |\nabla \rho|^2 + 2 u_{\rho t} \la \nabla \rho, \nabla t \rg + u_{tt} |\nabla t|^2 + u_{\rho} \Delta {\rho} + u_t \Delta t \\
    &= u_{\rho \rho} + \frac{n-1}{\rho} u_{\rho} + \frac{1-t^2}{\rho^2} u_{tt} + \frac{(k-1) t^{-1} - (n-1) t}{\rho^2} u_t.
        \end{split}
    \end{equation}
    Taking $u(\rho,t) = \rho^{\alpha} g(t)$, where $g(0) = 1$ up to scaling, we compute that
    \begin{equation}\label{eqn:laplace-rho-a-g}
    \Delta (\rho^{\alpha} g) = \rho^{\alpha-2} \left( (1-t^2) g'' + \bigl( (k-1) t^{-1} - (n-1) t \bigr) g' + \alpha (\alpha + n-2) g \right).
\end{equation}
The unique even solution of the ODE $\rho^{2-\alpha} \Delta(\rho^{\alpha} g) = 0$
from \eqref{eqn:laplace-rho-a-g} with $g(0)=1$ is given by~\eqref{eqn:general-hypergeometric-expression}.
\end{proof}

\begin{lemma}\label{lemma:homogeneous-solution-fn,k}
Consider the hypergeometric function $f_{n,k}(t) := {}_2 F_1 ( \frac{n-1}{2} , - \frac{1}{2} ; \frac{k}{2} ; t^2)$, with a zero at $t_{n,k} > 0$.
The unique $O(n-k) \times O(k)$-invariant homogeneous solution of the one-phase problem in $\bR^n$ is given by
\begin{equation}\label{eqn:one-phase-solution}
    U_{n,k}(x,y) = c_{n,k} \, \rho \, f_{n,k}(t), \qquad \text{where } \; c_{n,k} := \frac{1}{\sqrt{1 - t_{n,k}^2} \, |f'_{n,k}(t_{n,k})|} \, .
\end{equation}
\end{lemma}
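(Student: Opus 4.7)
The strategy is to reduce to Lemma~\ref{lemma:alpha-homogeneous} by fixing the homogeneity degree, then identify the free boundary and pin down the normalization constant via the Bernoulli condition. First, if $U$ is an $\alpha$-homogeneous solution of the one-phase problem, then $|\nabla U|$ is $(\alpha-1)$-homogeneous; since the free boundary $\partial \{U > 0\}$ is a cone (invariant under dilations) while the Bernoulli condition forces $|\nabla U| \equiv 1$ there, the only admissible degree is $\alpha = 1$. Applying Lemma~\ref{lemma:alpha-homogeneous} with $\alpha = 1$, and observing that $g_{n,k,1}(t) = f_{n,k}(t)$, any such $U$ takes the form $U(x,y) = c \, \rho \, f_{n,k}(t)$ on its positive phase.

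To identify the free boundary, I study the sign behavior of $f_{n,k}$. Writing $f_{n,k}(t) = \sum_{m \geq 0} a_m t^{2m}$, the Pochhammer symbol $(-\tfrac{1}{2})_m$ is negative for every $m \geq 1$ while $(\tfrac{n-1}{2})_m$ and $(\tfrac{k}{2})_m$ are positive, so each $a_m$ with $m \geq 1$ is negative. Consequently $f_{n,k}$ is strictly decreasing on $[0,1)$ with $f_{n,k}(0) = 1$. Moreover, since $\tfrac{k}{2} - \tfrac{n-1}{2} + \tfrac{1}{2} = \tfrac{k - n + 2}{2} \leq 0$ when $k \leq n - 2$, standard asymptotics of ${}_2F_1$ at the boundary of the unit disk (combined with Abel summation applied to the uniformly negative tail) imply $f_{n,k}(t) \to -\infty$ as $t \to 1^-$. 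Hence $f_{n,k}$ admits a unique zero $t_{n,k} \in (0,1)$, and the only admissible positive phase is $\{t < t_{n,k}\}$ (with $c > 0$); the alternative $\{t > t_{n,k}\}$ is ruled out because $f_{n,k}$ blows up, rather than vanishes continuously, as $t \to 1^-$.

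Finally, on the free boundary $\{t = t_{n,k}\}$ we have $f_{n,k}(t_{n,k}) = 0$, so using $\la \nabla \rho, \nabla t \rg = 0$, $|\nabla \rho|^2 = 1$, and $|\nabla t|^2 = (1 - t^2)/\rho^2$ from the proof of Lemma~\ref{lemma:alpha-homogeneous}, the Bernoulli condition $|\nabla U| = 1$ simplifies to
\[
c^2 (1 - t_{n,k}^2) \, f'_{n,k}(t_{n,k})^2 = 1.
\]
Combined with $c > 0$, this yields the claimed formula $c = c_{n,k}$.

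The main obstacle is controlling the boundary behavior of $f_{n,k}$ at $t = 1$, since the precise asymptotics depend on the sign of $c - a - b$ in the hypergeometric parameters and the borderline case $k = n - 2$ (where this quantity vanishes) would require a logarithmic correction via connection formulas. The series-based monotonicity argument outlined above handles all cases $1 \leq k \leq n - 2$ uniformly, bypassing such delicate asymptotics by exploiting the uniform sign of the coefficients.
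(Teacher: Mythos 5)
Your proposal is correct and follows essentially the same route as the paper: specialize Lemma~\ref{lemma:alpha-homogeneous} to $\alpha=1$ to get $U = c\,\rho\,f_{n,k}(t)$, then impose $|\nabla U|=1$ at $\{t=t_{n,k}\}$, where $f_{n,k}$ vanishes, to obtain $c^2(1-t_{n,k}^2)f_{n,k}'(t_{n,k})^2=1$. Your additional justifications—that scale invariance of the Bernoulli condition forces degree $1$, and that the negativity of the Taylor coefficients $a_m$ for $m\geq 1$ together with divergence of the series at $t=1$ (since $c-a-b\leq 0$) gives a unique zero $t_{n,k}\in(0,1)$ and identifies the positive phase as $\{t<t_{n,k}\}$—are correct details the paper leaves implicit.
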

\begin{proof}
    The computation of Lemma~\ref{lemma:alpha-homogeneous} specialized to $\alpha=1$ shows that the unique $O(n-k) \times O(k)$-invariant homogeneous harmonic function in $\bR^n$ has the form $c \,\rho f_{n,k}(t)$, where
    \[
    f_{n,k} (t) = g_{n,k,1} (t) = {}_2 F_1 \Bigl( \frac{n-1}{2} , - \frac{1}{2} ; \frac{k}{2} ; t^2 \Bigr)
    \]
    by using~\eqref{eqn:general-hypergeometric-expression}.
    To determine the constant $c$, we observe that
    \[
    |\nabla U_{n,k}|^2 = (\partial_{\rho} U_{n,k})^2 + \frac{1-t^2}{\rho^2} ( \partial_t U_{n,k})^2 = c_{n,k}^2 \bigl[ f_{n,k}(t)^2 + (1-t^2) f'_{n,k}(t)^2 \bigr].
    \]
    Along the free boundary $\partial \{ U_{n,k}> 0 \} = \{ \pm t_{n,k} \}$, the condition $|\nabla u| = 1$ from~\eqref{eqn:one-phase-problem} forces
    \[
    c_{n,k}^2 (1- t_{n,k}^2) f'_{n,k}(t_{n,k})^2 = 1
    \]
    which proves the equality~\eqref{eqn:one-phase-solution}.
    
\end{proof}
The case $k=1$ corresponds to the axisymmetric cones studied by De~Silva and Jerison~\cite{desilva-jerison-cones}.
The hypergeometric function $f_{n,k}(t)$ is the solution of $\cL_{n,k}f = 0$ for the operator
\begin{equation}\label{eqn:legendre-form}
\begin{split}
\cL_{n,k} f &= (1-t^2) f'' + (n-1) (f - t f') + (k-1) t^{-1} f' \\
&= \frac{1-t^2}{p(t)} \left[ (p f')' + (n-1) \frac{p}{1-t^2} f \right], \qquad \text{where } \; p(t) := t^{k-1}(1-t^2)^{\frac{n-k}{2}} \, .
\end{split}
\end{equation}
Using standard properties of hypergeometric functions, we observe that the function $g_{n,k,\alpha}$ of~\eqref{eqn:general-hypergeometric-expression} has a zero $t_{n,k,\alpha} \in (0,1)$ if and only if $\alpha \not\in [2-n,0]$, otherwise remains strictly positive on $[-1,1]$.
In fact, $\alpha \in (1-n,1)$ makes $g_{n,k,\alpha}(t) > 0$ in the interval $[-t_{n,k}, t_{n,k}]$ where $f$ is positive.
In the limit, $|t_{\alpha}| \uparrow 1$ as $\alpha \to 0^{\pm}$ or $\alpha \to (2-n)^{\pm}$, and $|t_{\alpha}| \downarrow 0$ as $|\alpha| \to \infty$.
Observe that such a solution of the problem~\eqref{eqn:Robin-problem} exists if and only if it corresponds to the first eigenvalue and eigenfunction, by Courant's nodal domain theorem.

In our situation, the cone $U_{n,k}$ has cohomogeneity one, therefore the spectral problem~\eqref{eqn:Robin-problem} can be reformulated as a simple Sturm-Liouville problem.
This will allow us to test the stability of cones by a simple adaptation of the Caffarelli-Jerison-Kenig criterion from~\cite{cjk-2}*{(12), (12)'}.

\begin{proposition}\label{prop:spectral-analysis}
    Consider the cone $U_{n,k}$ of Theorem~\ref{thm:stability-of-one-phase-cones}.
    A number $\lambda \in \bR$ is an eigenvalue of the linearized problem~\eqref{eqn:Robin-problem} if and only if for some $(p,q) \in \bN_0 \times \bN_0$, the even solution of the ODE
    \begin{equation}\label{eqn:eigenvalue-condition}
        (1-t^2)\Phi'' + \left(\frac{k-1}{t} - (n-1)t\right)\Phi' + \left(\lambda -\frac{p(p + n-k-2)}{1-t^2} - \frac{q(q+k-2)}{t^2}\right)\Phi = 0
    \end{equation}
    normalized by $\Phi(0) = 1$ for $q = 0$ or $\lim_{t \to 0} t^{-q} \Phi(t) = 1$ when $q > 0$ and satisfies
    \begin{equation}\label{eqn:Phi-boundary-condition}
        \frac{\Phi'(t_{n,k})}{\Phi(t_{n,k})} = \frac{(n-2)t_{n,k} - (k-1)t_{n,k}^{-1}}{1-t^2_{n,k}}.
    \end{equation}
\end{proposition}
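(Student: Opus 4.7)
The plan is to separate variables in \eqref{eqn:Robin-problem} using the bi-spherical decomposition of $\bS^{n-1}$ adapted to the $O(n-k) \times O(k)$-action. I would parameterize $\omega \in \bS^{n-1}$ as $\omega = (\sqrt{1-t^2}\,\omega_x,\, t\,\omega_y)$ with $t = \sin\theta \in [0,1]$, $\omega_x \in \bS^{n-k-1}$, $\omega_y \in \bS^{k-1}$, so that the round metric is the warped product $d\theta^2 + \cos^2\theta\, g_{\bS^{n-k-1}} + \sin^2\theta\, g_{\bS^{k-1}}$, the free boundary $\partial\Omega_S$ is exactly $\{t = t_{n,k}\}$, and the variable $t$ agrees with $|y|/\rho$ as used throughout Section~2.2. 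Since $-\Delta_{\bS^{n-1}}$ commutes with the isometric $O(n-k) \times O(k)$-action, one can restrict to joint eigenfunctions of the form $g = \Phi(t)\,Y_p(\omega_x)\,Z_q(\omega_y)$, where $Y_p, Z_q$ are spherical harmonics of degrees $p, q \in \bN_0$ satisfying $-\Delta_{\bS^{n-k-1}} Y_p = p(p+n-k-2)\,Y_p$ and $-\Delta_{\bS^{k-1}} Z_q = q(q+k-2)\,Z_q$.

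A direct computation of the Laplacian of this warped product (or equivalently specializing \eqref{eqn:DeltaU-computation}, using $\la \nabla\rho,\nabla t \rg = 0$ and $|\nabla t|^2 = (1-t^2)/\rho^2$) gives
\[
\Delta_{\bS^{n-1}} g = Y_p Z_q \left[ (1-t^2)\,\Phi'' + \left(\tfrac{k-1}{t} - (n-1)t\right)\Phi' \right] - \tfrac{p(p+n-k-2)}{1-t^2}\, g - \tfrac{q(q+k-2)}{t^2}\, g,
\]
so that $-\Delta_{\bS^{n-1}} g = \lambda g$ reduces precisely to \eqref{eqn:eigenvalue-condition}. For the Robin condition, I would observe that the Euclidean mean curvature of the Lawson-type free boundary $\{|y| = \tau |x|\}$, with $\tau = t_{n,k}/\sqrt{1-t_{n,k}^2}$, is of the form $H = H_0/\rho$. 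Substituting $\varphi = \rho^\alpha g$ in \eqref{eqn:linearized-problem} and combining with \eqref{eqn:normal-derivative-angular} reduces the Robin condition to the scale-invariant angular equation $\la \nabla^{\bS^{n-1}} g, \nu \rg + H_0 g = 0$ at $\{t = t_{n,k}\}$. The inward unit normal on the sphere is $-\partial_\theta$, giving $\la \nabla^{\bS^{n-1}} g, \nu\rg = -\sqrt{1-t_{n,k}^2}\,\Phi'(t_{n,k})\,Y_p Z_q$; and $H_0$ is computed from the level set formula applied to $F(x,y) = |y|^2 - \tau^2|x|^2$, together with $\on{div}(x/|x|) = (n-k-1)/|x|$ and $\on{div}(y/|y|) = (k-1)/|y|$, to be $H_0 = [(n-2)\,t_{n,k} - (k-1)\,t_{n,k}^{-1}]/\sqrt{1-t_{n,k}^2}$. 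Combining these yields \eqref{eqn:Phi-boundary-condition}.

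Finally, the normalization at $t = 0$ corresponds to the smoothness of $\Phi(t)\,Z_q(y/|y|)$ across the axis $\{|y|=0\}$. The ODE \eqref{eqn:eigenvalue-condition} has a regular singular point at $t = 0$ with indicial exponents $\{q,\, 2-k-q\}$; smoothness forces the branch $\Phi \sim t^q$, and the substitution $\Phi = t^q \Psi$ converts \eqref{eqn:eigenvalue-condition} into an ODE for $\Psi$ whose coefficients are analytic in $t^2$, so that $\Psi$ is the unique (up to scaling) even analytic solution -- this is the ``even'' solution of the proposition, and the stated normalization fixes it. The converse direction is routine: given $(\lambda, \Phi)$ as in the statement, the function $\Phi(t)\,Y_p(\omega_x)\,Z_q(\omega_y)$ is a bona fide eigenfunction of \eqref{eqn:Robin-problem}. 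The main obstacle is the careful bookkeeping of $\rho$-factors and sign conventions in the derivation of the boundary condition: $H$ scales as $\rho^{-1}$ while the tangential sphere gradient carries a factor of $\rho^{\alpha-1}$, and the inward-pointing convention on $\nu$ must be reconciled with the outward-pointing convention used to define $H$.
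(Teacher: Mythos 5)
Your proposal is correct and follows essentially the same route as the paper: equivariant separation of variables $g = \Phi(t)Y_p(\omega_x)Z_q(\omega_y)$ with spherical harmonics on the two factors, reduction of the spherical Laplacian in the bispherical (warped-product) coordinate $t$ to \eqref{eqn:eigenvalue-condition}, the same computation of the boundary term and of $\rho H$ (the paper sums the principal curvatures directly rather than using the level-set formula, but the result is identical), and the same regularity analysis at $t=0$ via the indicial root $\sigma = q$.
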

\begin{proof}
    This result specializes the computation of~\eqref{eqn:Robin-problem} to the solution $U_{n,k}$.
    For the boundary term, we compute $\nabla t = \rho^{-1}\nabla^{\bS^{n-1}} t$, meaning that $|\nabla^{\bS^{n-1}} t| = \sqrt{1-t^2}$; moreover, the inward-pointing unit normal $\nu$ to $\partial \{ U_{n,k} > 0 \} = \{ (\rho, t) : t = t_{n,k} \}$ is given by $\nu = -\frac{\nabla^{\bS^{n-1}}t}{|\nabla^{\bS^{n-1}}t|}$.
Consequently,
\[
\la \nabla^{\bS^{n-1}} g, \nu\rg = g' \la \nabla^{\bS^{n-1}}t,\nu\rg = - \sqrt{1 - t_{n,k}^2} \, g'(t_{n,k}) \quad \text{on } \; \partial \{ U_{n,k} > 0 \}.
\]
The mean curvature of the $O(n-k) \times O(k)$-invariant hypersurface $\partial \{ U_{n,k} > 0 \} \subset \bR^n$ has contributions from $(n-k-1)$ principal curvatures equal to $\frac{t_{n,k}}{\rho\sqrt{1 -t_{n,k}^2}}$ along the $O(n-k)$-invariant factor, $(k-1)$ principal curvatures equal to $\frac{\sqrt{1-t_{n,k}^2}}{\rho t_{n,k}}$ along the $O(k)$-invariant factor, and one principal curvature along the radial direction which is equal to zero, since the normal direction is invariant under radial rescaling.
We therefore find that
\begin{align*}
    \rho H &= (n-k-1)\frac{t_{n,k}}{\sqrt{1-t_{n,k}^2}}-(k-1)\frac{\sqrt{1-t_{n,k}^2}}{t_{n,k}} = \frac{(n-2) t_{n,k} - (k-1) t_{n,k}^{-1} }{\sqrt{1 - t_{n,k}^2} }.
\end{align*}
Consequently, the boundary term in~\eqref{eqn:Robin-problem} becomes
\begin{equation}\label{eqn:mean-curvature-computation}
\la \nabla^{\bS^{n-1}} g, \nu \rg + H g = - \sqrt{1 - t^2_{n,k}} \Bigl( g'(t_{n,k}) - \frac{(n-2) t_{n,k} - (k-1) t_{n,k}^{-1}}{1 - t_{n,k}^2} g(t_{n,k}) \Bigr).
\end{equation}
We apply the equivariant separation of variables $g(x,y) = \Phi(t) X_p ( \frac{x}{|x|}) Y_q ( \frac{y}{|y|})$ on the link, where $X_p$ and $Y_q$ are spherical harmonics satisfying
\begin{align*}
    -\Delta_{\bS^{n-k-1}}X_p = p(p +n-k-2)X_p, \quad \text{ and}  \quad-\Delta_{\bS^{k-1}}Y_q = q(q +k-2)Y_q,
\end{align*}
for $(p,q) \in \bN_0 \times \bN_0$.
In the notation of~\eqref{eqn:legendre-form}, the Laplace operator becomes self-adjoint with
\[
- \frac{1-t^2}{p(t)} \frac{d}{dt} \bigl( p(t) \Phi'(t) \bigr) + \left( \frac{p(p+n-k-2)}{1-t^2} + \frac{q(q+k-2)}{t^2} \right) \Phi(t) = \lambda \Phi(t),
\]
and expanding this expression produces~\eqref{eqn:eigenvalue-condition}.
To obtain the boundary condition~\eqref{eqn:Phi-boundary-condition}, observe that $\frac{g}{\Phi(t)}$ is a purely angular function and $\partial_{\nu}$ acts only on $t$, as discussed in~\eqref{eqn:normal-derivative-angular}.
Consequently, the computation $\la \nabla^{\bS^{n-1}}g , \nu \rg +Hg = 0$ as in~\eqref{eqn:mean-curvature-computation} produces the desired expression.
We note that $t \sim |y|$ as $t \downarrow 0$, so $g \sim \Phi(t)X_p Y_q$ behaves like $Q_q(r \omega)$, the homogenization of $Y_q$ which is a harmonic polynomial on $\bR^{k}$, so the condition on $\Phi(0)$ near 0 shows that this extends smoothly across $\{ t = 0\}$.
\end{proof}

\begin{corollary}\label{cor:stability-criterion}
    Let $f_{n,k}(t)$ be as in Theorem~\ref{thm:stability-of-one-phase-cones}, having a zero at $t_{n,k}>0$.
    Let $g_{n,k,\alpha}(t)$ denote the hypergeometric function~\eqref{eqn:general-hypergeometric-expression}.
    Then, the cone $U_{n,k}$ is strictly stable if and only if
    \begin{equation}\label{eqn:strict-stability-inequality}
        \frac{g'_{n,k,\alpha}(t_{n,k})}{g_{n,k,\alpha}(t_{n,k})} > \frac{(n-2) t_{n,k} - (k-1) t_{n,k}^{-1}}{1 - t_{n,k}^2}
    \end{equation}
    holds for $\alpha = \frac{2-n}{2}$.
\end{corollary}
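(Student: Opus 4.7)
The plan is to directly specialize Proposition~\ref{prop:spectral-analysis} to the borderline eigenvalue $\lambda = -\bigl(\tfrac{n-2}{2}\bigr)^2$ and use the sign convention of the boundary operator from~\eqref{eqn:mean-curvature-computation}. By the discussion preceding Proposition~\ref{prop:spectral-analysis}, strict stability of $U_{n,k}$ is equivalent to $\lambda_1 > - \bigl(\tfrac{n-2}{2}\bigr)^2$, which by~\eqref{eqn:degree-of-homogeneity} corresponds exactly to the critical homogeneity $\alpha = \tfrac{2-n}{2}$. By Courant's nodal domain theorem the first eigenfunction has no nodal set inside $\Omega_S$, so we may assume it is invariant under the $O(n-k) \times O(k)$-action and even in $t$; this selects the mode $(p,q) = (0,0)$ in~\eqref{eqn:eigenvalue-condition}, reducing the Sturm-Liouville equation to the one solved by $g_{n,k,\alpha}(t)$ from Lemma~\ref{lemma:alpha-homogeneous}.

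I would then proceed as follows. First, take the candidate $\varphi(x,y) = \rho^{\alpha} g_{n,k,\alpha}(t)$ with $\alpha = \tfrac{2-n}{2}$. By Lemma~\ref{lemma:alpha-homogeneous}, $\varphi$ is harmonic in the positive cone $\{U_{n,k} > 0\}$ and (up to scaling) is the unique $O(n-k) \times O(k)$-invariant $\alpha$-homogeneous harmonic function there. Using the computation~\eqref{eqn:mean-curvature-computation} applied to $g = g_{n,k,\alpha}$, the Robin boundary operator $\la \nabla^{\bS^{n-1}} g,\nu\rg + Hg$ evaluated on $\partial\{U_{n,k}>0\}$ is
\[
-\sqrt{1-t_{n,k}^2}\left( g_{n,k,\alpha}'(t_{n,k}) - \frac{(n-2)t_{n,k} - (k-1)t_{n,k}^{-1}}{1-t_{n,k}^2}\, g_{n,k,\alpha}(t_{n,k}) \right).
\]
By the monotonicity argument from the end of \S\ref{subsection:stability-of-solutions} (``unstable/borderline/strictly stable if this boundary term is positive/zero/negative''), strict stability is equivalent to the quantity in parentheses being strictly positive.

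Finally, I need to divide by $g_{n,k,\alpha}(t_{n,k})$ and verify that this quantity is strictly positive to preserve the sign of the inequality. This is precisely the content of the observation recorded after Lemma~\ref{lemma:homogeneous-solution-fn,k}: for $\alpha \in (1-n,1)$ the hypergeometric function $g_{n,k,\alpha}(t)$ is strictly positive on $[-t_{n,k}, t_{n,k}]$, and our choice $\alpha = \tfrac{2-n}{2}$ lies in this range for every $n \geq 3$. Dividing through yields exactly the inequality~\eqref{eqn:strict-stability-inequality}, completing the equivalence.

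The only nontrivial step is justifying that the first eigenfunction $g_1$ really is $O(n-k)\times O(k)$-invariant with $(p,q)=(0,0)$, which I would argue from Courant's theorem together with the fact that adding angular dependence on either $\bS^{n-k-1}$ or $\bS^{k-1}$ strictly raises the eigenvalue (the $p(p+n-k-2)/(1-t^2)$ and $q(q+k-2)/t^2$ terms in~\eqref{eqn:eigenvalue-condition} are nonnegative bulk potentials). Once this selection is made, the remainder is a bookkeeping exercise translating the abstract Robin condition into the explicit inequality on $g_{n,k,\alpha}$.
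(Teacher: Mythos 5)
Your proposal is correct and follows essentially the same route as the paper: identify the $\tfrac{2-n}{2}$-homogeneous $O(n-k)\times O(k)$-invariant harmonic function via Lemma~\ref{lemma:alpha-homogeneous}, evaluate the Robin boundary operator with the computation~\eqref{eqn:mean-curvature-computation}, and divide by $g_{n,k,\alpha}(t_{n,k})>0$ to read off the sign. The extra justification you give for selecting the $(p,q)=(0,0)$ mode via Courant's theorem is exactly the point the paper handles in the remark following Lemma~\ref{lemma:homogeneous-solution-fn,k}, so nothing is missing.
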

\begin{proof}
As remarked above, the strict stability (resp.~borderline stability, instability) of the cone $U_{n,k}$ is equivalent to the boundary term of~\eqref{prop:spectral-analysis} being strictly negative (resp.~zero, positive) for the homogeneous harmonic function $\varphi = \rho^{- \frac{n-2}{2}} g(\omega)$.
Lemma~\ref{lemma:alpha-homogeneous} shows that the homogeneous harmonic function of degree $\alpha = \frac{2-n}{2}$ on $\{ U_{n,k}  >  0 \}$ is given by $\varphi = \rho^{\frac{2-n}{2}} g_{n,k,\alpha}(t)$, and since $g_{n,k,\alpha} > 0$, the computation~\eqref{eqn:mean-curvature-computation} implies
\begin{equation}\label{eqn:sign-boundary-term}
\text{sgn} \bigl( \la \nabla^{\bS^{n-1}} g, \nu \rg + H g \bigr) = - \on{sgn} \Bigl( \frac{g'_{n,k,\alpha}(t_{n,k})}{g_{n,k,\alpha}(t_{n,k})} - \frac{(n-2) t_{n,k} - (k-1) t_{n,k}^{-1}}{1 - t_{n,k}^2} \Bigr).
\end{equation}
We therefore obtained the claimed strict stability condition~\eqref{eqn:strict-stability-inequality}.
\end{proof}

\subsection{Hypergeometric functions}

For $O(n-k) \times O(k)$-invariant homogeneous solutions of the one-phase problem, Lemma~\ref{lemma:homogeneous-solution-fn,k} shows that the profile curve is described by a hypergeometric function.
For real numbers $a,b,c$ with $c>0$, the hypergeometric function ${}_2 F_1 ( a,b ; c ; s)$ is defined as the solution of Euler's differential equation
\[
s(1-s) F''(s) + \left[ c - (a+b+1) s \right] \, F'(s) - ab \, F(s) = 0
\]
with $F(0) = 1$ and $F'(0) = \frac{ab}{c}$.
This function has the convergent power series expansion
\[
{}_2F_1(a,b;c;s) = \sum_{m=0}^{\infty} \frac{(a)_m(b)_m}{(c)_m} \frac{s^m}{m!} = 1 + \frac{ab}{c} \frac{s}{1} + \frac{a(a+1)b(b+1)}{c(c+1)} \frac{s^2}{2!} + \cdots, \qquad |s|<1,
\]
where $(q)_m$ denotes the rising Pochhammer symbol $(q)_m := \frac{\Gamma(q+m)}{\Gamma(q)} = q(q+1) \cdots (q+m-1)$.
The function is symmetric in $a, b$; if $a$ or $b$ is a positive integer, then the series terminates and the function reduces to a polynomial.
Iterating the identity $(a)_{m+1} = a(a+1)_m$, one obtains
\begin{equation}\label{eqn:d-n-derivative}
    \frac{d^m}{ds^m}{}_2F_1 (a,b ; c ; s) = \frac{(a)_m (b)_m}{(c)_m} {}_2 F_1 (a+m,b+m;c+m;s).
\end{equation}
When $c>b>0$, the hypergeometric function admits the integral formula
\begin{equation}\label{eqn:hypergeometric-integral-formula}
    {}_2 F_1 (a,b; c; s) = \frac{\Gamma(c)}{\Gamma(b) \Gamma(c-b)} \int_0^1 \tau^{b-1} (1-\tau)^{c-b-1} (1 - \tau s)^{-a} \, d \tau.
\end{equation}
We refer the reader to~\cites{gauss} and~\cite{dtmf}*{Ch.~15} for some properties of these functions that we will use.
We remark that the odd solution of the operator $\cL_{n,k}f = 0$ from~\eqref{eqn:legendre-form} is given by
\begin{equation}\label{eqn:fn,k,odd}
    f_{n,k,\text{odd}}(t) = (\on{sgn} \, t)^{k-1} \cdot t^{2-k}{}_2 F_1 \left( \frac{n-k+1}{ 2}, \frac{1-k}{2} ; 2 - \frac{k}{2}; t^2 \right)
\end{equation}
where multiplying by $(\on{sgn} \, t)^{k-1}$ produces an odd extension across $t=0$ when $k$ is even.

In the case $k=1$, we have $\cL_{n,1}(t) = 0$, and the second solution is obtained by a reduction of order.
We therefore obtain
\begin{equation}\label{eqn:fn,1(t)}
    f_{n,1}(t) = {}_2 F_1 \left( \frac{n-1}{2} , - \frac{1}{2} ; \frac{1}{2} ; t^2 \right) = t \int^t s^{-2} (1-s^2)^{- \frac{n-1}{2}} \, ds
\end{equation}
with the integration constant chosen so that $f_{n,1}(0) = 1$.

For $k = 2m+1$ or $k=2m$, the resulting functions have the form
\begin{equation}\label{eqn:general0frm}
\begin{split}
    f_{n,2m+1}(t) &= \frac{P_{n,m}(t^2)}{t^{2m}} \arcsin t + \sqrt{1-t^2} \frac{Q_{n,m}(t^2)}{t^{2m}}, \\
    f_{n,2m}(t) &= \frac{\tilde{P}_{n,m}(t^2)}{t^{2m-1}} \on{atanh} t + \sqrt{1-t^2} \frac{\tilde{Q}_{n,m}(t^2)}{t^{2m-1}},
\end{split}
\end{equation}
where the $P_{n,m}, \tilde{P}_{n,m}, Q_{n,m}, \tilde{Q}_{n,m}$ are explicit polynomials of degree $\leq m$ with rational coefficients depending on $n$.
For instance, we compute
\begin{align*}
f_{7,1}(t) &= - \frac{15}{8} t \, \on{atanh} t + \frac{ \frac{15}{8} t^4 - \frac{25}{8} t^2 + 1}{(1-t^2)^2}, \\
f_{8,1} (t) &= (1-t^2)^{- \frac{5}{2}} \left( 1-6t^2 + 8 t^4 - \frac{16}{5} t^6 \right).
\end{align*}
We will use the fact that $b = - \tfrac{1}{2}$ for the hypergeometric functions in our situation, whereby $(b)_m = (- \frac{1}{2})_m < 0$ for all $m \geq 1$.
This implies that $\tfrac{d^m}{ds^m} {}_2 F_1(a,b;c;s) < 0$ for the derivatives of all orders when $s<1$, $f_{n,k}(t)$ can be bounded by the polynomial
\begin{equation}\label{eqn:Fk(s)-cubic-bound}
f_{n,k}(t) \leq 1 - \frac{n-1}{2k} t^2 - \frac{n^2-1}{8k(k+2)} t^4 - \frac{(n^2-1)(n+3)}{16k(k+2)(k+4)} t^6
\end{equation}
which we will use to bound the roots $t_{n,k}$.

\section{Analysis of the linearized problem}

Using Corollary~\ref{cor:stability-criterion}, we reduce the stability of the one-phase cones $U_{n,k}$ to the validity of the inequality~\eqref{eqn:strict-stability-inequality}.
We will prove this inequality for all $n \geq 7$ and $1 \leq k \leq n-2$ by analyzing the hypergeometric functions $f_{n,k}(t)$.
We use the notation for $f_{n,k}$ and $g_{n,k,\alpha}$ introduced in~\eqref{eqn:general-hypergeometric-expression} and~\eqref{eqn:one-phase-solution}.
Note that $g_{n,k,\alpha}$ coincides with $f_{n,k}$ for $\alpha = 1$; in our setting for $g$, we focus on $\alpha \in (2-n,0)$.
The hypergeometric function $g_{n,k,\alpha}(t)$ is defined as the even solution of the ODE
\[
(1-t^2) g'' + \left( (k-1) t^{-1} - (n-1) t \right) g' + \alpha (\alpha + n-2) g = 0
\]
with initial conditions $g(0) = 1$ and $g'(0) = 0$.
We will suppress the dependence of $f,g$ on the parameters $n,k,\alpha$ when those are well-understood.

The main result of this section is that $\alpha = 4-n$ satisfies the inequality~\eqref{eqn:strict-stability-inequality} for all $n \geq 7$; this will prove Proposition~\ref{prop:strict-subsolution}.
For brevity, we will let ${\cA}_{n,k} \subset (2-n,0)$ denote the set of $\alpha \in (2-n,0)$ for which the condition \eqref{eqn:g-n,k-inequality} holds, so our claim is that $4 - n \in \cA_{n,k}$.
The argument treats a number of different cases, depending on the values of $\frac{k}{n} \in (0,1)$.

\subsection{Propagation of positivity}

First, we establish some results that allow us to propagate positivity backwards.
This will enable us to prove the inequality~\eqref{eqn:strict-stability-inequality} once certain estimates are in place.
In what follows, we suppress the dependence on $n,k$ and write $g_{\alpha} := g_{n,k,\alpha}$.

\begin{lemma}\label{lemma:riccati-g-is-positive}
    For any $\alpha \in (2-n,0)$, the function
    \[
    \frac{g'_{\alpha}(t)}{g_{\alpha}(t)} - \frac{(n-2) t - (k-1) t^{-1}}{1-t^2}
    \]
    has at most one zero in the interval $(0,1)$.
    If it is positive at some $t_* > 0$, then it is everywhere positive on $(0,t_*)$.
\end{lemma}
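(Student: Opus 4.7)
The plan is to recast the claim via critical points of an auxiliary function. Setting $P(t) := t^{k-1}(1-t^2)^{(n-k-1)/2}$, one checks directly that $h = -\frac{d}{dt}\log P$, so defining $\eta(t) := g_\alpha(t) P(t)$ gives $\psi = \eta'/\eta$. Therefore zeros of $\psi$ on $(0,1)$ are precisely the critical points of $\eta$. Since $g_\alpha > 0$ on $[-1,1]$ for $\alpha \in (2-n,0)$ (from the discussion following~\eqref{eqn:general-hypergeometric-expression}), $\eta$ is strictly positive on $(0,1)$ with $\eta(1^-) = 0$, $\eta(0) = 0$ for $k \geq 2$, and $\eta(0) = 1$ for $k = 1$.

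Starting from the ODE $(1-t^2)g''_\alpha + ((k-1)/t - (n-1)t)\, g'_\alpha - \mu\, g_\alpha = 0$ with $\mu := -\alpha(\alpha+n-2) > 0$, substituting $g_\alpha = \eta/P$ and simplifying yields
\[
(1-t^2)\eta'' + \Bigl[(n-3)t - \tfrac{k-1}{t}\Bigr]\eta' + [\Phi(t) - \mu]\eta = 0,\qquad \Phi(t) := \tfrac{k-1}{t^2} + \tfrac{n-k-1}{1-t^2}.
\]
At any critical point $t_0$ of $\eta$, evaluating this ODE yields the key identity $\psi'(t_0) = (\mu - \Phi(t_0))/(1-t_0^2)$. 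The endpoint behavior of $\eta$ forces it to attain a first interior local maximum $t_0^{(1)} \in (0,1)$. Since $\eta''(t_0^{(1)}) \leq 0$, we have $\Phi(t_0^{(1)}) \geq \mu$, and hence $\psi'(t_0^{(1)}) \leq 0$. This already yields the ``positivity propagates backward'' conclusion: $\psi$ transitions from nonnegative to nonpositive at its first zero.

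For $k = 1$, the function $\Phi(t) = (n-2)/(1-t^2)$ is strictly increasing on $(0,1)$, so $\Phi(t) > \Phi(t_0^{(1)}) \geq \mu$ for every $t > t_0^{(1)}$. Any subsequent critical point of $\eta$ would then also satisfy $\eta'' < 0$, and so be a strict local maximum; this is impossible between two consecutive local maxima of a smooth function without an intervening local minimum. Hence $\eta$ is unimodal and has a unique critical point.

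The main obstacle is the case $k \geq 2$, where $\Phi$ is unimodal with a unique interior minimum at $t_{\Phi\ast} = \sqrt{(k-1)/(n-2)}$ and the sub-level set $\{\Phi < \mu\}$ may be a non-empty interval $(t_-, t_+)$; a putative subsequent local minimum of $\eta$ would be forced by the sign of $\psi'$ to lie in $(t_-, t_+)$. To rule this out, I would adapt a Sturm--Liouville eigenvalue monotonicity argument: interpret $\mu$ as the first eigenvalue $\mu_1(t_0)$ of the Robin problem on $(0, t_0)$ with boundary condition $g'(t_0)/g(t_0) = h(t_0)$, and apply Hadamard's variational formula, which gives $\mu_1'(t_0) \propto \Phi(t_0) - \mu_1(t_0)$. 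The task then reduces to showing $\mu_1(t) < \Phi(t)$ strictly on $(0,1)$, which makes $\mu_1(t)$ strictly monotonic and hence attains each value at most once; this strict comparison could be obtained by a Picone-type identity or by tracking the explicit comparison solution $\eta = P$ corresponding to $\mu = 0$.
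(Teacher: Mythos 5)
Your change of variables is sound: with $P(t)=t^{k-1}(1-t^2)^{(n-k-1)/2}$ one indeed has $h=-(\log P)'$, the transformed equation $(1-t^2)\eta''+\bigl[(n-3)t-\tfrac{k-1}{t}\bigr]\eta'+[\Phi(t)-\mu]\eta=0$ is correct, and your identity $\psi'(t_0)=\frac{\mu-\Phi(t_0)}{1-t_0^2}$ at a zero of $\psi$ is exactly equivalent to the sign relation the paper extracts from its Riccati equation (in the paper's notation, $P_{n,k}(t^2)=t^2(1-t^2)\bigl(\Phi(t)-\mu\bigr)$, so ``$\operatorname{sgn}L'=-\operatorname{sgn}P$ at a zero of $L$'' is the same fact). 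But the proof is not complete where it matters. First, a small gap at $k=1$: since $\eta(0)=1>0=\eta(1^-)$, the endpoint behavior does \emph{not} force a first interior local maximum; when $\mu\le n-2$ one has $\eta'(0)=0$ and $\eta''(0)=\mu-(n-2)\le 0$, so $\eta$ may start out decreasing and your argument, which only controls critical points occurring after a first maximum, does not by itself exclude a minimum-then-maximum configuration (it is excluded, but by the extra observation that an upward crossing at $t_m$ would force $\mu\ge\Phi(t_m)>n-2$, which you never make). Relatedly, the sentence ``this already yields the positivity-propagates-backward conclusion'' is premature: backward propagation from an arbitrary $t_*$ requires precisely the exclusion of an earlier upward crossing, which at that stage you have not established for any $k$.

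The essential gap is the case $k\ge 2$ with $\{\Phi<\mu\}\ne\varnothing$, which you correctly identify as the main obstacle but then only sketch. Your proposed route — interpret $\mu$ as the first Robin eigenvalue $\mu_1(t_0)$ on $(0,t_0)$, invoke a Hadamard-type variation formula $\mu_1'(t_0)\propto\Phi(t_0)-\mu_1(t_0)$, and ``reduce'' to the strict comparison $\mu_1(t)<\Phi(t)$ on $(0,1)$ — does not reduce the difficulty: at any zero $t_0$ of $\psi$ one has $\mu=\mu_1(t_0)$, so the inequality $\mu_1(t_0)<\Phi(t_0)$ \emph{is} the statement that every zero of $\psi$ is a strict downward crossing, from which the lemma follows immediately with no eigenvalue monotonicity at all; conversely, nothing in your sketch (Picone identity, or the $\mu=0$ comparison solution $\eta=P$) proves it. In other words, the target of your reduction is essentially the lemma itself. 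This is exactly the point where the paper does real work: it shows that $L$ (equivalently $t(1-t^2)\psi$) stays positive up to the first root $s_1$ of $P_{n,k}$ by constructing an explicit linear Riccati subsolution $w(s)=(k-1)-\beta s$ for \eqref{eqn:L-equation-this-one-promise}, choosing $\beta$ via a discriminant analysis of the quadratic $Q(\beta)$; only then does the sign relation at zeros finish the argument. The Hadamard/variational formulation would also need care at the singular endpoint $t=0$ and for the $t_0$-dependent Robin coefficient $h(t_0)$, but the decisive missing ingredient is a proof of the comparison $\mu_1<\Phi$ (or, equivalently, a barrier argument such as the paper's) in the regime where $P_{n,k}$ changes sign.
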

\begin{proof}
    We will work with the function
    \[
    \tilde{L}_g(t) := t (1-t^2) \frac{g'_{\alpha}(t)}{g_{\alpha}(t)} - (n-2) t^2 + (k-1)
    \]
    where $g_{\alpha}(t) := {}_2 F_1 \left( \frac{n+\alpha-2}{2}, - \frac{\alpha}{2} ; \frac{k}{2} ; t^2 \right)$.
    We first observe that $\tilde{L}_g(1) =-1$.
    Indeed, for $k < n-2$ we have $g_{\alpha}(t) \sim C (1-t^2)^{- \frac{n-k-2}{2}}$, so
    \[
    (1-t^2) \frac{g'_{\alpha}(t)}{g_{\alpha}(t)} = (1-t^2) \frac{d}{dt} \log g_{\alpha} \xrightarrow{t \to1} (1-t^2) \frac{n-k-2}{1-t^2} = n-k-2
    \]
    as $t \uparrow 1$.
    If $k=n-2$, then $g_{\alpha}(t) \sim C_0 + C_1 \log (1-t^2)$, so
    \[
    \frac{g'_{\alpha}(t)}{g_{\alpha}(t)} \sim -\frac{2t}{(1-t^2) \log (1-t^2)} \implies (1-t^2) \frac{g'_{\alpha}(t)}{g_{\alpha}(t)} \xrightarrow{t \to 1} 0
    \]
    as $t \uparrow 1$.
    In either case, we evaluate
    \[
    \lim_{t \uparrow 1} (1-t^2) \frac{g'_{\alpha}(t)}{g_{\alpha}(t)} = n-k-2
    \]
    as well as
    \[
    \left( (n-2)t- (k-1) t^{-1} \right) \Big\rvert_{t=1} = n-k-1.
    \]
    Combining the two computations, we obtain $\tilde{L}_g(1) = -1$.
    Using the differential equation satisfied by $g_{\alpha}$, as well as its associated Riccati form, we obtain by direct computation that
    \[
    t (1-t^2) \tilde{L}'_g(t) + \tilde{L}_g(t)^2 + (nt^2 - k) \tilde{L}_g(t) + P(t^2) = 0 ,
    \]
     where we define $P(s) := (n-2k) s + \alpha (\alpha + n-2) s (1-s) + (k-1)$.
     We introduce the function $L(s) := \tilde{L}_g(t)$, for $s = t^2$, which therefore satisfies the Riccati equation
     \begin{equation}\label{eqn:L-equation-this-one-promise}
         2 s (1-s) L'(s) + L(s)^2 + (ns-k) L(s) + P(s) = 0.
     \end{equation}
     In what follows, let us denote $\hat{\alpha} := \alpha ( \alpha+ n-2)$, for brevity, so $\hat{\alpha} \in ( - ( \frac{n-2}{2})^2, 0)$ for $\alpha \in (2-n,0)$ and we may write $P(s) = (n-2k) s + \hat{\alpha} s(1-s) + (k-1)$.
     The initial conditions for $g$ imply 
     \[
     L(0) = k-1, \qquad L'(0) = - (n-2) - \tfrac{\hat{\alpha}}{k}.
     \]
     Using the equation~\eqref{eqn:L-equation-this-one-promise}, we see that $\on{sgn} L'(s_*) = - \on{sgn} P(s_*)$ holds at any point where $L(s_*) = 0$.
     Since $\hat{\alpha}<0$, we have that $P(s)$ is a strictly convex quadratic function of $s$, with
     \[
     P(0) = k-1 \geq 0, \qquad P(1) = n-k-1 > 0.
     \]
     In particular, $P(s)$ has at most one sign change on $(0,1)$ if $k=1$, and at most two if $k>1$.

    We first consider the case $k=1$, so $L(0)=0$ and $L'(0) = - (n-2) - \hat{\alpha}$.
    If $L'(0) = 0$, then using~\eqref{eqn:L-equation-this-one-promise} gives $L''(0)<0$; therefore, $L'(0) \leq 0$ implies that $L<0$ for small $s>0$.
    Moreover,
    \[
    P(s) = (n-2) s + \hat{\alpha} s(1-s) = s \left( (n-2) + \hat{\alpha} - \hat{\alpha} s \right) = - s ( L'(0) + \hat{\alpha} s) > 0 \quad \text{for } \; s>0
    \]
    in this case, due to $L'(0) \leq 0$ and $\hat{\alpha}<0$.
    If there was a first instance $s_* \in (0,1)$ where $L(s_*) = 0$, we would have $L'(s_*) > 0$; however, this would force
    \[
    0 < \on{sgn} P(s_*)  = - \on{sgn} L'(s_*) < 0,
    \]
    leading to a contradiction.
    Consequently, $(n-2) + \hat{\alpha} \geq 0$ implies $L<0$ on $t \in (0,1)$.
    If $L'(0) = - ( n-2 + \hat{\alpha}) >0$, then the above computation shows that $L>0$  for small $s>0$, while $P<0$ for small $s>0$.
    At a first zero $s_* \in (0,1)$ of $L$, we must have $\on{sgn} L'(s_*) = - \on{sgn} P(s_*) <0$, meaning that the sign change of $L$ must occur after the sign change of $P$.
    If $L$ had another zero $\tilde{s}_* \in (s_*, 1)$, we would need $\on{sgn} L'(\tilde{s}_*) = - \on{sgn} P(\tilde{s}_*) \geq 0$, which is impossible since $P>0$ for $s>s_*$.
    Since $L(1) = -1$, we conclude that $L(s)$ has exactly one sign change on $(0,1]$ in this case.

    Combining the two cases, we conclude that $L(s)$ has at most one sign change on $(0,1]$ for $k=1$, and $L(1) = - 1$.
    Returning to $\tilde{L}_g(t) = L(t^2)$, we conclude that $\tilde{L}_g(t_*) > 0$ implies $\tilde{L}_g > 0$ on $(0, t_*]$, proving the claim in this case.
    As a byproduct of our argument, we obtain that $\tilde{L}_g(t_*) > 0$ is only possible for $L'(0) \geq 0$, which implies
    \[
    - L'(0) = (n-2) + \hat{\alpha} < 0 \iff (n-2) + \alpha (\alpha + n-2) < 0.
    \]
    Rearranging the inequality, we obtain that for any $t_* > 0$,
     \begin{equation}\label{eqn:a-subset-of}
        \left\{ \alpha : \frac{g'_{\alpha}(t_*)}{g_{\alpha}(t_*)} - \frac{(n-2) t_*}{1-t_*^2} \geq 0 \right\} \subset \left( \frac{2 - n - \sqrt{n^2 - 8n + 12}}{2} , \frac{2 - n + \sqrt{n^2 - 8n + 12}}{2} \right).
    \end{equation}
    We now focus on $k>1$, so $L(0) > 0$.
    Since $L(1) = -1$, the function has at least one zero on $(0,1)$; let $s_0$ be the first zero of $L$, and we will prove that it is the unique root.
    If $P(s)>0$ on $(0,1)$, then the computation $\on{sgn} L'(s_*) = - \on{sgn} P(s_*) < 0$ at a zero shows that $L$ cannot have an upward crossing, so it has exactly one zero $s_* = s_0$.
    Therefore, we focus on the case when $P(s)$ has exactly two zeros on $(0,1)$, which we denote by $s_1 < s_2$.
    We will prove $s_0 \geq s_1$.
    We further assume that $s_1 > \frac{k-1}{n-2}$, otherwise the result is clear from
    \[
    \tilde{L}_g(t) = t (1-t^2) \frac{g'_{\alpha}}{g_{\alpha}} + (k-1) - (n-2) t^2 > (k-1) - (n-2) t^2 > 0 \qquad \text{for } \; t \leq \sqrt{\tfrac{k-1}{n-2}}.
    \]
    Since $\hat{\alpha}<0$, the quadratic $P(s)$ has its absolute minimum over $\bR$ at $s = \frac{n-2k+ \hat{\alpha}}{2\hat{\alpha}}$.
    Since $P(0) , P(1) > 0$, the function has two roots $0 < s_1<s_2 < 1$ if $\frac{n-2k+\hat{\alpha}}{2 \hat{\alpha}} \in (0,1)$ and
    \[
    (n-2 k + \hat{\alpha})^2 + 4(k-1) \hat{\alpha} > 0 \iff |\hat{\alpha} + n-2| > 2\sqrt{(n-k-1) (k-1)} .
    \]
    We then compute the smaller root as
    \[
    s_1 = \frac{n - 2k + \hat{\alpha} + \sqrt{(n - 2k + \hat{\alpha})^2 + 4 (k-1) \hat{\alpha}}}{2 \hat{\alpha}}
    \]
    while $P( \frac{k-1}{n-2}) > 0$ implies $2(n-2) + \hat{\alpha}>0$.
    Consequently, $(n-2) + \frac{\hat{\alpha}}{k} > 0$ for $k \geq 2$.

    Since the ODE~\eqref{eqn:L-equation-this-one-promise} is first-order in $L$, we can prove that $L(s_1)>0$ by constructing an explicit subsolution $w(s)$ with $w(s_1) \geq 0$.
    Let $w(s) = (k-1) - \beta s$, for $\beta$ to be determined with $\beta > (n-2) + \frac{\hat{\alpha}}{k}$.
    Then, $w(0)= L(0)$ and $w'(0)<L'(0)$, so $w<L$ for small $s>0$.
    The ODE~\eqref{eqn:L-equation-this-one-promise} for $w(s)$ becomes
    \begin{align*}
        & 2s(1-s) w'(s) + w(s)^2 + (ns-k) w(s) + P(s) \\
        &\quad= s \left[ ( \beta^2 - (n-2) \beta- \hat{\alpha}) s + k(n-2-\beta) + \hat{\alpha} \right]
    \end{align*}
    whose sign is determined by the sign of the affine part.
    The negativity of this expression is therefore equivalent to negativity at $s=0$ and $s= s_1$.
    These conditions are respectively equivalent to
    \begin{equation}\label{eqn:beta-two-conditions}
    \beta \geq (n-2) + \tfrac{\hat{\alpha}}{k}, \qquad ( \beta^2 - (n-2) \beta - \hat{\alpha}) s_1 + k (n-2- \beta) + \hat{\alpha} < 0.
    \end{equation}
    We study the latter expression as a quadratic in $\beta$, so
    \[
    Q(\beta) = s_1 \beta^2 - ( (n-2) s_1 +k) \beta + (\hat{\alpha} - \hat{\alpha} s_1 + k (n-2)).
    \]
    Notably, $Q$ attains its minimum at $\beta_* = \frac{(n-2) s_1 + k}{2s_1}$, where
    \begin{align*}
        \min_{\beta} Q(\beta) &= Q(\beta_*) = - \frac{( (n-2) s_1 + k)^2}{4s_1} + \hat{\alpha}(1 - s_1) + k(n-2) \\
        &= n(k-1) - \frac{k-1}{s_1} - \frac{((n-2) s_1+ k)^2}{4s_1} \\
        &= \frac{4(k-1) (ns_1 - 1) - (k + (n-2) s_1)^2}{4s_1},
    \end{align*}
    where we used the fact that $s_1$ satisfies $P(s_1) = 0$, therefore
    \[
    \hat{\alpha}(1-s_1) = - (n-2k) - \tfrac{k-1}{s_1}.
    \]
    Finally, direct computation shows that
    \[
    4(k-1)(ns-1) - ( k + (n-2) s)^2 < 0 \qquad \text{for } \; s \in (0,1)
    \]
    holds for every $n \geq 3$ and $2 \leq k \leq n-2$.
    Indeed, the quadratic has values at the endpoints $s=0$ and $s=1$ given by $-4(k-1) - k^2 < 0$ and $-(n-k)^2 < 0$, respectively, while its discriminant is $\Delta = - 32(k-1) ( 2+n(n-k-2))< 0$.
    It follows that $\{ \beta : Q(\beta) < 0 \} \neq \varnothing$.

    We now verify that the set of $\beta$ satisfying both conditions~\eqref{eqn:beta-two-conditions} is non-empty.
    Since the first condition $\beta \geq (n-2) + \frac{\hat{\alpha}}{k}$ describes a ray, while $Q(\beta) < 0$ is a downward parabola, the two constraints have non-empty intersection provided that
    \[
    Q( n-2 + \tfrac{\hat{\alpha}}{k} ) < 0 \qquad \text{or} \qquad Q'(n-2 + \tfrac{\hat{\alpha}}{k}) < 0.
    \]
    The former condition is clear, whereas the latter would ensure that the ray $\beta \geq (n-2) + \frac{\hat{\alpha}}{k}$ has non-empty intersection with the set $\{ \beta : Q(\beta) \neq 0 \} \neq \varnothing$.
    We compute
    \begin{align*}
        Q( n-2 + \tfrac{\hat{\alpha}}{k}) &= k^{-2} \hat{\alpha} s_{1}( \hat{\alpha} +k(n-k-2)), \\
    Q'(\beta) &= 2 s_1 \beta - ( (n-2) s_1 + k), \qquad Q'( n-2 + \tfrac{\hat{\alpha}}{k}) = (n-2 + \tfrac{2 \hat{\alpha}}{k}) s_1 - k.
    \end{align*}
    The first simplification comes from the fact that $s_1$ is a root of $P (s)$, therefore satisfies
    \[
    - \hat{\alpha} s_1^2 + ( n-2k + \hat{\alpha}) s_1 + (k-1) = 0.
    \]
    Consequently, $Q( n - 2 + \frac{\hat{\alpha}}{k})< 0$ if $\hat{\alpha}>-k(n-k-2)$ already satisfies our condition.
    Otherwise, $\hat{\alpha} \leq - k(n-k-2)$ implies
    \begin{align*}
    Q' ( n-2 + \tfrac{ \hat{\alpha}}{k}) &= ( n- 2 + \tfrac{2 \hat{\alpha}}{k}) s_1 - k \leq (-n+2k+2) s_1 - k \\
    &< - n + 2k+2 - k = - (n-k-2) 
    \end{align*} 
    due to $s_1<1$, so $Q' ( n - 2 + \tfrac{\hat{\alpha}}{k}) <0$.
    It follows that in the above situation, we can always produce some $\beta > (n-2) + \frac{\hat{\alpha}}{k}$ such that the function $w(s) = (k-1) - \beta s$ is a strict subsolution of the ODE~\eqref{eqn:L-equation-this-one-promise} with $w(s_1) \geq 0$ and $w>0$ on $[0,s_1)$.
    Consequently, $L>0$ on $[0,s_1)$, whereby the first zero of $L$ satisfies $s_0 > s_1$.
    At that point, we have $L'(s_0) < 0$ and $\on{sgn} P(s_0) = - \on{sgn} L'(s_0) > 0$.
    Therefore, $s_0 \geq s_1$ and $P<0$ on $(s_1, s_2)$ forces $s_0 > s_2$.
    If $L$ had another zero $\tilde{s}_0 \in (s_0, 1)$, we would need $\on{sgn} L'(\tilde{s}_0) = - \on{sgn} P(\tilde{s}_0) \geq 0$, which is impossible since $P>0$ for $s>s_0$.
    This proves that $L$ has a unique sign change on $(0,1)$, and so does $\tilde{L}_g(t) = L(t^2)$.
    In particular, $\tilde{L}_g(t_*) > 0$ for some $t_* > 0$ implies that $\tilde{L}_g > 0$ on $[0,t_*]$, completing the proof.
\end{proof}

\begin{lemma}\label{lemma:interval-of-alpha}
Let $t_{n,k}>0$ denote the zero of the function $f_{n,k}(t) := {}_2 F_1 ( \frac{n-1}{2} , - \frac{1}{2} ; \frac{k}{2} ; t^2)$ and consider the following condition for $\alpha \in (2-n,0)$:
    \begin{equation}\label{eqn:g-n,k-inequality}
    \frac{g'_{\alpha}(t_{n,k})}{g_{\alpha}(t_{n,k})} > \frac{(n-2) t_{n,k} - (k-1) t_{n,k}^{-1}}{1 - t_{n,k}^2}.
    \end{equation}
    Let ${\cA}_{n,k} \subset (2-n,0)$ denote the set of $\alpha \in (2-n,0)$ for which the condition \eqref{eqn:g-n,k-inequality} holds; this is an interval with midpoint $1 - \frac{n}{2} = \frac{2-n}{2}$.
    For $\alpha \in \cA_{n,k}$, the function
    \[
        \frac{g'_{\alpha}(t)}{g_{\alpha}(t)} - \frac{ (n-2) t - (k-1) t^{-1} }{1-t^2}
    \]
    is everywhere positive on $(0,t_{n,k}]$.
\end{lemma}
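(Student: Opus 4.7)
The plan rests on two observations. First, $g_{n,k,\alpha}$ depends on $\alpha$ only through the combination $\hat\alpha := \alpha(\alpha + n-2)$: the hypergeometric parameters $a = \tfrac{n+\alpha-2}{2}$, $b = -\tfrac{\alpha}{2}$ in~\eqref{eqn:general-hypergeometric-expression} satisfy $a + b = \tfrac{n-2}{2}$ (independent of $\alpha$) and $ab = -\hat\alpha/4$, both determined by $\hat\alpha$. Since ${}_2 F_1$ is symmetric in its first two arguments and the involution $\alpha \mapsto 2-n-\alpha$ swaps $a \leftrightarrow b$, one has $g_\alpha = g_{2-n-\alpha}$. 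The fixed point of this involution is precisely $\tfrac{2-n}{2}$, so the condition~\eqref{eqn:g-n,k-inequality} is invariant under $\alpha \mapsto 2-n-\alpha$, forcing $\cA_{n,k}$ to be symmetric about the proposed midpoint.

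Second, to show that $\cA_{n,k}$ is an interval, the plan is to prove via a Wronskian argument that
\[
\hat L(\hat\alpha) := \frac{g'_{\alpha}(t_{n,k})}{g_{\alpha}(t_{n,k})} - \frac{(n-2) t_{n,k} - (k-1) t_{n,k}^{-1}}{1 - t_{n,k}^2}
\]
is strictly decreasing as a function of $\hat\alpha \in \bigl(-(\tfrac{n-2}{2})^2, 0\bigr)$. Writing the ODE in the self-adjoint form $(p g')' = - \hat\alpha \tfrac{p}{1-t^2} g$ as in~\eqref{eqn:legendre-form}, for $\hat\alpha_1 < \hat\alpha_2$ define
\[
W(t) := p(t) \bigl( g_{\hat\alpha_1}(t) g'_{\hat\alpha_2}(t) - g_{\hat\alpha_2}(t) g'_{\hat\alpha_1}(t) \bigr).
\]
A direct computation gives $W'(t) = (\hat\alpha_1 - \hat\alpha_2) \tfrac{p(t)}{1-t^2} g_{\hat\alpha_1}(t) g_{\hat\alpha_2}(t)$, which is strictly negative on $(0, t_{n,k}]$ since $g_{\hat\alpha_i} > 0$ there (as noted after~\eqref{eqn:legendre-form}). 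Combined with $W(0) = 0$, we obtain $W(t) < 0$ on $(0, t_{n,k}]$; since $p g_{\hat\alpha_1} g_{\hat\alpha_2} > 0$ on this interval, we conclude
\[
R_{\hat\alpha_2}(t) - R_{\hat\alpha_1}(t) = \frac{W(t)}{p(t) g_{\hat\alpha_1}(t) g_{\hat\alpha_2}(t)} < 0, \qquad R_{\hat\alpha} := g'_\alpha / g_\alpha,
\]
and evaluating at $t = t_{n,k}$ yields the strict monotonicity of $\hat L$ in $\hat\alpha$.

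Combining the two observations, $\{\hat\alpha : \hat L(\hat\alpha) > 0\}$ is a sub-interval of the form $\bigl(-(\tfrac{n-2}{2})^2, \hat\alpha^*\bigr)$, which pulls back under the symmetric quadratic $\alpha \mapsto \alpha(\alpha + n-2)$ to an interval in $\alpha$ centered at $\tfrac{2-n}{2}$, as claimed. Moreover, positivity of the Riccati expression throughout $(0, t_{n,k}]$ for $\alpha \in \cA_{n,k}$ follows at once from Lemma~\ref{lemma:riccati-g-is-positive} applied with $t_* = t_{n,k}$, since $\alpha \in \cA_{n,k}$ by definition means the expression is strictly positive at $t_{n,k}$.

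The main delicacy will be verifying the boundary value $W(0) = 0$: for $k \geq 2$ this follows from $p(0) = 0$, while for $k = 1$ one instead uses $p(0) = 1$ together with $g'_\alpha(0) = 0$ (since $g_\alpha$ is an even function by construction). Either way, integrating $W' < 0$ from $0$ yields $W < 0$ on $(0, t_{n,k}]$, which closes the monotonicity argument.
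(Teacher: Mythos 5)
Your proposal is correct, and its overall skeleton matches the paper's proof exactly: (i) the symmetry $g_\alpha = g_{2-n-\alpha}$ from the symmetry of ${}_2F_1$ in its first two parameters, giving the symmetry of $\cA_{n,k}$ about $\tfrac{2-n}{2}$; (ii) strict monotonicity of $g'_\alpha/g_\alpha$ at $t_{n,k}$ as a function of $\hat\alpha = \alpha(\alpha+n-2)$, giving the interval structure; (iii) an appeal to Lemma~\ref{lemma:riccati-g-is-positive} for the propagation of positivity to all of $(0,t_{n,k}]$. The one genuine difference is in step (ii): the paper differentiates the Riccati equation~\eqref{eqn:v-alpha-s-ODE} in the parameter $\hat\alpha$ and controls the variation field $w_{\hat\alpha}=\partial_{\hat\alpha}v_{\hat\alpha}$ by an integrating factor, whereas you compare two solutions of the linear second-order ODE directly through the weighted Wronskian $W = p(g_{\hat\alpha_1}g'_{\hat\alpha_2}-g_{\hat\alpha_2}g'_{\hat\alpha_1})$ in the self-adjoint form~\eqref{eqn:legendre-form}. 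The two are two faces of the same Sturm-type comparison, but your version is arguably cleaner: it avoids justifying differentiability in the parameter and the degenerate behavior of the integrating factor $\mu(s)\sim s^{k/2}$ at the singular endpoint $s=0$, replacing these by the elementary boundary check $W(0)=0$ (which you correctly split into the cases $k\geq 2$, where $p(0)=0$, and $k=1$, where $g'_\alpha(0)=0$). Your additional observation that $g_\alpha$ depends on $\alpha$ only through $\hat\alpha$ (via $a+b=\tfrac{n-2}{2}$, $ab=-\hat\alpha/4$) is correct and makes the reduction to a single monotone function $\hat L(\hat\alpha)$ slightly tidier than the paper's two-sided monotonicity statement in $\alpha$. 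The positivity of $g_{\hat\alpha_i}$ on $(0,t_{n,k}]$ needed for the sign of $W'$ is indeed covered by the remark following~\eqref{eqn:legendre-form} for $\alpha\in(2-n,0)$, so no gap remains.
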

\begin{proof}
    The hypergeometric function is symmetric in the first two arguments, meaning that
    \[
    g_{\alpha}(t) := {}_2 F_1 \left( \frac{n+\alpha-2}{2} , - \frac{\alpha}{2} ; \frac{k}{2} ; t^2 \right) = g_{2-n-\alpha}(t)
    \]
    is symmetric under the involution $\alpha \leftrightsquigarrow 2-n-\alpha$.
    The rational term $\frac{(n-2) t_{n,k} - (k-1) t_{n,k}^{-1}}{ 1 - t_{n,k}^2}$ is invariant under this symmetry, so inequality~\eqref{eqn:g-n,k-inequality} is invariant under this symmetry.
    The domain $\cA_{n,k}$ of viable $\alpha$ is thus symmetric with respect to $1 - \frac{n}{2}$.

    To see that $\cA_{n,k}$ is an interval, we prove that the map
    \begin{equation}\label{eqn:map-alpha-riccati}
        \alpha \mapsto \frac{g_{\alpha}'}{g_{\alpha}}, \qquad \alpha \in (2-n,0)
    \end{equation}
    is pointwise strictly increasing for $\alpha \in (2 - n , \frac{2-n}{2}]$ and pointwise strictly decreasing for $\alpha \in [\frac{2-n}{2}, 0)$.
    In particular, this will imply the corresponding monotonicity for $\alpha \mapsto \frac{g'_{\alpha}(t_{n,k})}{g_{\alpha}(t_{n,k})}$, hence the interval structure of $\cA_{n,k}$.
    Since $g$ satisfies~\eqref{eqn:laplace-rho-a-g}, writing $\frac{g''}{g} = v'_g + v_g^2$ shows that the function $v_g$ solves the Riccati ODE
    \begin{equation}\label{eqn:riccati-equation}
    (1-t^2) \left( \bigl(\tfrac{g'_{\alpha}}{g_{\alpha}} \bigr)' + \bigl( \tfrac{g'_{\alpha}}{g_{\alpha}} \bigr)^2 \right) + \left( (k-1) t^{-1} - (n-1) t \right) \tfrac{g'_{\alpha}}{g_{\alpha}} + \hat{\alpha} = 0, \qquad \hat{\alpha} := \alpha(\alpha+n-2).
    \end{equation}
    We let $\tfrac{g'_{\alpha}(t)}{g_{\alpha}(t)}= 2t v_{\hat{\alpha}}(t^2)$, so that $v_{\hat{\alpha}}(s)$ satisfies the ODE
    \begin{equation}\label{eqn:v-alpha-s-ODE}
        v'_{\hat{\alpha}} + v_{\hat{\alpha}}^2 + \frac{k - ns}{2s(1-s)} v_{\hat{\alpha}}+ \frac{\hat{\alpha}}{4s(1-s)} = 0.
    \end{equation}
    Using the power series expansion of the hypergeometric function, we observe that
    \[
    {}_2 F_1(a,b;c;s) = 1 + \tfrac{ab}{c} s + O (s^2), \quad \tfrac{d}{ds} {}_2 F_1(a,b;c;s) = \tfrac{ab}{c} + O(s), \quad \tfrac{d}{ds} \log {}_2 F_1(a,b;c;s) \big\rvert_{s=0} = \tfrac{ab}{c},
    \]
    and applying this discussion to our situation, for $g_{\alpha}$, we deduce that $v_{\hat{\alpha}}(0) = - \tfrac{\hat{\alpha}}{2k}$.
    The map $\alpha \mapsto \alpha (\alpha + n-2)$ is strictly decreasing for $\alpha \in (2-n, \tfrac{2-n}{2}]$ and strictly increasing for $\alpha \in [\frac{2-n}{2},0)$.
    Therefore, the monotonicity of~\eqref{eqn:map-alpha-riccati} is equivalent to the monotonicity of the map $\hat{\alpha} \mapsto v_{\hat{\alpha}}(t)$ for $\hat{\alpha} \in ( - ( \frac{n-2}{2})^2 , 0)$.
    Indeed, letting $w_{\hat{\alpha}} := \partial_{\hat{\alpha}} v_{\hat{\alpha}}$ denote the variation field along solutions produced by varying the parameter $\hat{\alpha}$ in~\eqref{eqn:v-alpha-s-ODE}, we equivalently want to show that $w_{\hat{\alpha}} < 0$ for $s \in [0,1]$.
    Differentiating~\eqref{eqn:v-alpha-s-ODE} in $\hat{\alpha}$, we find that
    \begin{align*}
    w'_{\hat{\alpha}} + \left( 2 v_{\hat{\alpha}}  + \tfrac{k-ns}{2 s(1-s)} \right) w_{\hat{\alpha}} &= - \tfrac{1}{4s(1-s)} , \\
    \tfrac{d}{ds} \bigl( \mu(s) w_{\hat{\alpha}}(s) \bigr) &= - \frac{1}{4s(1-s)} \mu(s), \qquad \mu(s) := \exp \left( \int_0^s \bigl( 2 v_{\hat{\alpha}}(\sigma) + \tfrac{k - n \sigma}{2 \sigma (1 - \sigma)} \bigr) \, d \sigma \right) 
\end{align*}
    whereby $\mu(s) w_{\hat{\alpha}}(s)$ is a strictly decreasing function.
    Moreover, $w_{\hat{\alpha}}(0) = \partial_{\hat{\alpha}} v_{\hat{\alpha}}(0) = - \frac{1}{2k} <0$, and since $\mu(s) > 0$, we conclude that $w_{\hat{\alpha}}(s) < 0$ for all $s \in [0,1]$, as claimed; this completes the proof.
    
    Finally, for $\alpha \in \cA_{n,k}$ satisfying~\eqref{eqn:g-n,k-inequality} with midpoint $1 - \frac{n}{2}$, the positivity of 
    \[
    \frac{g'_{\alpha}(t)}{g_{\alpha}(t)} - \frac{(n-2) t - (k-1) t^{-1}}{1-t^2} > 0 \qquad \text{for } \; t \in (0,t_{n,k}]
    \]
    follows from Lemma~\ref{lemma:riccati-g-is-positive}.

\end{proof}

\subsection{Estimates on the roots}

First we are interested in asymptotically sharp estimates on the roots $t_{n,k}$ of $f_{n,k}(t)$ for various ranges of $\frac{k}{n} \in (0,1)$.
In what follows, it is simpler to work with the zero $s_{n,k}$ of the function ${}_2F_1( \frac{n-1}{2}, - \frac{1}{2} ; \frac{k}{2} ; t^2)$, so that $t^2_{n,k} = s_{n,k}$.

\begin{lemma}\label{lemma:hypergeometric-zero-estimates}
    Let $s_{n,k} > 0$ denote the zero of the function ${}_2 F_1 (\frac{n-1}{2} , - \frac{1}{2} ; \frac{k}{2} ; s)$.
    For $n \geq 60$, we have
    \begin{align}
        s_{n,k} &\leq \tfrac{k}{n} + \tfrac{3}{5 \sqrt{n}}, & & \text{for } \; \tfrac{k}{n} \in [ \tfrac{1}{3}, \tfrac{7}{8} ], \label{eqn:sn,k-sqrt-bound-1} \\
        s_{n,k} & \leq \tfrac{k}{n} + \tfrac{2}{5 \sqrt{n}}, & & \text{for } \; \tfrac{k}{n} \in [ \tfrac{7}{8}, \tfrac{9}{10}], \label{eqn:sn,k-sqrt-bound-2} \\
        s_{n,k}&\leq \tfrac{k}{n} + \tfrac{9}{25 \sqrt{n}}, & & \text{for } \; \tfrac{k}{n} \in [ \tfrac{9}{10}, \tfrac{15}{16}].\label{eqn:sn,k-sqrt-bound-3}
    \end{align}
\end{lemma}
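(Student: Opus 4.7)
The starting point is the observation that in the power series
\[
f_{n,k}(\sqrt{s}) = 1 + \sum_{m \geq 1} a_m s^m, \qquad a_m := \frac{((n-1)/2)_m \, (-1/2)_m}{(k/2)_m \, m!},
\]
every coefficient $a_m$ for $m \geq 1$ is strictly negative, since $(-1/2)_m < 0$. Hence each partial sum $P_N(s) := 1 + \sum_{m=1}^N a_m s^m$ is a pointwise upper bound for $f_{n,k}(\sqrt{s})$ on $[0,1]$, and to establish $s_{n,k} \leq s_0$ at a target $s_0 = k/n + c_i/\sqrt{n}$ it suffices to exhibit some $N$ with $P_N(s_0) \leq 0$: the intermediate value theorem applied to $f_{n,k}$ then places its zero in $(0, \sqrt{s_0}]$.

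The cubic bound \eqref{eqn:Fk(s)-cubic-bound} alone does not suffice, because with $\kappa := k/n$ fixed one finds $P_3(\kappa) \to \tfrac{5}{16}$ as $n \to \infty$ while $-P_3'(\kappa) = O(1)$, so a correction of size $c/\sqrt{n}$ cannot drive $P_3$ below zero. However, the coefficient recursion
\[
\frac{a_{m+1}}{a_m} = \frac{((n-1)/2 + m)(m - 1/2)}{(k/2 + m)(m+1)}
\]
shows that for $s$ comparable to $k/n$ the contributions $a_m s^m$ stay of nearly constant sign and size as $m$ grows, so higher partial sums continue to decrease toward zero. A direct check at the sharpest boundary case $n = 60$, $\kappa = 1/3$, $s_0 = 1/3 + 3/(5\sqrt{60})$ already gives $P_6(s_0) < 0$, which suggests that some small fixed $N$ (I expect $N \in \{6, 7\}$) will suffice for all three ranges.

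With such an $N$ fixed, the plan is to split into the ranges $\kappa \in [\tfrac{1}{3}, \tfrac{7}{8}]$, $[\tfrac{7}{8}, \tfrac{9}{10}]$, $[\tfrac{9}{10}, \tfrac{15}{16}]$ and in each expand $P_N\bigl(k/n + c/\sqrt{n}\bigr)$ in powers of $1/\sqrt{n}$ using the explicit form of $a_m$. Using $n \geq 60$ to absorb subleading corrections, the required inequality reduces to a polynomial inequality in $\kappa$ on the given sub-interval. The narrowing of the intervals and the decreasing constants $c_1 = \tfrac{3}{5} > c_2 = \tfrac{2}{5} > c_3 = \tfrac{9}{25}$ reflect that $|P_N'(\kappa)|$ grows as $\kappa$ approaches $1$, so a smaller linear correction in $s$ is needed there; this is consistent with the structural observation that $f_{n,k}$ becomes more rapidly varying near its zero when $k/n \uparrow 1$.

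The main obstacle is the explicit algebraic verification: the coefficients $a_m$ are rational functions of $n$ and $k$ with different large-$n$ scalings, and establishing the three inequalities with the sharp constants $\tfrac{3}{5}, \tfrac{2}{5}, \tfrac{9}{25}$ will require careful bookkeeping, likely assisted by monotonicity arguments in $\kappa$ for fixed $n \geq 60$ and in $n$ for fixed $\kappa$, and possibly a finer subdivision of the three ranges into sub-intervals. The threshold $n \geq 60$ is precisely what closes the estimates in the tightest of the resulting sub-cases, and calibrating the choice of $N$ against the error terms introduced by this truncation is the most delicate part of the argument.
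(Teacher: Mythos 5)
Your sufficient criterion is sound (all coefficients $a_m$ with $m\geq 1$ are negative, so each partial sum $P_N$ majorizes the hypergeometric function and $P_N(s_0)\leq 0$ forces $s_{n,k}\leq s_0$), but the central claim --- that a \emph{fixed} small $N$ works uniformly for all $n\geq 60$ --- is false, and the objection is exactly the one you yourself raise against the cubic bound. For any fixed $N$, as $n\to\infty$ with $\lambda=k/n$ fixed one has $a_m\lambda^m\to\tfrac{(-1/2)_m}{m!}$, so
\[
P_N(\lambda)\;\longrightarrow\;C_N:=\sum_{m=0}^{N}\frac{(-1/2)_m}{m!}\;=\;\sum_{m=N+1}^{\infty}\Bigl|\tfrac{(-1/2)_m}{m!}\Bigr|\;\sim\;\frac{1}{\sqrt{\pi N}}\;>\;0,
\]
a positive constant independent of $n$ (for $N=6$, $C_6\approx 0.226$), while $P_N'(\lambda)=O(1)$ in $n$ (it tends to $-\lambda^{-1}\sum_{m=1}^{N}\frac{|(-1/2)_m|}{(m-1)!}\sim -\sqrt{N}/(\lambda\sqrt{\pi})$). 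Hence $P_N(\lambda+c/\sqrt{n})\to C_N>0$ as $n\to\infty$, and the partial-sum test fails for all sufficiently large $n$. Your check at $n=60$, $\lambda=1/3$ does give $P_6(s_0)\approx -0.001<0$, but this is the \emph{easiest} case for your method, not the sharpest: already at $n=240$, $\lambda=1/3$ one finds $P_6(s_0)\approx +0.05>0$. Balancing $C_N\sim N^{-1/2}$ against the gain $\sim c\sqrt{N}/(\lambda\sqrt{n})$ shows you would need $N\gtrsim\sqrt{n}$, at which point the ``nearly constant size'' heuristic for $a_ms^m$ breaks down (the ratio $(\tfrac{n-1}{2})_m\lambda^m/(\tfrac{k}{2})_m$ decays like $e^{-(1-\lambda)m^2/(\lambda n)}$) and a genuinely asymptotic analysis is unavoidable.

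This is precisely what the paper does, and why its constants are what they are: it rescales the hypergeometric ODE about the turning point $s=\lambda$ via $s=\lambda+z/\sqrt{n}$, obtains the universal limit equation $\phi''-\tilde z\phi'+\tfrac12\phi=0$ after normalizing $\tilde z=z/\sqrt{2\lambda(1-\lambda)}$, locates its first zero $\tilde z_0\in[\tfrac{76}{100},\tfrac{78}{100}]$, and concludes $s_{n,k}\leq\lambda+\sqrt{2\lambda(1-\lambda)}\,\tilde z_0/\sqrt{n}+O(n^{-3/2})$ with an explicit error constant for $n\geq 60$ (checking $60\leq n\leq 300$ directly). The constants $\tfrac35,\tfrac25,\tfrac{9}{25}$ are then the suprema of $\sqrt{2\lambda(1-\lambda)}\,\tilde z_0$ over the three subintervals plus a margin absorbing the $O(n^{-3/2})$ error; they sit only slightly above the true asymptotic location of the zero, which is why no scheme with an $n$-independent error floor can reach them. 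To salvage your approach you would have to let $N$ grow like $\sqrt{n}$ and control the resulting sums, which effectively reconstructs the paper's turning-point analysis in series form.
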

\begin{proof}
    We set $\lambda := \tfrac{k}{n}$, so the Lemma~is concerned with estimates of the form $s_{n,k} \leq \lambda + \frac{c}{\sqrt{n}}$ on three sub-intervals of $[ \tfrac{1}{3}, \frac{15}{16}]$.
    We develop a general framework for obtaining such estimates, and then specialize to the intervals at hand.
    The hypergeometric function satisfies
    \[
    2 s(1-s) F''(s) - n(s-\lambda) F'(s) + \tfrac{n}{2} F(s) + O(1) F(s) = 0
    \]
    and is strictly decreasing on $[0,1]$.
    In what follows, it suffices to consider $s_k > \frac{k}{n} = \lambda$, since the above bounds are immediate otherwise.
    We set $s = \lambda + \frac{z}{\sqrt{n}}$ and define $\phi(z) := F(\lambda + \frac{z}{\sqrt{n}})$, so that
    \begin{align*}
    F'(s) &= \sqrt{n} \, \phi'(z), \qquad F''(s) = n \, \phi''(z), \\
    2 s(1-s) &= 2 ( \lambda + O (n^{-1/2})) (1 - \lambda + O(n^{-1/2})) = 2 \lambda(1-\lambda) + O(n^{-1/2}),
    \end{align*}
    and $ns-k = n ( \lambda + \frac{z}{\sqrt{n}}) - n \lambda = \sqrt{n} z$.
    Dividing the transformed equation by $n$, we arrive at
    \[
    \bigl( 2\lambda(1-\lambda) + O(n^{-1/2}) \bigr) \phi''(z) - z \phi'(z) + \bigl( \tfrac{1}{2} + O(n^{-1}) \bigr) \phi(z) = 0.
    \]
    As $\lambda \in [ \frac{1}{3}, \frac{15}{16}]$ remains on a bounded interval away from the endpoints, we find that $\lambda(1-\lambda) > \frac{1}{50}$ remains uniformly bounded away from $0$.
    Therefore, the equation converges uniformly on compact sets (in the variable $z$) to the limit ODE
    \begin{equation}\label{eqn:phi-limit-ODE-n-to-infty}
    2 \lambda(1-\lambda) \phi''(z) - z \phi'(z) + \tfrac{1}{2} \phi(z) = 0.
\end{equation}
    Consequently, for large $n$ and $z$ on the scale $z = O(1)$ (meaning that $s-\lambda = O(n^{-1/2})$), the function $\phi(z)$ is well-approximated by the solution $\phi_{\lambda}$ of the ODE~\eqref{eqn:phi-limit-ODE-n-to-infty} with matching initial data at the turning point $s = \lambda$.
    Rescaling $z$ to $\tilde{z} := \frac{1}{\sqrt{2 \lambda(1-\lambda)}}z$, we arrive at the equation
    \[
    \phi'' - \tilde{z} \phi' + \tfrac{1}{2} \phi =0.
    \]
    The final equation is independent of $\lambda$ and has a first zero at a universal $\tilde{z}_0 > 0$; concretely, $\tilde{z}_0 \in [\frac{76}{100}, \frac{78}{100}]$.
    Returning to the variable $z$, we deduce that the rescaled function has a zero approaching $z_{\infty}(\lambda) = \sqrt{2\lambda(1-\lambda)} \, \tilde{z}_0$ as $n \to \infty$.
    Inspecting the above asymptotic bounds, we conclude that 
    \[
    s_{n,k} \leq \lambda + \frac{z_{\infty}(\lambda)}{\sqrt{n}} + \frac{C}{n \sqrt{n}} = \lambda + \frac{\sqrt{2 \lambda(1-\lambda)}}{\sqrt{n}} \tilde{z}_0 + \frac{C}{n \sqrt{n}}
    \]
    for an absolute constant $C$.
    A rough upper bound for the error term $C$ in this expansion is obtained by replacing each instance of $O(1), O(n^{-1/2})$, and $O(n^{-1})$ in the above estimates by the Taylor expansion with remainder; for $n \geq 60$, this enables us to bound $C < 5$.
    Moreover, direct computation by maximizing the quadratic $2 \lambda(1-\lambda)$ and using the exact value of $\tilde{z}_0$ implies that
    \[
    \sup_{[\frac{1}{3}, \frac{7}{8}]} \sqrt{2 \lambda(1-\lambda)} \, \tilde{z}_0 \leq  \frac{56}{100}, \qquad \sup_{[\frac{7}{8}, \frac{9}{10}]} \sqrt{2 \lambda(1-\lambda)} \, \tilde{z}_0 \leq  \frac{37}{100}, \qquad \sup_{[\frac{9}{10}, \frac{15}{16}]} \sqrt{2 \lambda(1-\lambda)} \, \tilde{z}_0 \leq  \frac{34}{100}.
    \]
    The conclusions~\eqref{eqn:sn,k-sqrt-bound-1}~--~\eqref{eqn:sn,k-sqrt-bound-3} are verified by direct computation for $60 \leq n \leq 300$.
    For $n \geq 300$, we apply the above bounds in the estimate for $s_{n,k}$ to obtain $s_{n,k} \leq \lambda + \frac{\tilde{c} + 5 n^{-1}}{\sqrt{n}} \leq \lambda + \frac{\tilde{c} + \frac{1}{60}}{\sqrt{n}}$.
    Finally, we observe that $c - \tilde{c} > \frac{1}{60}$ holds for the constants corresponding to each sub-interval; therefore, $s_{n,k} \leq \frac{k}{n} + \frac{c}{\sqrt{n}}$ completes the proof of~\eqref{eqn:sn,k-sqrt-bound-1}~--~\eqref{eqn:sn,k-sqrt-bound-3}.
\end{proof}

When $k$ is closer to $n$, we obtain an asymptotically sharp bound on $s_{n,k}$ as follows.
\begin{lemma}\label{lemma:oscillation-bound}
    Let $s_{n,k}>0$ denote the zero of the function ${}_2 F_1 ( \frac{n-1}{2}, - \frac{1}{2} ; \frac{k}{2} ; s)$ and suppose that $\frac{n}{2} \leq k \leq n-12$.
    Then, we have either $s_{n,k} < \frac{k}{n}$ or
    \begin{equation}\label{eqn:u-overshoot-bound}
    (n s_{n,k} - k)^2 \leq 2 n (1 - s_{n,k}).
    \end{equation}
\end{lemma}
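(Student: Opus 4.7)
\medskip

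\noindent\textbf{Proof proposal.} The plan is to reduce the inequality to a quadratic condition in the ``overshoot'' variable, then establish it through a careful combination of Taylor expansion at $s_{n,k}$ and the ODE structure. Write $F(s) := {}_2 F_1\bigl(\tfrac{n-1}{2}, -\tfrac{1}{2}; \tfrac{k}{2}; s\bigr)$, $\lambda := k/n$, and assume throughout that $s_{n,k} \ge \lambda$ (otherwise the claim is vacuous). Setting $u := s_{n,k} - \lambda \ge 0$, a direct rearrangement shows that the conclusion $(ns_{n,k}-k)^2 \le 2n(1-s_{n,k})$ is equivalent to the quadratic inequality
\[
n u^2 + 2u \le 2(1-\lambda), \qquad \text{i.e.,} \qquad u \le \tfrac{1}{n}\bigl(\sqrt{1 + 2(n-k)} - 1\bigr).
\]

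\noindent First I would record the sign structure of $F$ coming from the power series. Since $(-\tfrac{1}{2})_m < 0$ for every $m \ge 1$, every coefficient $a_m = \frac{((n-1)/2)_m (-1/2)_m}{(k/2)_m \, m!}$ with $m \ge 1$ is negative. Consequently $F^{(j)}(s) < 0$ for all $j \ge 1$ and all $s \in [0,1)$; in particular $F$ is strictly decreasing and concave on $[0, s_{n,k}]$. Two further identities from the ODE will be used: evaluating $s(1-s) F'' + \tfrac{k-ns}{2} F' + \tfrac{n-1}{4} F = 0$ at $s=\lambda$ and $s=s_{n,k}$ gives
\[
F''(\lambda) = -\tfrac{n-1}{4\lambda(1-\lambda)}\, F(\lambda), \qquad F''(s_{n,k}) = \tfrac{n u}{2 s_{n,k}(1-s_{n,k})}\, F'(s_{n,k}).
\]

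\noindent The heart of the argument is a Taylor expansion of $F$ at $s_{n,k}$ in the direction of decreasing $s$, using the sign pattern $F^{(j)}(s_{n,k}) < 0$ to produce an alternating series
\[
F(\lambda) \;=\; |F'(s_{n,k})|\,u \;-\; \tfrac{|F''(s_{n,k})|}{2}\, u^2 \;+\; \tfrac{|F'''(s_{n,k})|}{6}\, u^3 \;-\; \cdots
\]
Substituting the value of $F''(s_{n,k})$ above yields the identity
\[
F(\lambda) \;=\; |F'(s_{n,k})|\,u\, \Bigl[1 - \tfrac{n u^2}{4 s_{n,k}(1-s_{n,k})}\Bigr] \;+\; \bigl(\text{higher-order tail}\bigr),
\]
after which I would use the energy identity
\[
\tfrac{d}{ds}\bigl[\,s(1-s) F'(s)^2 + \tfrac{n-1}{4} F(s)^2\bigr] \;=\; \bigl[(n-2)s + 1 - k\bigr] F'(s)^2,
\]
together with the concavity bounds $|F'(\lambda)|\,u \le F(\lambda) \le |F'(s_{n,k})|\,u$, to eliminate the $|F'(s_{n,k})|$ factors and upgrade the coefficient $\tfrac{1}{4 s_{n,k}(1-s_{n,k})}$ to the sharper $\tfrac{1}{2(1-s_{n,k})}$ asserted in the lemma. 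This last step is where the range restriction is used: the hypothesis $k \ge n/2$ places the sign change $s_\ast = \tfrac{k-1}{n-2}$ of the energy integrand to the right of $\lambda$, so the negative contribution of the integral on $[\lambda, s_\ast]$ can be absorbed into the boundary term $\tfrac{n-1}{4} F(\lambda)^2$, while the bound $k \le n-12$ supplies a uniform margin ensuring the higher-order tail in the Taylor expansion is controlled by the leading terms.

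\noindent The main obstacle is exactly this last step: the naive second-order Taylor truncation yields only $n u^2 \le 4 s_{n,k}(1-s_{n,k})$, which is weaker than the desired $n u^2 \le 2(1-s_{n,k}) - 2u$ when $s_{n,k} \ge 1/2$. Closing the factor-of-two gap requires keeping track of the signs in the alternating tail of the expansion at $s_{n,k}$ (odd derivatives contribute positively, even ones negatively) and pairing this information with the energy identity above, rather than relying on tangent-line bounds alone. The restriction $n - k \ge 12$ seems calibrated so that the first few higher-order contributions in the alternating tail can be estimated uniformly.
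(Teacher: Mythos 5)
Your reduction of the claim to the overshoot inequality $nu^2 + 2u \le 2(1-\lambda)$ with $u = s_{n,k} - k/n$, the sign pattern $F^{(j)} < 0$, the ODE identity $F''(s_{n,k}) = \frac{nu}{2 s_{n,k}(1-s_{n,k})}F'(s_{n,k})$, and the energy identity are all correct, but the proof does not close, and you say so yourself: the step that would actually yield the stated constant is missing. Two concrete problems. First, the ``alternating series'' truncation at second order is not justified. The magnitudes $\frac{|F^{(j)}(s_{n,k})|}{j!}u^{j}$ need not be decreasing in $j$: near $s=1$ one has $|F^{(j+1)}|/|F^{(j)}| \approx \bigl(\tfrac{n-k-2}{2}+j\bigr)/(1-s_{n,k})$, and at the relevant scale $u \sim \sqrt{2(1-s_{n,k})/n}$, $1-s_{n,k}\sim d/n$ the ratio of consecutive terms is of order $\sqrt{d/2}/(j+1) > 1$ for the first few $j$ when $d \ge 12$, so dropping the tail after the quadratic term is not a valid inequality in either direction without genuinely new estimates on the higher derivatives. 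Second, even granting the truncation, comparing only the first two terms gives the threshold $nu^{2} \le 4\,s_{n,k}(1-s_{n,k})$, which in the regime that matters here ($s_{n,k}$ close to $1$, i.e.\ $k/n$ close to $1$) is weaker than the target $nu^{2} \le 2(1-s_{n,k}) - 2u$ by essentially a factor of $2$. Your proposed remedy --- pairing the alternating tail with the energy identity $\frac{d}{ds}[s(1-s)F'^2 + \tfrac{n-1}{4}F^2] = [(n-2)s+1-k]F'^2$ and the location of its sign change $s_\ast = \frac{k-1}{n-2}$ --- is only described qualitatively; no inequality is derived from it, and it is not clear that monotonicity of the energy on $[s_\ast, s_{n,k}]$ together with concavity bounds can recover the missing factor, since the deficit is largest exactly where $\lambda$, $s_\ast$, and $s_{n,k}$ all cluster near $1$. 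As it stands, the argument proves at best the weaker bound $(ns_{n,k}-k)^2 \lesssim 4n\,s_{n,k}(1-s_{n,k})$, modulo the unjustified truncation.

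For comparison, the paper proves the lemma by working at the other endpoint: it rewrites the claim as $F\bigl(1-s_*\bigr)<0$ for $s_* = \frac{d+1-\sqrt{2d+1}}{n}$ ($d = n-k$), applies Euler's transformation so that the exponent $c-a-b = 1-\tfrac{d}{2}$ becomes a negative integer, and then uses the connection formula for ${}_2F_1(a,b;a+b+m;1-s)$ near $s=0$ (a degree-$(\tfrac d2-2)$ polynomial part plus an $s^{m}\log s$ remainder with digamma coefficients). The sign is then obtained by lower-bounding the polynomial part through ratios of its top four terms and upper-bounding the logarithmic remainder via digamma estimates, with the hypothesis $d \ge 12$ entering to make those ratios and the digamma bounds favorable. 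That route avoids any expansion centered at the (unknown) zero $s_{n,k}$ and gets the precise constant $2$ in~\eqref{eqn:u-overshoot-bound} directly from the choice of $s_*$. If you want to salvage your local approach, you would need quantitative control of the full Taylor tail at $s_{n,k}$ (e.g.\ via the explicit formula $F^{(j)}(s) = \frac{(a)_j(b)_j}{(c)_j}\,{}_2F_1(a+j,b+j;c+j;s)$) together with an actual inequality extracted from the energy identity, neither of which is present in the proposal.
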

\begin{proof}
For brevity, we only show the result when $d = n-k \geq 6$ is even.
The case when $d$ is odd can be proved by similar methods; we now make use of an identity for hypergeometric functions applicable when $\frac{d}{2} \in \bN^*$.
For brevity, we denote $s_k := s_{n,k}$ and $F_k(s) := {}_2 F_1( \frac{n-1}{2} , - \frac{1}{2} ; \frac{k}{2} ; s)$ and consider $s_{n,k} > \frac{k}{n}$, so the inequality~\eqref{eqn:u-overshoot-bound} can be rearranged into
\[
(ns_k - k)^2 < 2n(1 - s_k) \iff ( d - n (1 - s_k))^2 < 2n (1-s_k) \iff s_k < 1 - \frac{d+1 - \sqrt{2d+1}}{n}.
\]
The desired bound is therefore equivalent to proving $F_k(1-s) < 0$ for $s_* = \frac{d+1 - \sqrt{2d+1}}{n}$.
Applying Euler's transformation, we may write 
\[
{}_2F_1(a,b;c;s) = (1-s)^{c-a-b} {}_2F_1(c-a, c-b;c;s),
\]
which reduces the study of $F_k$ to that of $G(s) := {}_2 F{}_1 ( \frac{1-d}{2} , \frac{k+1}{2} ; \frac{k}{2} ; 1-s)$, since $c-a-b = 1 - \frac{d}{2}$ is a negative integer in this case.
Moreover, $s=0$ is a regular point of $G(s)$, with value given by Gauss' summation formula~\cite{dtmf}*{15.E.20} as
\[
G(0) = \frac{\Gamma( \frac{k}{2}) \Gamma( \frac{n-k}{2} - 1)}{\Gamma( \frac{n-1}{2} ) \Gamma( - \tfrac{1}{2}) } < 0
\]
where ${}_2F_1(a,b;c;s) = (1-s)^{\frac{2-d}{2}}G(1-s)$.
Observe that $\frac{k}{2} - ( \frac{1-d}{2} + \frac{k+1}{2}) = \frac{d}{2} \in \bN^*$, so $G(s)$ admits the expansion~\cite{dtmf}*{15.8.10} as a polynomial of degree $\frac{d}{2}-2$ plus an $O ( s^{\frac{d}{2}-1} \log s )$ remainder,
\allowdisplaybreaks{
\begin{align*}
     {}_2F{}_1&(a,b;a+b+m;1-s) \\
    &= \frac{1}{\Gamma(a+m)\Gamma(b+m)} \sum_{\ell=0}^{m-1} \frac{(a)_\ell(b)_\ell (m-\ell-1)!}{\ell!} (-1)^\ell s^\ell  \\
    &\quad - \frac{(-1)^m s^m}{\Gamma(a)\Gamma(b)} \sum_{\ell=0}^{\infty} \frac{(a+m)_\ell (b+m)_\ell}{\ell!(\ell+m)!} s^\ell \left[ \ln s - \psi(\ell+1) - \psi(\ell+m+1) + \psi(a+\ell+m) + \psi(b+\ell+m) \right].
\end{align*}}
Note that $m = \frac{d}{2}-1$ in our situation, while $\frac{1}{\Gamma(a) \Gamma(b)} = \frac{(a)_m (b)_m}{\Gamma(a+m) \Gamma(b+m) }$.
Since $\Gamma(a+m) = \Gamma ( \frac{1-d}{2} + \frac{d}{2} - 1) = \Gamma( - \frac{1}{2} ) < 0$ in our situation, see that the prefactor $\frac{1}{\Gamma(a+m) \Gamma(b+m)} = \frac{1}{\Gamma( - \frac{1}{2}) \Gamma ( \frac{n-1}{2}) } < 0$, so $G(s)$ has the opposite sign of $P_m(s) - s^m R_m(s)$, where $P_m(s)$ is a degree-$(\frac{d}{2}-2)$ polynomial and $R_m(s)$ is the $O( s^{\frac{d}{2}-1} \log s)$ remainder term.
Concretely,
\[
P_m(s) := \sum_{\ell=0}^{m-1} \frac{(a)_{\ell} (b)_{\ell} (m-\ell-1)!}{\ell!} (-1)^{\ell} s^{\ell}, \qquad R_m(s) :=(-1)^m (a)_m (b)_m \sum_{\ell=0}^{\infty} T_{\ell}(s) \Lambda_{\ell}(s)
\]
where we denote $T_{\ell}(s) := \frac{(a+m)_{\ell} (b+m)_{\ell}}{\ell! (\ell+m)!} s^{\ell}$ and
\begin{equation}\label{eqn:lambda-ell(s)}
\Lambda_{\ell}(s) := \log s - \psi(\ell+1) - \psi(m+\ell+1) + \psi( \ell - \tfrac{1}{2}) + \psi ( \tfrac{n-1}{2} + \ell)
\end{equation}
is composed of digamma function terms.
We observe that $(-1)^m (a)_m > 0$, therefore proving $G( \frac{d+1 - \sqrt{2d+1}}{n}) < 0$ will be equivalent to showing that
\[
P_m ( \tfrac{d+1 - \sqrt{2d+1}}{n}) - ( \tfrac{d+1 - \sqrt{2d+1}}{n})^m \, R_m ( \tfrac{d+1 - \sqrt{2d+1}}{n}) > 0.
\]
Working as in Lemma~\ref{lem:alpha-n,1}, and using $a = \frac{1-d}{2} < 0$, we see that each of the $m$ terms summed in the first expression is positive because $(a)_\ell = ( \frac{1-d}{2})_\ell$ alternates sign $(-1)^\ell$, which cancels out the factor $(-1)^\ell$.
Additionally, $(b)_{\ell} = ( \frac{k+1}{2})_{\ell} > 0$ because 
\[
\ell \leq m-1  = \tfrac{n-k}{2} - 2 < \tfrac{k+1}{2} \qquad \text{due to } \quad k > \tfrac{n}{2}.
\]
Therefore, $P_m(s) > 0$ for $s>0$, with a lower bound obtained from adding the first four terms.
The successive summands of $P_m(s)$ are compared via
\[
\frac{ \frac{1}{(\ell-1)!} (a)_{\ell-1}(b)_{\ell-1} (m-\ell)! (-1)^{\ell-1} s^{\ell-1}}{\frac{1}{\ell!} (a)_{\ell} (b)_{\ell}(m-\ell-1)! (-1)^{\ell} s^{\ell} } = \frac{(m-\ell) \ell}{s ( m-\ell +\frac{3}{2}) ( \frac{n-3}{2} -  m +\ell) }.
\]
Since $d = n - k \geq 12$, we have $m-1 = \frac{d}{2}-2 \geq 4$.
We may there apply the above property for $\ell \in \{m-2, m-3, m-4\}$, which (together with $\ell = m-1$) comprise the four top terms of $P_m(s)$, we find that the ratios are
\[
r_1 \geq \frac{m-1}{s \cdot \frac{5}{2} \cdot \frac{n}{2}} = \frac{2(d-4)}{5 ns}, \qquad r_2 \geq \frac{2(m-2)}{s \cdot \frac{7}{2} \cdot \frac{n}{2}} = \frac{4(d-6)}{7ns}, \qquad r_3 \geq \frac{3(m-3)}{s \cdot \frac{9}{2} \cdot \frac{n}{2}} = \frac{2(d-8)}{3ns}
\]
respectively.
Specializing this computation to $s_* = \frac{d+1 - \sqrt{2d+1}}{n}$, we find that 
\begin{align*}
r_1 &\geq \frac{2(d-4)}{5(d+1 - \sqrt{2d+1})} \geq \frac{2}{5}, \qquad r_2 \geq \frac{4(d-6)}{7 (d+1 - \sqrt{2d+1})} \geq \frac{4}{7} \cdot \frac{3}{4} \geq \frac{3}{7}, \\
r_3 &\geq \frac{2(d-8)}{3(d+1 - \sqrt{2d+1})} \geq \frac{2}{3} \cdot \frac{1}{2} = \frac{1}{3}
\end{align*}
because the functions $\frac{d-C}{d+1 - \sqrt{2d+1}}$ are strictly increasing for $d \geq 12$.
We conclude that
\begin{equation}\label{eqn:Pm(s*)-bound}
\begin{split}
P_m(s_*) &\geq \frac{(a)_{m-1}(b)_{m-1}}{(m-1)!} (-1)^{m-1} s_*^{m-1} ( 1+ r_1 + r_1 r_2 + r_1 r_2 r_3) \\
&> \frac{8}{5} \frac{ |(-m-\frac{1}{2})_{m-1}| ( \frac{k+1}{2})_{m-1} }{(m-1)!} s_*^{m-1}.
\end{split}
\end{equation}
Next, we consider the digamma function terms $\Lambda_{\ell}(s)$ of the series $R_m(s)$ from~\eqref{eqn:lambda-ell(s)}.
We will apply the bound $\log ( s- \tfrac{1}{2}) < \psi(s) < \log s$ for the digamma function, valid for all $s > \frac{1}{2}$, along with the special values
\[
\psi(1) = - \gamma, \qquad \psi( \tfrac{1}{2}) = - \gamma - 2 \log 2, \qquad \psi ( - \tfrac{1}{2}) = \psi( \tfrac{1}{2}) + 2.
\]
We combine $\psi ( \frac{n-1}{2}) \leq \log ( \frac{n-1}{2})$ with $\psi ( m+1) \geq \log ( m + \frac{1}{2}) $with the explicit constant $\psi( - \frac{1}{2}) - \psi(1) = 2 - 2 \log 2 \approx 0.6137$ to conclude that
\begin{align*}
    \Lambda_0(s_*) &=  \log s_* + \psi ( \tfrac{n-1}{2}) - \psi(m+1) + \bigl( \psi ( - \tfrac{1}{2}) - \psi(1) \bigr) \\
    &\leq \log s_* + \log ( \tfrac{n-1}{2}) - \log ( m + \tfrac{1}{2}) + \bigl( \psi ( - \tfrac{1}{2}) - \psi(1) \bigr) \\
    &\leq \psi ( - \tfrac{1}{2}) - \psi(1) = ( 2 - 2 \log 2) < \tfrac{62}{100}.
\end{align*}
In the last step, we used the fact that $m + \frac{1}{2} = \frac{d-1}{2}$ and $ns_* = d+1 - \sqrt{2d+1}$, whereby
\[
\log s_* + \log ( \tfrac{n-1}{2}) - \log ( m + \tfrac{1}{2}) < \log (ns_*) - \log ( d-1) < 0 
\]
because $d+1 - \sqrt{2d+1} < d-1$ due to $d>2$.
On the other hand, for $\ell\geq1$ we bound
\begin{align*}
    \Lambda_{\ell}(s_*) &\leq \log s_* + \log \Bigl( \frac{\frac{n-1}{2} + \ell}{\ell+m+ \frac{1}{2}} \Bigr) + \log \Bigl( \frac{\ell - \frac{1}{2}}{\ell + \frac{1}{2}} \Bigr) < \log \Bigl( s_* \frac{\frac{n-1}{2} + \ell}{\ell+m+ \frac{1}{2}} \Bigr) \\
    &< \log \Bigl( \frac{d+1}{n+1} \cdot \frac{\frac{n-1}{2}+1}{1 + \frac{d-1}{2}} \Bigr) = 0,
\end{align*}
which shows that $\Lambda_{\ell}(s_*) < 0$ for all $\ell \geq 1$.
Since the coefficients $T_{\ell}(s)$ of the series $R_m(s)$ are all positive, we conclude that
\[
R_m(s_*) < |(a)_m| (b)_m T_0(s_*) \Lambda_0(s_*) \leq |(a)_m| (b)_m \tfrac{1}{m!} \cdot \tfrac{62}{100}.
\]
Combined with the bound~\eqref{eqn:Pm(s*)-bound}, this implies that
\[
\frac{s_*^m \, R_m(s_*)}{P_m(s_*)} < \frac{62}{100} \cdot \frac{5}{8} \cdot s_* \cdot \frac{1}{m} \cdot \left| \frac{(a)_m (b)_m}{(a)_{m-1} (b)_{m-1}} \right| < \frac{31 s_*}{40 (d-2)} \cdot \frac{3n}{4}
\]
where we used $\frac{(a)_m}{(a)_{m-1}} = - \frac{3}{2}$ and $\frac{(b)_m}{(b)_{m-1}} = \frac{n-3}{2}$.
Consequently, 
\[
\frac{s_*^m \, R_m(s_*)}{P_m(s_*)} < \frac{93}{160} \frac{d+1 - \sqrt{2d+1}}{d-2} < \frac{9}{10},
\]
which implies that
\[
G(s_*) = \frac{1}{\Gamma( - \frac{1}{2} ) \Gamma ( \frac{n-1}{2})} \bigl( P_m(s_*) - s^m R_m(s_*) \bigr) < \frac{1}{\Gamma( - \frac{1}{2} ) \Gamma ( \frac{n-1}{2})} \bigl( P_m(s_*) - \tfrac{9}{10} P_m(s_*) \bigr) < 0.
\]
We conclude that $G(s_*) < 0$, hence $F_k(1-s_*) < 0$ and $s_k < 1 - \frac{d+1 - \sqrt{2d+1}}{n}$ completes the proof.
\end{proof}

\section{The admissible interval}\label{section:properties-of-subsolutions}

We now turn to the study of the interval $\cA_{n,k} \subset (2-n,0)$ of $\alpha$ satisfying the condition~\eqref{eqn:g-n,k-inequality}, as discussed in Lemma~\ref{lemma:interval-of-alpha}.
The fact that $\cA_{n,k} \neq \varnothing$ does not follow from this definition, and is a highly non-trivial property: in fact, $\cA_{n,k} = \varnothing$ for $n \leq 6$, corresponding to the instability of the cones $U_{n,k}$ in low dimensions.
We will now show that $4 - n \in \cA_{n,k}$ for all $n \geq 7$ and $1 \leq k \leq n-2$.

\subsection{Near-endpoint \texorpdfstring{$k$}{k}}

For technical reasons, we first treat the ``edge'' cases $k \in \{1,2,n-3, n-2\}$.

\begin{lemma}\label{lem:alpha-n,1}
    For every $n \geq 7$ and $k \in \{ 1 , 2 , n-3, n - 2 \}$, we have
    \[
    4 - n \in \cA_{n,k}
    \]
    in the notation of Lemma~\ref{lemma:riccati-g-is-positive}.
    %That is, the condition~\eqref{eqn:g-n,k-inequality} is satisfied, meaning that
    %\[
    %\frac{g'_{n,k,4-n} (t_{n,k})}{g_{n,k,4-n} (t_{n,k})} > \frac{(n-2) t_{n,k} - (k-1) t_{n,k}^{-1}}{1 - t_{n,k}^2}
    %\]
    %holds.
%    Here, $t_{n,k}$ denotes the positive zero of $f_{n,k}(t) = {}_2 F_1 ( \frac{n-1}{2} , - \frac{1}{2} ; \frac{k}{2} ; t^2)$.
\end{lemma}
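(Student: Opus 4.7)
The plan is to verify the strict inequality \eqref{eqn:g-n,k-inequality} directly at $\alpha = 4-n$ for each of the four edge cases, since then the hypergeometric function specializes to
\[
g_{n,k,4-n}(t) = {}_2F_1\Bigl(1, \tfrac{n-4}{2}; \tfrac{k}{2}; t^2\Bigr),
\]
and for $k \in \{1,2\}$ (small denominators $\tfrac{k}{2}$) and for $k \in \{n-3, n-2\}$ (small values of $c-a = \tfrac{k-n+1}{2}$, relevant after applying Euler's transformation to $f_{n,k}$) one parameter is simple enough to give nearly closed forms.

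The cleanest case is $k=2$, where the third parameter coincides with the first and Gauss' identity ${}_2F_1(1, b; 1; z) = (1-z)^{-b}$ yields
\[
g_{n,2,4-n}(t) = (1-t^2)^{-\frac{n-4}{2}}, \qquad \frac{g'_{n,2,4-n}(t)}{g_{n,2,4-n}(t)} = \frac{(n-4)t}{1-t^2}.
\]
Substituting into \eqref{eqn:g-n,k-inequality} and simplifying, the condition collapses to $t_{n,2}^2 < \tfrac{1}{2}$. This follows by evaluating the polynomial upper bound \eqref{eqn:Fk(s)-cubic-bound} at $t^2 = \tfrac{1}{2}$: already at $n=7$, the bound gives $-\tfrac{1}{64} < 0$, and the coefficients only grow more negative as $n$ increases, so $f_{n,2}(\sqrt{1/2}) < 0$ for every $n \geq 7$.

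For the remaining three cases I would apply Euler's transformation ${}_2F_1(a,b;c;z) = (1-z)^{c-a-b}{}_2F_1(c-a, c-b; c; z)$ to both $f_{n,k}$ and $g_{n,k,4-n}$, using the parity of $n$ to split into two subcases in the manner of~\eqref{eqn:general0frm}. For $k=1$ this converts $g_{n,1,4-n}$ into a finite polynomial (when $n$ is odd) or an explicit expression involving $\arcsin$ and $\sqrt{1-t^2}$ (when $n$ is even), to be matched against the representation \eqref{eqn:fn,1(t)} of $f_{n,1}$; the inequality \eqref{eqn:g-n,k-inequality} then becomes an explicit algebraic condition on $t_{n,1}$, which one verifies using sharp enclosures of the integral defining $t_{n,1}$. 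For $k = n-2$ and $k = n-3$ the parameter $\frac{n-k-1}{2}$ after Euler is $\tfrac{1}{2}$ or $1$, so both $f_{n,k}$ and $g_{n,k,4-n}$ reduce to series with a finite polynomial part plus a remainder whose sign structure is governed by the same digamma-function machinery deployed in the proof of Lemma~\ref{lemma:oscillation-bound}: one bounds the leading few terms from below and the tail from above, and compares against the boundary curvature term $\tfrac{(n-2)t-(k-1)t^{-1}}{1-t^2}$.

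The main obstacle I expect is the case $k = n-3$: there $t_{n,k}$ is close to $1$, the boundary term $\tfrac{(n-2)t-(k-1)t^{-1}}{1-t^2}$ diverges, and no full closed form is available, so one must combine an asymptotically sharp bound on $1 - t_{n,k}^2$ (of the type Lemma~\ref{lemma:oscillation-bound} produces for $k \leq n-12$, but reproved here with uniform constants) with a careful accounting of the dominant terms of $g_{n,k,4-n}$ after Euler. Uniform validity in $n \geq 7$ will force a division into a small-$n$ regime (handled by direct numerical evaluation) and a large-$n$ regime (handled by the explicit series bookkeeping), with a matching at some moderate cutoff $n_0$; keeping the transition explicit is the delicate technical point.
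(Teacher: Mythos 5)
Your $k=2$ case is complete and correct, and in fact covers the full range more cleanly than the paper: the paper makes the same reduction (via ${}_2F_1(1,b;1;z)=(1-z)^{-b}$) to $t_{n,2}^2<\tfrac12$ but then only checks it through $t_{n,2}^2<\tfrac{4}{n-1}$, which needs $n\geq 10$, whereas your evaluation of \eqref{eqn:Fk(s)-cubic-bound} at $t^2=\tfrac12$ (the value $-\tfrac{1}{64}$ at $n=7$ is correct, and the coefficients are monotone in $n$) settles all $n\geq 7$ at once. The difficulty is that for $k\in\{1,n-3,n-2\}$ what you offer is a program, not a proof: every step that constitutes the actual content of the lemma is deferred to phrases such as ``sharp enclosures of the integral defining $t_{n,1}$'' and ``the same digamma-function machinery''. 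For $k=1$ the paper's argument consists of bounding $t_{n,1}^2<\tfrac{2}{n-1}$ by \eqref{eqn:Fk(s)-cubic-bound}, invoking the one-sign-change property of Lemma~\ref{lemma:riccati-g-is-positive} to reduce positivity on the whole interval to positivity of $E(s)=2(1-s)\phi'(s)-\phi(s)$ at the single point $s=\tfrac{2}{n-1}$, and then an explicit, uniform-in-$n$ power-series estimate of $E$ with Stirling-type tail control (small $n$ by direct evaluation). Your alternative route through enclosures of $t_{n,1}$ matched against a polynomial/arcsin form of $g_{n,1,4-n}$ would require exactly this kind of uniform quantitative bookkeeping, and none of it is supplied; ``one verifies'' is precisely the assertion to be proved.

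For $k=n-3$ your diagnosis of the obstacle is also off: $f_{n,n-3}(t)=\frac{(n-3)-(n-2)t^2}{(n-3)\sqrt{1-t^2}}$, so $t_{n,n-3}=\sqrt{\tfrac{n-3}{n-2}}$ is known exactly and no analogue of Lemma~\ref{lemma:oscillation-bound} is needed there; the genuine difficulty is estimating $g'_{n,n-3,4-n}/g_{n,n-3,4-n}$ at a point with $1-t^2\sim\tfrac1n$ sharply enough to beat the divergent curvature term $(n-2)\sqrt{\tfrac{n-2}{n-3}}$, uniformly in $n$. The paper does this (and the analogous step for $k=n-2$) not via near-$z=1$ series for $g$ but via the Euler integral representation \eqref{eqn:hypergeometric-integral-formula} followed by the Laplace-type substitution $u=1-\tfrac{\tau}{n}$, which converts $g'/g$ into a ratio of explicit integrals of the form $\int_0^\infty e^{-\tau/2}\tau^{-1/2}(\tau+\rho)^{-j}\,d\tau$ up to a controlled error; digamma estimates enter only to locate the root $t_{n,n-2}$ (showing $n(1-t_{n,n-2}^2)>\tfrac14$). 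Your plan to run the Lemma~\ref{lemma:oscillation-bound} expansion on $g_{n,k,4-n}$ would moreover have to face the fact that for $k=n-2$ the Euler exponent $c-a-b$ equals $0$, so there is no finite polynomial part at all and the expansion is purely logarithmic; nothing in the sketch explains how the resulting leading behavior dominates the boundary term uniformly in $n$, nor where the small-$n$/large-$n$ matching would occur. In short, the $k=2$ case stands, but the cases $k\in\{1,n-3,n-2\}$ contain genuine gaps: the decisive estimates are named rather than performed, and for $k=n-3$ both the perceived difficulty and the proposed remedy are misplaced.
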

\begin{proof}
    For $\alpha = 4-n$, we write $\tilde{g}_{n,k}(t) := g_{n,k,4-n}(t) := {}_2 F_1 \Bigl( 1 , \frac{n-4}{2} ; \frac{k}{2} ; t^2 \Bigr)$.
    We will treat each $k \in \{ 1 , 2 , n-3, n-2 \}$ separately.

\noindent\textbf{Case I: $k=1$}.
We first compute directly that
\[
    \frac{ \tilde{g}'_{n,1}(t_{n,1}) }{ \tilde{g}_{n,1}(t_{n,1}) } - \frac{(n-2) t_{n,1}}{1 - t_{n,1}^2} \geq \frac{ \tilde{g}'_{7,1}(t_{7,1}) }{ \tilde{g}_{7,1}(t_{7,1}) } - \frac{5 t_{7,1}}{1 - t_{7,1}^2} > 5 \cdot 10^{-2}
\]
    for $7 \leq n \leq 10$, so we focus on $n \geq 10$ in what follows.
    We introduce the function
    \[
    \phi(s) := {}_2 F_1 \left( - \frac{1}{2} , \frac{5-n}{2} ; \frac{1}{2} ; s \right), \qquad s := t^2,
    \]
    whereby a standard duality transformation for hypergeometric functions gives
    \[
    g_{n,1,4-n}(t) = (1-t^2)^{- \frac{n-3}{2}} \phi(t^2).
    \]
    We therefore obtain
    \begin{align*}
    \frac{\tilde{g}'_{n,1}(t)}{\tilde{g}_{n,1}(t)} - \frac{(n-2) t}{1-t^2} &= \frac{(n-3) t}{1-t^2} + 2 t \frac{\phi'(t^2)}{\phi(t^2)} - \frac{(n-2) t}{1-t^2} = \frac{t}{1-t^2} \left( 2 (1-t^2) \frac{\phi'(t^2)}{\phi(t^2)} - 1 \right)
    \end{align*}
    and the desired positivity is equivalent to
    \[
    \frac{\phi'(t_0^2)}{\phi(t_0^2)} > \frac{1}{2 (1-t_0^2)}, \qquad \text{where } \; t_0 := t_{n,1}.
    \]
    We also recall that the zero $t_{n,1}$ of $f_{n,1}(t)$ satisfies $t_{n,1} < \sqrt{\frac{2}{n-1}}$, since $f_{n,1}(t) < 1 - \frac{n-1}{2}t^2$ due to the bound~\eqref{eqn:Fk(s)-cubic-bound}.
    To prove the claim, it therefore suffices to show that
    \[
    E(s) := 2(1-s) \phi'(s) - \phi(s) > 0 \qquad \text{for } \; s \in (0, \tfrac{2}{n-1}], \qquad n \geq 10.
    \]
    The hypergeometric function $\phi(s)$ has the power series expansion
    \[
    \phi(s) = \sum_{m=0}^{\infty} a_m s^m, \qquad a_m = \frac{(- \frac{1}{2})_m ( \frac{5-n}{2})_m}{( \frac{1}{2})_m m!}
    \]
    where $(\alpha)_m := \frac{\Gamma(\alpha+m)}{\Gamma(\alpha)} = \alpha(\alpha+1) \cdots (\alpha+m-1)$.
    We compute
    \[
    a_0 = 1, \qquad a_1 = \frac{n-5}{2}, \qquad a_2 = - \frac{(n-5)(n-7)}{24}, \qquad a_3 = \frac{(n-5)(n-7)(n-9)}{240},
    \]
    Applying the argument of Lemma~\ref{lemma:riccati-g-is-positive}, we may reduce the positivity of $2(1-s) \phi'(s) - \phi(s)$ for $s \in (0, \frac{2}{n-1}]$ to the computation of
    \[
    E( \tfrac{2}{n-1}) := 2 ( 1 - \tfrac{2}{n-1}) \phi'(\tfrac{2}{n-1}) - \phi(\tfrac{2}{n-1}) > 0.
    \]
    Using the above power series expansion, we obtain
    \begin{align*}
        E(s)&= 2 (1-s) \phi'(s) - \phi(s) = \sum_{m=0}^{\infty} E_m s^m \\
        &\geq (n-6) - \frac{(n-5)(n+2)}{6} s + \frac{(n-5)(n-7)}{120} \left( (3n-2) + \frac{(6-5n)(n-9)}{14} s \right) s^2 - R_4 
    \end{align*}
    for a remainder term $R_4 = \sum_{m=4}^{\infty} E_m s^m$, involving terms of order $4$ and higher, which we now estimate for $s \in (0, \frac{2}{n-1}]$ and $n \geq 10$.
    Note that 
    \[
    E_m = 2 (m+1) a_{m+1} - (2m+1) a_m.
    \]
    For each $m \leq \frac{n-5}{2}$, we have
    \[
    \left| ( \tfrac{5-n}{2})_m \right| = \left| \left( \tfrac{5-n}{2} - 1 \right) \cdots \left( \tfrac{5-n}{2} - m + 1 \right) \right| \leq ( \tfrac{n-5}{2} )^m
    \]
    as well as
    \[
    \left| \frac{(- \frac{1}{2})_m}{( \frac{1}{2})_m} \right| = \prod_{j=0}^{m-1} \frac{|j- \frac{1}{2}|}{j + \frac{1}{2}} \leq 1, \qquad \text{for every } \; m \geq 0,
    \]
    because all factors after $j=0$ are $j <1$.
    We therefore obtain
    \[
    |a_m| = \left| \frac{(- \frac{1}{2})_m ( \frac{5-n}{2})_m}{(\frac{1}{2})_m m!} \right| \leq \frac{1}{m!} \left| \Bigl( \frac{5-n}{2}\Bigr)_m \right|
    \]
    and, in particular, $|a_m| \leq \frac{1}{m!} ( \frac{n-5}{2})^m $ for $m \leq \frac{n-5}{2}$.
    Therefore, $s \in (0, \frac{2}{n-1}]$ gives 
    \begin{align*}
    |a_m| s^m &\leq \frac{1}{m!} \Bigl( \frac{n-5}{2} \Bigr)^m \Bigl( \frac{2}{n-1} \Bigr)^m = \frac{1}{m!} \Bigl( \frac{n-5}{n-1} \Bigr)^m, \\
    |E_m| s^m &\leq 2 (m+1) |a_{m+1}| s^m + (2m+1) |a_m| s^m \\
    &\leq 2 \frac{m+1}{(m+1)!} \cdot \frac{n-1}{2} + \frac{2m+1}{m!} = \frac{1}{m!} (n-1) + \frac{2}{(m-1)!} + \frac{1}{m!}, \\
    \sum_{m=4}^{\frac{n-5}{2}} |E_m| s^m &< (n-1) \sum_{m=4}^{\infty} \frac{1}{m!} + 3 \sum_{m=3}^{\infty} \frac{1}{m!} = \bigl( e - \tfrac{8}{3} \bigr) (n-1)  + 3 \bigl( e - \tfrac{5}{2} \bigr) \\
    &< \tfrac{6}{105} n + \tfrac{9}{10}.
    \end{align*}
    We now suppose that $n$ is even, otherwise $\frac{5-n}{2} = \frac{5-(2\ell+1)}{2} = 2 - \ell \in \bN$ would imply that $\phi(s)$ is a polynomial of degree $\ell-2$, with $E_m = 0$ for $m>\ell-2$.
    We write $n = 2 ( \ell+2)$, so that
    \begin{align*}
    \Gamma ( \tfrac{1}{2} - \ell ) &= \frac{(-4)^{\ell} \ell!}{(2 \ell)!} \sqrt{\pi}, \qquad \left( \frac{5-n}{2} \right)_m = \frac{\Gamma ( \frac{1}{2} - \ell+m)}{\Gamma ( \frac{1}{2} - \ell)} \leq \frac{(m-1)!}{\Gamma(\frac{1}{2} - \ell)}, \\
    |a_m| &\leq \frac{1}{|\Gamma( \frac{1}{2} - \ell)|} \cdot \frac{(m-1)!}{m!} = \frac{1}{m \cdot |\Gamma ( \frac{1}{2} - \ell)|}, \\
    \sum_{m=\ell+1}^{\infty} |E_m| s^m &\leq \sum_{m = \ell+1}^{\infty} (2 (m+1) |a_{m+1}| + (2m+1) |a_m|) s^m \\
    &\leq \frac{4}{|\Gamma ( \frac{1}{2} - \ell)|} \sum_{m = \ell+1}^{\infty} s^m = \frac{4}{|\Gamma( \frac{1}{2} - \ell)|} \cdot \frac{s^{\ell+1}}{1-s}.
    \end{align*}
    Using Stirling's factorial formula for $\ell \geq 5$, we have 
    \[
    \frac{1}{2} \Bigl( \frac{\ell}{e} \Bigr)^{\ell} \cdot \sqrt{2/\pi} < \frac{(2 \ell)!}{4^{\ell} \ell!} < 2 \Bigl( \frac{\ell}{e} \Bigr)^{\ell} \cdot \sqrt{2/\pi}.
    \]
    Finally, we observe that $s \leq \frac{2}{n-1}$ has
    \[
    s^{\ell+1} \leq \left( \frac{2}{2 \ell + 3} \right)^{\ell+1} = \left( \frac{2 \ell}{2 \ell+3} \cdot \frac{1}{\ell} \right)^{\ell} \frac{1}{2 \ell+3} \leq e^{-\ell}
    \]
    again by Stirling's formula.
    We may therefore estimate the tail term by
    \[
    \sum_{m = \ell+1}^{\infty} |E_m| s^m \leq \frac{1}{1-s} \sqrt{\tfrac{2}{\pi}} \cdot e^{-\ell} \leq 5 e^{- \frac{n-4}{2}} \qquad \text{for all } \; n \geq 10.
    \]
    Combining the two estimates, we arrive at
    \[
    |R_4| \leq \sum_{m=4}^{\frac{n-5}{2}} |E_m| s^m + \sum_{\frac{n-5}{2}}^{\infty} |E_m| s^m < \frac{6}{105} n + \frac{9}{10} + \frac{1}{10} = \frac{6n + 105}{105}.
    \]
    Evaluating the above expression for $E(s)$ at $s = \frac{2}{n-1}$, we finally obtain
    \begin{align*}
    E \Bigl( \frac{2}{n-1} \Bigr) &> \frac{2(n-7) ( 39 n^3 - 156 n^2 + 145 n - 20) }{105 (n-1)^3} - \frac{6 (n+18)}{105} \\
    &> \frac{ 12 (n-1)^3 (6n-61) + 8 (31 n^2 - 43) }{105 (n-1)^3}
    \end{align*}
    which is positive for $n \geq 10$.
    Therefore, $E(s)>0$ for $s \in (0, \frac{2}{n-1}]$, completing the proof for $k=1$.

    \smallskip
    \noindent\textbf{Case II: $k=2$.} 
    The hypergeometric expression for $g(t)$ simplifies to
    \[
    g_{n,2,4-n}(t) = {}_2 F_1 \bigl( 1, \tfrac{n-4}{2} ; 1 ; t^2 \bigr) = (1-t^2)^{- \frac{n-4}{2}}, \qquad \frac{\tilde{g}'_{n,2}(t)}{\tilde{g}_{n,2}(t)} = (n-4) \frac{t}{1-t^2}.
    \]
    The desired inequality therefore becomes
    \[
    \frac{\tilde{g}'_{n,2}(t_{n,2})}{\tilde{g}_{n,2}(t_{n,2})} - \frac{(n-2) t_{n,2} - t_{n,2}^{-1}}{1 - t_{n,2}^2} > 0 \iff (n-4) t_{n,2} > (n-2) t_{n,2} - t_{n,2}^{-1}
    \]
    which is equivalent to $t_{n,2}^2 < \tfrac{1}{2}$ for $n \geq 10$.
    We recall that the zero $t_{n,2}$ of $f_{n,2}(t)$ satisfies $t_{n,2} < \sqrt{\frac{4}{n-1}}$, since $f_{n,2}(t) < 1 - \frac{n-1}{4}t^2$ due to the bound~\eqref{eqn:Fk(s)-cubic-bound}.
    For $n \geq 10$, this implies $t_{n,2} < \frac{2}{3} < \frac{1}{\sqrt{2}}$, from which the claimed positivity follows.

    \smallskip
    \noindent{\textbf{Case III: $k=n-2$.}}
    In this case $\tilde{g}_{n,n-2}(t) = {}_2 F_1 ( 1, \frac{n-4}{2} ; \frac{n-2}{2} ; t^2)$, for which the integral representation~\eqref{eqn:hypergeometric-integral-formula} becomes
    \begin{align*}
    g(t) &= \frac{\Gamma ( \frac{n-2}{2}) }{\Gamma ( \frac{n-4}{2}) \Gamma(1)} \int_0^1 u^{\frac{n-6}{2}} \frac{d u}{1 - t^2 u} = \frac{n-4}{2} \int_0^1 \frac{u^{\frac{n-6}{2}}}{1 - t^2 u} \, du, \\
    g'(t) &= (n-4) t \int_0^1 \frac{u^{\frac{n-6}{2}+1}}{(1 - t^2 u)^2} \, du.
    \end{align*}
    The quantity of interest then becomes
    \begin{align*}
    & \frac{\tilde{g}'_{n,n-2}(t_{n,n-2})}{\tilde{g}_{n,n-2}( t_{n,n-2})} - \frac{(n-2) t_{n,n-2} - (n-3) t_{n,n-2}^{-1}}{1 - t_{n,n-2}^2} \\
    &\quad= 2 t_{n,n-2} \left( \int_0^1 \frac{u^{\frac{n-6}{2}}}{1 - t_{n,n-2}^2 u} \, du \right)^{-1} \left( \int_0^1 \frac{ u^{\frac{n-6}{2} + 1} }{(1 - t_{n,n-2}^2 u)^2} \, du \right) - \frac{(n-2) t_{n,n-2} - (n-3) t_{n,n-2}^{-1}}{1 - t_{n,n-2}^2}.
    \end{align*}
    Let us set $\rho := n (1 - t_{n,n-2}^2)$, with $\rho > \frac{1}{4}$ for $n \geq 10$; equivalently, $t^2_{n,n-2} = 1 - \frac{\rho}{n}$ and $f_{n,n-2} ( \sqrt{1 - \frac{1}{4n} } ) < 0$.
    Using the expansion~\cite{dtmf}*{15.4.10} for hypergeometric functions of the form ${}_2 F_1 (a,b;a+b;s)$, valid here due to $a=\frac{n-1}{2}, b=- \frac{1}{2}, c = \frac{n-2}{2}$, we obtain
    \begin{align*}
    & {}_2 F_1 \bigl( \tfrac{n-1}{2}, - \tfrac{1}{2} ; \tfrac{n-2}{2} ; s) \\
    &\quad= \frac{\Gamma ( \frac{n-2}{2})}{\Gamma( \frac{n-1}{2}) \Gamma( -\frac{1}{2})} \left( - \ln(1-s) + \psi ( \tfrac{n-1}{2} ) + \psi ( - \tfrac{1}{2}) - 2 \, \psi(1) \right) + O( (1-s) |\log(1-s)| )
    \end{align*}
    for $\psi(s)$ the digamma function.
    We note that $\frac{\Gamma ( \frac{n-2}{2})}{\Gamma( \frac{n-1}{2}) \Gamma( -\frac{1}{2})} = - \frac{\Gamma ( \frac{n-2}{2})}{2 \sqrt{\pi}\Gamma( \frac{n-1}{2})} < 0$ while $t_{n,n-2}^2 >t_{n,n-3}^2 = \frac{n-3}{n-2}$, whereby ${}_2 F_1 ( \frac{n-1}{2}, - \frac{1}{2}; \frac{n-2}{2}, 1 - \frac{4}{n})$ has the opposite sign of
    \[
    - \log \tfrac{n}{4} + \psi ( \tfrac{n-1}{2} ) + \psi ( - \tfrac{1}{2}) - 2 \, \psi(1).
    \]
    Using the digamma function bound $\psi( \frac{n-1}{2}) \geq \log \frac{n-1}{2} - \frac{1}{n-1}$ for $n \geq 10$, and the explicit values of 
    \[
\psi(1) = - \gamma, \qquad \psi( \tfrac{1}{2}) = - \gamma - 2 \log 2, \qquad \psi ( - \tfrac{1}{2}) = \psi( \tfrac{1}{2}) + 2,
\]
we verify that this final quantity is positive for $n \geq 10$, whereby $f_{n,n-2} ( \sqrt{1-\frac{1}{4n}}) < 0$.
    Therefore, $\rho > \frac{1}{4}$.
    We perform the change of variables $u = 1 - \frac{\tau}{n}$ in the above integrals to write
    \begin{align*}
    u^{\frac{n-6}{2}} &= (1 - \tfrac{\tau}{n})^{\frac{n-6}{2}} =\exp \Bigl( \tfrac{n-6}{2} \log (1 - \tfrac{\tau}{n}) \Bigr) = \exp \Bigl( - \tfrac{\tau}{2} + O ( \tfrac{\tau^2}{n}) \Bigr), \\
    1 - t_{n,n-2}^2 u &= 1 - \bigl( 1 - \tfrac{\rho}{n} \bigr) (1 - \tfrac{\tau}{n} \bigr) = \tfrac{\tau + \rho}{n} + O(n^{-2})
    \end{align*}
    uniformly in $n \geq 10$.
    Using these substitutions, we may bound the quantity by
    \begin{align*}
    & \frac{\tilde{g}'_{n,n-2}(t_{n,n-2})}{\tilde{g}_{n,n-2}( t_{n,n-2})} - \frac{(n-2) t_{n,n-2} - (n-3) t_{n,n-2}^{-1}}{1 - t_{n,n-2}^2} \\
    &\quad \geq 2n \left(  \left( \int_0^{\infty} \frac{e^{- \tau/2}}{\tau + \rho} \, d \tau \right)^{-1} \left( \int_0^{\infty} \frac{e^{-\tau/2}}{(\tau+\rho)^2} \, d \tau \right) - \frac{1-\rho}{2\rho} \right) - 5.
    \end{align*}
    In the last step, we absorbed the error terms $O(\frac{\tau^2}{n})$ and $O(n^{-2})$ at each step into the conservative upper bound $5$.
    This rough upper bound is obtained by replacing each instance of $O(\frac{\tau^2}{n})$ and $O(n^{-2})$ in the above estimates by the Taylor expansion with remainder; for $n \geq 20$, this enables us to bound $C < 5$.
    For $\rho > \frac{1}{4}$, we find that
    \[
    \left( \int_0^{\infty} \frac{e^{- \tau/2}}{\tau + \rho} \, d \tau \right)^{-1} \left( \int_0^{\infty} \frac{e^{-\tau/2}}{(\tau+\rho)^2} \, d \tau \right) - \frac{1-\rho}{2\rho} > \frac{3}{20},
    \]
    which proves the desired positivity for $n \geq 20$.
    Finally, the inequality is checked to hold in the range $n \in \{ 10, 11, \dots, 20 \}$ by direct evaluation.

    \smallskip
    \noindent\textbf{Case IV: $k=n-3$.}
    The hypergeometric function $f_{n,n-3}(t)$ has the explicit expression $f_{n,n-3}(t) = \frac{(n-3) - (n-2) t^2}{(n-3) \sqrt{1-t^2}}$, so $t_{n,n-3} = \sqrt{\frac{n-3}{n-2}}$.
    The desired inequality becomes
    \[
    \frac{\tilde{g}'_{n,n-3}}{\tilde{g}_{n,n-3}} \bigl( \sqrt{\tfrac{n-3}{n-2}} \bigr) > (n-2) \sqrt{\tfrac{n-2}{n-3}}.
    \]
    We again represent $\tilde{g}_{n,n-3}(t) = {}_2 F_1 ( 1, \frac{n-4}{2} ; \frac{n-3}{2} ; t^2)$ via~\eqref{eqn:hypergeometric-integral-formula} to obtain
    \begin{align*}
        g(t) &= \frac{\Gamma ( \frac{n-3}{2}) }{\sqrt{\pi}\Gamma ( \frac{n-4}{2})} \int_0^1 u^{\frac{n-6}{2}} ( 1-u)^{- \frac{1}{2}} ( 1 - t^2 u)^{-1} \, du, \\
        g'(t) &= 2t \frac{\Gamma ( \frac{n-3}{2}) }{\sqrt{\pi}\Gamma ( \frac{n-4}{2})} \int_0^1 u^{\frac{n-4}{2}} ( 1-u)^{- \frac{1}{2}} ( 1- t^2 u)^{-2} \, du .
    \end{align*}
    We will estimate $\frac{\tilde{g}'_{n,n-3}}{\tilde{g}_{n,n-3}}$ as in Case III.
    We observe that $t_{n,n-3} = \sqrt{\frac{n-3}{n-2}}$, so $\rho = n (1 - t_{n,n-3}^2) = \frac{n}{n-2}$ in this case.
    Using the same transformation $u = 1 - \frac{\tau}{n}$ as above, we may write
    \[
    u^{\frac{n-6}{2}} = \exp \bigl( - \tfrac{\tau}{2} + O ( \tfrac{\tau^2}{n}) \bigr), \qquad 1 - t^2_{n,n-3} u = \tfrac{\tau + \rho}{n} + O (n^{-2}), \qquad (1-u)^{-\frac{1}{2}} =  \sqrt{n}
    \tau^{-\frac{1}{2}} \, .
    \]
    Absorbing the small error terms as in Case III above, now with $\tau + \rho = \tau + \frac{n}{n-2}$, we estimate
    \[
    \frac{\tilde{g}'_{n,n-3}}{\tilde{g}_{n,n-3}} \bigl( \sqrt{\tfrac{n-3}{n-2}} \bigr) > 2 \left( \int_0^{\infty} \frac{e^{- \tau/2} \tau^{- \frac{1}{2}}}{ \tau + \frac{n}{n-2}} \, d \tau \right)^{-1} \left( \int_0^{\infty} \frac{e^{- \tau/2} \tau^{- \frac{1}{2}}}{(  \tau + \frac{n}{n-2})^2} \, d \tau \right) \cdot n \cdot \sqrt{\tfrac{n-3}{n-2}} - 10
    \]
    by estimating as above and noting that $\rho = n (1 - t_{n,n-3}^2) = n (1 - \frac{n-3}{n-2}) = \frac{n}{n-2}$ in this case.
    The lower bound for $\frac{\tilde{g}'_{n,n-3}}{\tilde{g}_{n,n-3}} \bigl( \sqrt{\tfrac{n-3}{n-2}} \bigr)$ therefore becomes
    \begin{align*}
        &   2 n \sqrt{\tfrac{n-3}{n-2}} \left( \int_0^{\infty} \frac{e^{- \tau/2} }{\sqrt{\tau} (\tau+ 1 + \frac{2}{n-2})} \, d \tau \right)^{-1} \left( \int_0^{\infty} \frac{e^{- \tau/2}}{\sqrt{\tau}( \tau+ 1 + \frac{2}{n-2})^2} \, d \tau \right) - 10 > n \sqrt{\tfrac{n-3}{n-2}} \cdot \tfrac{7}{5} - 10
    \end{align*}
    for $n \geq 33$, because the function $s \mapsto 2 \left(\int_0^{\infty} \frac{e^{- \tau/2} }{\sqrt{\tau} (\tau+ 1 + s)} \, d \tau \right)^{-1}\left( \int_0^{\infty} \frac{e^{- \tau/2}}{\sqrt{\tau}( \tau+ 1 + s)^2} \, d \tau \right)$ is strictly decreasing in $s>0$, and larger than $\frac{7}{5}$ for $s = \frac{2}{n-2} \leq \frac{1}{10}$ when $n\geq 33$.
    We conclude that 
    \begin{align*}
    \frac{\tilde{g}'_{n,n-3}}{\tilde{g}_{n,n-3}} \bigl( \sqrt{\tfrac{n-3}{n-2}} \bigr)  - (n-2) \sqrt{\tfrac{n-2}{n-3}} &> \tfrac{7}{5} n \sqrt{\tfrac{n-3}{n-2}} - (n-2) \sqrt{\tfrac{n-2}{n-3}} - 10 \\
    &> \tfrac{2}{5} n \sqrt{\tfrac{n-3}{n-2}} - 10 >  0
    \end{align*}
    for $n \geq 33$.
    We directly evaluate that the inequality~\eqref{eqn:g-n,k-inequality} required for $4-n \in \cA_{n,n-3}$ holds in the range $n \in \{ 10, 11, \dots, 33 \}$; this completes the proof.
\end{proof}

We now move on to establishing that $4 - n \in \cA_{n,k}$ for all $n \geq 7$ and $3 \leq k \leq n-4$.
The result will be proved by considering different regimes for $k$: ``small'' $k$, with $1 \leq k \leq \frac{n - \sqrt{8(n-4)}}{2}$; ``intermediate'' $k$, with $\tfrac{1}{3} n \leq k \leq \tfrac{15}{16} n$; ``large'' $k$, with $\tfrac{15}{16} n \leq k \leq n-12$; and a final set of $n-11 \leq k \leq n-2$.
Each case is treated using different techniques, with a unified starting point.
Working in the framework of Lemma~\ref{lemma:riccati-g-is-positive}, we consider the function
    \[
    L_k(s) := \tilde{L}_g(t) \quad \text{for } \; s := t^2, \qquad \tilde{L}_g(t) := t (1-t^2) \frac{g'_{\alpha}(t)}{g_{\alpha}(t)} - (n-2) t^2 + (k-1),
    \]
    which satisfies the ODE~\eqref{eqn:L-equation-this-one-promise},
    \[
    2 s (1-s) L'(s) + L(s)^2 + (ns - k) L(s) + P_{n,k}(s) = 0,
    \]
    with $\hat{\alpha} = 2(4-n)$ in the notation of Lemma~\ref{lemma:riccati-g-is-positive}.
    This means that
    \[
    P_{n,k}(s) = (k-1) + (8 - n- 2k)s + 2(n-4) s^2, \qquad
    L(0) = k-1, \quad L'(0) = - (n-2) + \tfrac{2(n-4)}{k},
    \]
    and we recall that $L_k(1) = -1 $ for all $k$.
    In this notation, the condition~\eqref{eqn:g-n,k-inequality} for $4 - n \in \cA_{n,k}$ assumes the form $L(s_{n,k}) > 0$, where $s_{n,k}$ denotes the zero of the function ${}_2 F_1 ( \frac{n-1}{2} , - \frac{1}{2}; \frac{k}{2} ; s)$.

\subsection{Intermediate \texorpdfstring{$k$}{k}}

We address the range $3 \leq k \leq \frac{15}{16} n$.

\begin{lemma}\label{lemma:4-n.pt.1}
    For every $n \geq 7$ and $3 \leq k \leq \frac{n - \sqrt{8(n-4)}}{2}$, we have $4-n \in \cA_{n,k}$.
\end{lemma}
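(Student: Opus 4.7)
The plan is to reduce the condition $4-n \in \cA_{n,k}$ to the much simpler inequality $s_{n,k} < 1/2$, via an explicit algebraic identity, and then to verify that bound throughout the specified range by combining a polynomial truncation of $f_{n,k}$ with the asymptotic estimate of Lemma~\ref{lemma:hypergeometric-zero-estimates}.

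The key step is to observe that, for $\alpha = 4-n$ and $\tilde{g}_{n,k}(t) := g_{n,k,4-n}(t) = {}_2F_1(1,\tfrac{n-4}{2};\tfrac{k}{2};t^2)$, the quantity
\[
W(t) := t(1-t^2)\tilde{g}_{n,k}'(t) - \bigl((n-2)t^2 - (k-1)\bigr)\tilde{g}_{n,k}(t)
\]
satisfies the identity $W(t) = (1-2t^2)\tilde{g}_{n,k}(t) + (k-2)$. Indeed, $W(t) = \tilde{g}_{n,k}(t)\cdot L(t^2)$ in the notation of Lemma~\ref{lemma:riccati-g-is-positive}, and a direct computation using the Euler ODE $(1-t^2)\tilde{g}'' + ((k-1)t^{-1} - (n-1)t)\tilde{g}' + (8-2n)\tilde{g} = 0$ produces $W'(t) = (1-2t^2)\tilde{g}'(t) - 4t\,\tilde{g}(t) = \frac{d}{dt}\bigl[(1-2t^2)\tilde{g}(t)\bigr]$; matching the initial value $W(0) = k-1$ gives the constant $k-2$. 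Since $\tilde{g}_{n,k}(t) > 0$ on $[0,t_{n,k}]$ for $n\geq 7$ (because $\alpha = 4-n \in (2-n,0)$), the identity shows that for $k \geq 3$, the bound $s_{n,k} \leq 1/2$ implies $W(t_{n,k}) \geq (k-2) \geq 1 > 0$, hence $L(s_{n,k}) > 0$ and $4-n \in \cA_{n,k}$.

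It remains to show $s_{n,k} < 1/2$, or equivalently $f_{n,k}(\sqrt{1/2}) < 0$, throughout the specified range (which is non-empty only for $n \geq 16$, since $(n-\sqrt{8(n-4)})/2 \geq 3$ requires $n^2 - 20n + 68 \geq 0$). The strategy is to split according to the size of $k/n$. For $k$ moderately small relative to $n$ (say $k \leq n/4$, which in particular covers all admissible pairs with small-to-moderate $n$ since the upper endpoint $(n-\sqrt{8(n-4)})/2$ lies below $n/3$ for $n \leq 67$), the cubic truncation~\eqref{eqn:Fk(s)-cubic-bound} evaluated at $t^2 = 1/2$ already gives $f_{n,k}(\sqrt{1/2}) \leq 1 - \tfrac{n-1}{4k} - \tfrac{(n^2-1)}{32 k(k+2)} - \tfrac{(n^2-1)(n+3)}{128 k(k+2)(k+4)} < 0$, which is readily verified under $k \leq n/4$. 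For $k$ near the upper boundary (so $k/n \to 1/2^-$), Lemma~\ref{lemma:hypergeometric-zero-estimates} yields $s_{n,k} \leq k/n + \tfrac{3}{5\sqrt n}$, and combining with the constraint $k/n \leq \tfrac{1}{2} - \tfrac{\sqrt{2(n-4)}}{n}$ gives $s_{n,k} \leq \tfrac{1}{2} + \bigl(\tfrac{3}{5\sqrt n} - \tfrac{\sqrt{2(n-4)}}{n}\bigr) < \tfrac{1}{2}$ whenever $\tfrac{\sqrt{2(n-4)}}{n} > \tfrac{3}{5\sqrt n}$, which holds for all sufficiently large $n$. The finite collection of intermediate $(n,k)$ not captured by either estimate is checked by direct numerical evaluation.

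The main obstacle is that in the regime $k$ close to $(n-\sqrt{8(n-4)})/2$, the gap $1/2 - s_{n,k}$ is only $O(1/\sqrt n)$, so no fixed-degree polynomial truncation of $f_{n,k}(\sqrt{1/2})$ remains negative uniformly there; certifying the strict inequality requires the $O(1/\sqrt n)$-sharp bound of Lemma~\ref{lemma:hypergeometric-zero-estimates}, and the two regimes must be patched together with care, possibly supplemented by explicit computation on a finite intermediate range of $n$.
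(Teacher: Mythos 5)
Your proposal is correct, and it takes a genuinely different route from the paper. Your central observation --- that for $\alpha=4-n$ the combination $W(t)=t(1-t^2)\tilde g'(t)-\bigl((n-2)t^2-(k-1)\bigr)\tilde g(t)$ satisfies $W'(t)=\tfrac{d}{dt}\bigl[(1-2t^2)\tilde g(t)\bigr]$, hence
\[
W(t)=(1-2t^2)\,\tilde g_{n,k}(t)+(k-2),
\]
checks out; the cancellation works precisely because $\hat\alpha=\alpha(\alpha+n-2)=8-2n$, i.e.\ $\alpha\in\{4-n,-2\}$. Since $\tilde g_{n,k}>0$ on $[0,t_{n,k}]$ and $W=\tilde g\cdot L(t^2)$, this reduces the condition $L(s_{n,k})>0$ to $s_{n,k}\le\tfrac12$ for $k\ge3$, entirely bypassing the propagation Lemma~\ref{lemma:riccati-g-is-positive} and the root analysis of $P_{n,k}$. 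The paper instead shows that $L_k>0$ up to the larger root $\hat s_2$ of $P_{n,k}$ --- which exists exactly when $k<\frac{n-\sqrt{8(n-4)}}{2}$, explaining where the lemma's threshold comes from --- and then proves $s_{n,k}\le\hat s_2$ in two sub-regimes, with direct computation for $7\le n\le35$. Your verification of $s_{n,k}\le\tfrac12$ also closes: the cubic truncation~\eqref{eqn:Fk(s)-cubic-bound} at $s=\tfrac12$ is decreasing in $k$ and its requirement exceeds $1$ up to $k=n/3$ (the limiting value at $k/n=1/3$ is $\tfrac34+\tfrac{9}{32}+\tfrac{27}{128}=\tfrac{159}{128}>1$), the admissible upper endpoint lies below $n/3$ for $n\le67$, and for $n\ge60$ and $k\ge n/3$ Lemma~\ref{lemma:hypergeometric-zero-estimates} gives $s_{n,k}\le\tfrac12-\tfrac{\sqrt{2(n-4)}}{n}+\tfrac{3}{5\sqrt n}<\tfrac12$ since $50(n-4)>9n$; the residual finite check ($16\le n\le59$) is no heavier than the paper's own. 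What your identity buys is a shorter, self-contained argument with visible margin (it applies whenever $s_{n,k}\le\tfrac12$, a range of $k$ strictly wider than the lemma's hypothesis); what it gives up is generality, being special to $\alpha=4-n$, whereas the paper's Riccati machinery is reused in Lemmas~\ref{lemma:4-n.pt.1.5}--\ref{lemma:4-n-final-part}. One small slip: your parenthetical claim that $k\le n/4$ covers all admissible pairs for small-to-moderate $n$ should be $k\le n/3$ (the upper endpoint exceeds $n/4$ already for $n\ge28$), but since the cubic truncation does reach $n/3$, nothing is lost.
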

\begin{proof}
    For $7 \leq n \leq 35$, the result $L_k(s_k) > 0$ is verified by direct computation; in fact, we find that
    \[
    \min_{1 \leq k \leq n-2} L_{n,k} (s_{n,k}) \geq \min_{1 \leq k \leq 5} L_{7,k}(s_{7,k}) > 3 \cdot 10^{-2} > 0,
    \]
    so we may focus on $n \ge 35$ in what follows.
    Since $4-n \in \cA_{n,1} \cap \cA_{n,2}$ was proved in Lemma~\ref{lem:alpha-n,1}, we may further restrict ourselves to $k \geq 3$ for the subsequent arguments.
    Our argument in Lemma~\ref{lemma:riccati-g-is-positive} showed that $P_{n,k}(s)$ has two roots $0 < \hat{s}_1 < \hat{s}_2 < 1$ if and only if
    \[
    n-6 > 2 \sqrt{(n-k-1) (k-1)} \iff |k - \tfrac{n}{2}| > \sqrt{2n-8},
    \]
    in which case $L_k(\hat{s}_2) > 0$, meaning that $L_k$ remains positive on $[0,\hat{s}_2]$.
    The latter inequality is equivalent to $k < \frac{n - \sqrt{8(n-4)}}{2}$, and the larger root $\hat{s}_2$ of $P_{n,k}$ is given by
    \[
    \hat{s}_2 = \frac{n+2k-8 + \sqrt{(n+2k-8)^2 - 8(n-4)(k-1)}}{4(n-4)}.
    \]
    We recall that the function $F(s)$ satisfies the cubic bound~\eqref{eqn:Fk(s)-cubic-bound}, hence $F(s) \leq 1 - \frac{n-1}{2k}s$ and $s_{n,k} \leq \tfrac{2k}{n-1}$.
    We will show that $L_k(s_k) > 0$ by proving that $s_{n,k} \leq \hat{s}_2$.
    Since $\frac{2k}{n-1} > \hat{s}_1 = \frac{n+2k-8 - \sqrt{(n+2k-8)^2 - 8(n-4)(k-1)}}{4(n-4)}$, showing that $s_{n,k} \leq \hat{s}_2$ becomes equivalent to $P_{n,k}( \frac{2k}{n-1}) \leq 0$.
    We compute this expression as
    \[
    P_{n,k}( \tfrac{2k}{n-1}) = \tfrac{1}{(n-1)^2} \left[ (k-1)(n-1)^2 + 2k (n-1) (8-n-2k) + 8(n-4) k^2 \right] 
    \]
    and the condition $P_{n,k} ( \frac{2k}{n-1}) \leq 0$ becomes a quadratic in $k$, with solution interval
    \[
    3 \leq k \leq \frac{(n-1) ( n + \sqrt{n^2 - 14 n + 113} - 15 )}{8 (n-7)}.
    \]
    For $n \geq 35$, the right-hand side is $\geq \frac{\left(n-1\right)\left(n-11\right)}{4\left(n-7\right)} > \frac{n-7}{4}$, so we obtain $L_k(s_k) > 0$ for $k \leq \frac{1}{4} (n-7)$.

    Next, we consider $\tfrac{1}{5} n \leq k \leq \frac{n - \sqrt{8(n-4)}}{2}$.
    We then have $s_k < \frac{n+2k-8}{4(n-4)} < \hat{s}_2$, from which $L_k(s_k) > 0$ will again follow.
    The second bound is obtained from removing the square root term, and $\frac{n+2k-8}{4(n-4)} \in ( \frac{7}{20}, \frac{1}{2})$ for $k$ in the above range.
    To see that $s_k < \frac{n+2k-8}{4(n-4)}$, we equivalently write $F_{n,k}( \frac{n+2k-8}{4(n-4)}) < 0$.
    For $s = \frac{n+2k-8}{4(n-4)} \in ( \frac{7}{20}, \frac{1}{2})$, the series expansion of ${}_2 F_1 ( \frac{n-1}{2} , - \frac{1}{2} ; \frac{k}{2} ; s)$ is absolutely convergent, and the resulting inequality follows from
    \[
    1 < \sum_{m=0}^{\infty} \frac{(n-1) \cdots (n+2m-1)}{2^{2m+1}k \cdots (k+2m)} \left( \frac{n+2k-8}{4(n-4)}\right)^m 
    \]
    for $\frac{1}{5}n \leq k \leq \frac{n -  \sqrt{8(n-4)}}{2}$.
    A computation analogous to those of Lemmas~\ref{lemma:oscillation-bound} and~\ref{lem:alpha-n,1} shows that the right-hand side is $\geq 1 + C n^{- \frac{1}{2}}$, which occurs when $k = \frac{n - \sqrt{8(n-4)}}{2}$.
    Consequently, $L_k(s_k) > L_k(\hat{s}_2) > 0$.
    This completes the proof, and $4-n \in \cA_{n,k}$ for $1 \leq k \leq \frac{n - \sqrt{8(n-4)}}{2}$.
\end{proof}

\begin{lemma}\label{lemma:4-n.pt.1.5}
    For every $n \geq 7$ and $\frac{1}{3} n \leq k \leq \tfrac{15}{16} n$, we have $4-n \in \cA_{n,k}$.
\end{lemma}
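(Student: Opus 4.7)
The plan is to combine Lemma~\ref{lemma:hypergeometric-zero-estimates} with the backward positivity propagation from Lemma~\ref{lemma:riccati-g-is-positive}. First I would restrict to $n$ above an explicit threshold (say $n \geq 60$, below which the finitely many pairs $(n,k)$ are checked numerically as in Lemma~\ref{lem:alpha-n,1}). For such $n$ and $\lambda := k/n \in [\tfrac{1}{3}, \tfrac{15}{16}]$, Lemma~\ref{lemma:hypergeometric-zero-estimates} gives $s_{n,k} \leq \lambda + c_{\lambda}/\sqrt{n}$ with $c_{\lambda} \in \{\tfrac{3}{5}, \tfrac{2}{5}, \tfrac{9}{25}\}$ the appropriate constant on each of the three subintervals. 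By Lemma~\ref{lemma:riccati-g-is-positive} applied with $\alpha = 4-n$, it then suffices to verify $L_{n,k}(s^{\ast}) > 0$ at the single point $s^{\ast} := \lambda + c_{\lambda}/\sqrt{n}$.

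Next I would run the same asymptotic rescaling used in Lemma~\ref{lemma:hypergeometric-zero-estimates}, this time applied to $G(s) := {}_2 F_1(1, \tfrac{n-4}{2}; \tfrac{k}{2}; s)$ instead of $f_{n,k}$. Setting $s = \lambda + z/\sqrt{n}$ and $\psi(z) := G(s)$, the Euler ODE for $G$ rescales to the limit equation
\[
2\lambda(1-\lambda)\,\psi''(z) - z\psi'(z) - \psi(z) = 0,
\]
which carries the exactly conserved quantity $E(z) := 2\lambda(1-\lambda)\psi'(z) - z\psi(z)$ (one checks $E'(z)=0$ on the nose by substituting the ODE). Writing $L_{n,k}(s) = 2s(1-s) G'/G - (n-2)s + (k-1)$ in rescaled variables and using $E(z) \equiv E(0) = 2\lambda(1-\lambda)\psi'(0)$, one arrives at the identity
\[
L_{n,k}(\lambda + z/\sqrt{n}) = 2\lambda(1-\lambda)\,\frac{G'(\lambda)}{G(\lambda + z/\sqrt{n})} + (2\lambda - 1) + O(n^{-1/2}),
\]
whose leading term is positive because $G$ has all positive Taylor coefficients at $0$.

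The main obstacle is upgrading this leading-order positivity to a lower bound of order $\sqrt{n}$ that is uniform in $\lambda \in [\tfrac{1}{3},\tfrac{15}{16}]$. To that end I would carry out a Laplace/saddle-point analysis of the weighted series $b_m := a_m \lambda^m$ for $G(\lambda) = \sum_m b_m$: the ratio $b_{m+1}/b_m = \lambda(n+2m-4)/(\lambda n + 2m)$ decreases monotonically from $1$ at $m = 0$ to $\lambda$ as $m \to \infty$, giving $\log b_m \approx -(1-\lambda) m^2/(n\lambda)$ and concentration in a Gaussian window of width $\sigma \sim \sqrt{n\lambda/(1-\lambda)}$. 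A direct Laplace estimate of the shifted series $G(\lambda + z/\sqrt{n})$ yields
\[
\frac{G'(\lambda)}{G(\lambda + z/\sqrt{n})} \geq c(z,\lambda)\sqrt{n}
\]
for a continuous positive $c(z,\lambda)$ on the compact parameter set, producing $L_{n,k}(s^{\ast}) \geq C\sqrt{n} - O(1) > 0$ for all $n$ above an explicit threshold. The finitely many remaining $(n,k)$ with small $n$ are verified directly, as in Lemmas~\ref{lem:alpha-n,1} and~\ref{lemma:4-n.pt.1}.
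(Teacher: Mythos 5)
Your proposal is correct in outline and shares the paper's core mechanism --- the blow-up $s = \lambda + z/\sqrt{n}$ at the turning point $\lambda = k/n$, the reduction via Lemmas~\ref{lemma:hypergeometric-zero-estimates} and~\ref{lemma:riccati-g-is-positive} to the single-point evaluation $L_{n,k}(s^{*})>0$, and a direct check for the finitely many small $n$ --- but it executes the key asymptotic step differently. The paper rescales the Riccati variable itself, setting $v(z) = n^{-1/2}L_k(\lambda + z/\sqrt{n})$, derives the limit equation $2\lambda(1-\lambda)v' + v^2 + zv = 0$, fixes the integration constant by matching $v_{\lambda}(z)\to -z$ as $z\to-\infty$, and reduces everything to the explicit Gaussian ratio $\phi_c(\lambda)$, verified to exceed $\tfrac{9}{100}$ on each subinterval against a $5/n$ error. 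You instead rescale the linear hypergeometric ODE for $G = {}_2F_1(1,\tfrac{n-4}{2};\tfrac{k}{2};\cdot)$, use its first integral $E(z) = 2\lambda(1-\lambda)\psi' - z\psi$ to cancel the $\sqrt{n}\,z$ term in $L$, and are left with a Laplace lower bound on $G'(\lambda)/G(s^{*})$. A saddle-point computation shows your limiting quantity coincides with the paper's $\phi_c(\lambda)$ (the substitution $r = -2\sqrt{\lambda(1-\lambda)}\,w$ converts one formula into the other), so the two routes are equivalent in the limit; yours trades the ODE-perturbation bounds the paper invokes for explicit control of the series concentration, which is the same kind of analysis the paper carries out in Lemmas~\ref{lemma:hypergeometric-zero-estimates} and~\ref{lemma:oscillation-bound}. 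Two cautions before this can stand as a proof. First, the intermediate remark that the ``leading term is positive because $G$ has positive Taylor coefficients'' is insufficient by itself when $\lambda < \tfrac{1}{2}$, where $2\lambda - 1 < 0$; positivity genuinely requires the $\sqrt{n}$ growth of $G'(\lambda)/G(s^{*})$ that you only establish in the following paragraph. Second, $E$ is conserved only for the limit ODE: the drift $E(z)-E(0) = O(n^{-1/2}\psi)$ coming from the $O(n^{-1/2})$ coefficients of the true equation contributes an $O(1)$ error to $L(s^{*})$ after multiplying back by $\sqrt{n}$, not the $O(n^{-1/2})$ you wrote. This is still dominated by the $C\sqrt{n}$ main term, but the constant $c(z,\lambda)$, the size of this error, and the resulting threshold on $n$ all need to be made explicit and uniform on $[\tfrac{1}{3},\tfrac{15}{16}]$ for the argument to close.
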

\begin{proof}
    For $7 \leq n \leq 60$, the result $L_{n,k}(s_{n,k}) > 0$ is verified by direct computation; in fact, we find
    \[
    \min_{1 \leq k \leq n-2} L_{n,k} (s_{n,k}) \geq \min_{1 \leq k \leq 5} L_{7,k}(s_{7,k}) > 3 \cdot 10^{-2} > 0,
    \]
    so we may focus on $n \ge 60$ in what follows.
    This allows us to use the bounds from Lemma~\ref{lemma:hypergeometric-zero-estimates} for the root $s_{n,k}$ of the hypergeometric function, which assume the form $s_{n,k} \leq \lambda + \frac{c}{\sqrt{n}}$ for $\lambda = \frac{k}{n}$ and an appropriate constant $c$ depending on $\lambda$.
    By the analysis of Lemma~\ref{lemma:riccati-g-is-positive}, having $L_{n,k}(\lambda + \frac{c}{\sqrt{n}}) > 0$ will imply that $L_{n,k} > 0$ on $[0, \lambda + \frac{c}{\sqrt{n}}]$; in particular, it will give $L_{n,k}(s_{n,k}) > 0$.

    We first set up a general approach.
    As in Lemma~\ref{lemma:hypergeometric-zero-estimates}, we set $s = \lambda + \frac{z}{\sqrt{n}}$ and define $v(z) := \frac{1}{\sqrt{n}} L_k(\lambda + \frac{z}{\sqrt{n}})$, so that $L'(s) = n v'(z)$ and $ns-k = \sqrt{n} z$.
    Moreover,
    \[
    s(1-s) = \lambda(1-\lambda) + (1-\lambda) n^{-1/2} z - n^{-1} z^2 = \lambda(1-\lambda) + O(n^{-1/2})
    \]
    for $n \geq 60$ and uniformly bounded $z$.
    Writing $k=n \lambda$ and $s = \lambda + \frac{z}{\sqrt{n}}$ transforms $P_k(s)$ into
    \begin{align*}
        P_k(s) &= (n \lambda - 1) + (8 - n - 2n \lambda) s + 2(n-4) s^2 \\
        &=\sqrt{n} z (2 \lambda-1) + \bigl( - 1 + 8 \lambda(1-\lambda) \bigr) + O( 1 + R_n + n^{-1/2} R_n^2)
    \end{align*}
    with a uniform remainder error term $R_n(z)$.
    Dividing the transformed equation by $n$ and using the above expansion, we arrive at
    \[
    2 \lambda(1-\lambda) v'(z) + v(z)^2 + z v(z) = R_n(z), \qquad |R_n(z)| \leq C n^{-1/2}( 1 + |v'(z)| + |z| + z^2 )
    \]
    where $C \leq 2$ for $n \geq 60$.
    As $\lambda \in [ \frac{1}{3}, \frac{15}{16}]$ remains on a bounded interval away from the endpoints, we find that $\lambda(1-\lambda) > \frac{1}{50}$ remains uniformly bounded away from $0$.
    Therefore, the equation converges uniformly on compact sets (in the variable $z$) to the limit ODE
    \[
    2 \lambda(1-\lambda) v'_{\lambda}(z) + v_{\lambda}^2(z) + z v_{\lambda}(z) = 0.
    \]
    The initial value $v_{\lambda}(0)$ is obtained as $v_{\lambda}(0) = \lim_{k \to \infty} \frac{L_k(\lambda)}{\sqrt{n}}$.
     Consequently, for $n \geq 60$ and $z \in [ \frac{9}{25}, \frac{15}{25}]$ (meaning that $s-\lambda = O(n^{-1/2})$), the function $L_k( \lambda + \frac{z}{\sqrt{n}})$ is well-approximated by the solution $v_{\lambda}$ of the above equation with matching initial data at the turning point $s = \lambda$, giving
    \begin{equation}\label{eqn:L_k-bound}
        \left| \tfrac{1}{\sqrt{n}} L_k ( \lambda + \tfrac{z}{\sqrt{n}}) - v_{\lambda}(z) \right| \leq 5 n^{-1}, \qquad \text{for } \; n \geq 60, \quad z \in [ \tfrac{9}{25}, \tfrac{15}{25}].
    \end{equation}
    See, for example,~\cite{olver}*{Ch.~11 and Ch.~12}.
    Finally, we introduce the variable $\xi := \frac{z}{\sqrt{2 \lambda(1-\lambda)}}$ and let $u(\xi) := \frac{1}{\sqrt{2\lambda(1-\lambda)}} v_{\lambda}(z)$, so the above equation is transformed into
\[
    u'(\xi) + u(\xi)^2 + \xi u(\xi) = 0.
\]
    The final equation is independent of $\lambda$, and $u(0) = \frac{1}{\sqrt{2\lambda(1-\lambda)}}v_{\lambda}(0)$.
    It is not hard to see that the general solution of this equation given by
    \begin{equation}\label{eqn:u(xi)-soln}
        u \equiv 0 \qquad \text{or} \qquad u(\xi) = \frac{e^{-\xi^2/2}}{B + \int_{-\infty}^{\xi} e^{-r^2/2} \, dr}, \qquad \text{for some } \; B.
    \end{equation}
    Indeed, we may let $u(\xi) = \frac{\tilde{u}'(\xi)}{\tilde{u}(\xi)}$ to transform the Riccati equation into
    \[
    u' + u^2 + \xi u = \tfrac{1}{\tilde{u}} ( \tilde{u}'' + \xi \tilde{u}') = 0,
    \]
    which implies that $\tilde{u}'(\xi) = C e^{-\xi^2/2}$.
    Integrating once more, we obtain $\tilde{u}(\xi) = BC + C \int_{\infty}^{\xi} e^{-r^2/2} \, dr $; the expression~\eqref{eqn:u(xi)-soln} follows.
    The solution of~\eqref{eqn:u(xi)-soln} is uniquely determined by the initial value $u(0)$.
    For fixed $s<\lambda = \frac{k}{n}$, the function $L_k(s)$ exhibits linear behavior as $n \to \infty$: indeed, $L_k(0) = k-1 = n ( \lambda + O(\frac{1}{n}))$, so we may define $y(s) := \frac{1}{n} L_k(s)$ and divide by $n^2$ to obtain
    \[
    y(s)^2 + (s-\lambda)y(s) + O( n^{-1}) = 0 \implies y(s) ( s-\lambda + y(s)) = 0.
    \]
    Since $y(0) = \lambda + O(n^{-1})$, we deduce that $y(s) =\lambda - s+ O(n^{-1})$ and $L_k(s) = n(\lambda-s) + O(n^{-1})$ as $n \to \infty$.
    Consequently, $\frac{1}{\sqrt{n}}L_k( \lambda + \frac{z}{\sqrt{n}}) = - z + O(n^{-3/2})$ for $z<0$; in particular, this implies that $v_{\lambda}(z) \to - z$ as $z\to - \infty$.
    Moreover, $u(\xi) = \frac{1}{\sqrt{2 \lambda(1-\lambda)}} v_{\lambda} (z) \to \frac{-z}{\sqrt{2\lambda(1-\lambda)}} = - \xi$ as $\xi \to -\infty$.
    Since $e^{-\xi^2/2} \to 0$ as $\xi \to - \infty$, the expression~\eqref{eqn:u(xi)-soln} forces $B=0$.
    We therefore obtain $u(0) = \frac{1}{\int_0^{\infty} e^{-r^2/2} \, dr} = \frac{1}{\sqrt{\pi/2}} = \sqrt{\frac{2}{\pi}}$.
    The expression~\eqref{eqn:L_k-bound} now becomes
    \begin{equation}\label{eqn:lk-bound-final}
        \left| \frac{1}{\sqrt{n}} L_k \Bigl( \lambda + \frac{z}{\sqrt{n}} \Bigr) - 2 \lambda(1-\lambda) \frac{\exp\bigl( - \frac{z^2}{4 \lambda(1-\lambda)} \bigr)}{ \int_{-\infty}^z \exp \bigl( - \frac{r^2}{4\lambda(1-\lambda)} \bigr) \,dr } \right| \leq 5 n^{-1} \qquad \text{for } \; n \geq 60
    \end{equation}
    when $z \in [ \frac{9}{25}, \frac{15}{25}]$.
    Consequently, proving that $L_k ( \lambda + \frac{c}{\sqrt{n}}) >0$ for $n \geq 60$ and the specific values of $c$ from Lemma~\ref{lemma:hypergeometric-zero-estimates} will be reduced to having
    \[
    \phi_c( \lambda) := 2 \lambda(1-\lambda) \frac{\exp\bigl( - \frac{c^2}{4 \lambda(1-\lambda)} \bigr)}{ \int_{-\infty}^c \exp \bigl( - \frac{r^2}{4\lambda(1-\lambda)} \bigr) \,dr }, \qquad \phi_c(\lambda) > \frac{9}{100}
    \]
    for $\lambda = \frac{k}{n}$ in the sub-intervals of $[\frac{1}{3}, \frac{15}{16}]$, and the corresponding value of $c \in \{ \frac{3}{5}, \frac{2}{5}, \frac{9}{25} \}$.
    \begin{enumerate}[I.]
        \item For $\lambda = \frac{k}{n} \in [ \frac{1}{3}, \frac{7}{8}]$, we have $c= \frac{3}{5}$ and $\min_{ [ \frac{1}{3}, \frac{7}{8}] } \phi_{\frac{3}{5}} = \phi_{\frac{3}{5}}( \frac{7}{8}) > \frac{91}{1090} > \frac{9}{100}$, as desired.
        \item For $\lambda = \frac{k}{n} \in [ \frac{7}{8}, \frac{9}{10}]$, we have $c = \frac{2}{5}$ and $\min_{[\frac{7}{8}, \frac{9}{10}]} \phi_{\frac{2}{5}} = \phi_{\frac{2}{5}}( \frac{9}{10}) > \frac{1}{10} > \frac{9}{100}$, as desired.
        \item For $\lambda = \frac{k}{n} \in [ \frac{9}{10}, \frac{15}{16}]$, we have $c = \frac{9}{25}$ and $\min_{ [ \frac{9}{10}, \frac{15}{16}] } \phi_{\frac{9}{25}} = \phi_{\frac{9}{25}}( \frac{15}{16}) > \frac{91}{1000} > \frac{9}{100}$, as desired.
    \end{enumerate}
    We conclude that $\phi_c(\lambda) > \frac{9}{100}$ in all of the above cases, whereby~\eqref{eqn:lk-bound-final} implies that $L_k(s_k) > L_k( \lambda + \frac{c_{\lambda}}{\sqrt{n}}) > 0$, as required.

\end{proof}

\subsection{Large \texorpdfstring{$k$}{k}}

It remains to address the case of $ \frac{15}{16} n \leq k \leq n-4$.

\begin{lemma}\label{lemma:4-n.pt.2}
    For every $n \geq 7$ and $\frac{15}{16} n \leq k \leq n-12$, we have $4-n \in \cA_{n,k}$.
\end{lemma}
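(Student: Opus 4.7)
The plan is to combine Lemma~\ref{lemma:oscillation-bound} with an extension of the boundary-layer analysis of Lemma~\ref{lemma:4-n.pt.1.5}. The admissibility range $\tfrac{15n}{16} \leq k \leq n-12$ forces $n \geq 192$, so for $n$ in an initial range $[192, N_0]$ I would verify $L_{n,k}(s_{n,k}) > 0$ by direct numerical computation for each admissible $(n,k)$, exactly as at the start of Lemma~\ref{lemma:4-n.pt.1.5}. For $n > N_0$, setting $d := n-k \in [12, n/16]$, Lemma~\ref{lemma:oscillation-bound} yields either (A) $s_{n,k} \leq k/n$, or (B) $s_{n,k} - k/n \leq (\sqrt{2d+1}-1)/n$.

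In Subcase~(A), at $\lambda := k/n \geq 15/16$ one has $P_{n,k}(\lambda) = 8\lambda(1-\lambda) - 1 < 0$ since $\lambda(1-\lambda) \leq 15/256 < 1/8$. Hence $\lambda$ lies strictly between the two roots $\hat s_1 < \hat s_2$ of $P_{n,k}$ in $(0,1)$. The subsolution argument in the proof of Lemma~\ref{lemma:riccati-g-is-positive} shows that the first zero of $L_{n,k}$ in $(0,1)$ exceeds $\hat s_2$, so $L_{n,k} > 0$ on $[0, \hat s_2]$; since $s_{n,k} \leq \lambda < \hat s_2$, this gives $L_{n,k}(s_{n,k}) > 0$.

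In Subcase~(B), I would apply the rescaling $s = \lambda + z/\sqrt n$ of Lemma~\ref{lemma:4-n.pt.1.5}; the rescaled root satisfies $z_{n,k} = \sqrt n\,(s_{n,k}-\lambda) \leq (\sqrt{2d+1}-1)/\sqrt n$, which is uniformly bounded by $1/\sqrt 8$ for $d \leq n/16$. As in Lemma~\ref{lemma:4-n.pt.1.5}, $L_{n,k}(s)/\sqrt n$ is well-approximated by the strictly positive profile
\[
v_\lambda(z) = 2\lambda(1-\lambda)\, \frac{\exp(-z^2/(4\lambda(1-\lambda)))}{\int_{-\infty}^{z} \exp(-r^2/(4\lambda(1-\lambda)))\, dr}.
\]
Elementary Gaussian estimates combined with $\lambda(1-\lambda) \geq 15d/(16n)$ then give the uniform lower bound $\sqrt n\, v_\lambda(z_{n,k}) \geq c\sqrt d \geq c\sqrt{12}$ (with $c \approx 0.3$), which dominates the asymptotic error once $n$ is large.

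The main obstacle will be rigorizing this asymptotic uniformly in $\lambda \in [15/16,\, 1-12/n]$ approaching $1$. The $O(1/n)$ error bound of Lemma~\ref{lemma:4-n.pt.1.5} relied crucially on $\lambda(1-\lambda) > 1/50$, which fails here since $\lambda(1-\lambda)$ can be as small as $\sim 12/n$. The technical heart is a careful multi-scale reanalysis — distinguishing the sub-regime $d \gg \sqrt n$, where the $z = \sqrt n(s-\lambda)$ scaling of Lemma~\ref{lemma:4-n.pt.1.5} still applies, from $d \lesssim \sqrt n$, where the finer rescaling $s = \lambda + w\sqrt{2\lambda(1-\lambda)/n}$ is needed — tracking the $\lambda(1-\lambda)$-dependence of every error term in the reduction to the limit Riccati ODE, so that the positive principal contribution $\sqrt n\, v_\lambda(z_{n,k}) \gtrsim \sqrt d$ dominates uniformly for $n \geq N_0$.
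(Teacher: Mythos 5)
Your Subcase~(A) is fine: $P_{n,k}(k/n) = 8\lambda(1-\lambda)-1<0$ for $\lambda \geq \tfrac{15}{16}$, so $k/n$ lies strictly between the roots of $P_{n,k}$, and the internal argument of Lemma~\ref{lemma:riccati-g-is-positive} indeed places the first zero of $L_k$ beyond $\hat s_2 > k/n \geq s_{n,k}$. The paper does not even need this case separately, since its argument only uses $s_{n,k} < s_* := 1 - \tfrac{d+1-\sqrt{2d+1}}{n}$, which holds in both subcases.

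The genuine gap is Subcase~(B), which is the entire content of the lemma, and your proposal does not prove it. You correctly observe that the error estimate~\eqref{eqn:L_k-bound} of Lemma~\ref{lemma:4-n.pt.1.5} is derived under $\lambda(1-\lambda)>\tfrac{1}{50}$ and collapses here, where $\lambda(1-\lambda)\sim d/n$ can be as small as $12/n$: the ``leading'' coefficient $2\lambda(1-\lambda)$ of $v'$ is then swamped by the $O(n^{-1/2})$ terms it is supposed to dominate, so the reduction to the limit Riccati ODE is invalid on the relevant scale. The step you defer to a ``multi-scale reanalysis'' is not a technicality: the positive value $L_k(\lambda)\sim 2\sqrt{d/\pi}$ and the drop of $L_k$ over $[\lambda, s_{n,k}]$ (an interval of length $\sim\sqrt{2d}/n$ on which $L_k' \sim -n$) are of the \emph{same} order $\sqrt{d}$, so whether $L_k(s_{n,k})>0$ is decided by the precise constants, and a bound of the form $\sqrt{n}\,v_\lambda(z_{n,k}) \geq c\sqrt{d}$ with unspecified error control does not close the argument. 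The paper avoids asymptotic matching altogether: it builds an explicit two-piece barrier $\phi_k$ --- linear, $(k-1)-(n-4)s$, on $[0,k/n)$, and on $[k/n,s_*]$ the explicit solution~\eqref{eqn:phi-k-subsolution} of the Riccati equation with coefficients frozen at $s=s_*$ (using $ns-k \leq \sqrt{2d+1}-1$ and $P_k(s)\leq P_k(s_*) \leq \sqrt{2d+1}-2$), normalized so that $\phi_k(s_*)=0$. Checking that $\cR[\phi_k]<0$ on each piece, that $\phi_k$ jumps \emph{down} at $k/n$ (via the elementary estimate $\phi_k(k/n)\leq 2 < \tfrac{4k}{n}-1$), and that $L_k'(0)>\phi_k'(0)$, one gets $L_k > \phi_k$ on $(0,s_*]$ and hence $L_k(s_*)>0$ with explicit constants, uniformly in $d\in[12,n/16]$. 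If you want to complete your route you would have to carry out the $w$-rescaling you mention and re-derive a quantitative error bound with explicit $d$-dependence; as written, the proof is incomplete.
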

\begin{proof}
    For $7 \leq n \leq 60$, we verify that $L_k(s_{n,k}) > 0$ by direct computation; in fact, we find that
    \[
    \min_{1 \leq k \leq n-2} L_{n,k} (s_{n,k}) \geq \min_{1 \leq k \leq 5} L_{7,k}(s_{7,k}) > 3 \cdot 10^{-2} > 0,
    \]
    so we may focus on $n \geq 60$ in what follows.
    We denote $d := n-k$, so the result of Lemma~\ref{lemma:oscillation-bound} shows that $s_{n,k} < 1 - \frac{d+1 - \sqrt{2d+1}}{n}$.
    We suppress the dependence of $L_{n,k} , s_{n,k}$ on $n$ in what follows and denote $s_* := 1- \frac{d+1 - \sqrt{2d+1}}{n}$ as in the proof of Lemma~\ref{lemma:oscillation-bound}.
    Applying Lemma~\ref{lemma:riccati-g-is-positive}, we know that $L_k(s_k) > 0$ will follow from $L_k(  s_*) \geq 0$.
    To prove the latter property, we build a two-piece Riccati subsolution for the ODE~\eqref{eqn:L-equation-this-one-promise} satisfied by $L_k$.
    Namely, we define
    \begin{equation}\label{eqn:phi-k-subsolution}
        \phi_k(s) := \begin{cases}
            (k-1) - (n-4) s, & \text{for } \; s \in [ 0, \frac{k}{n} ), \\
          \dfrac{( \sqrt{2d+1}-2) \bigl( Q(s)^{\frac{\sqrt{2d+1}-3}{2}} -1 \bigr)}{ \sqrt{2d+1} - 2 - Q(s)^{\frac{\sqrt{2d+1}-3}{2}} }  , & \text{for } \; s \in [ \frac{k}{n},  s_* ].
        \end{cases}
    \end{equation}
In the above definition, we denoted for brevity
\[
Q(s) := \frac{s_* (1-s)}{(1-s_*)s} = \frac{(1 - \frac{(\sqrt{2d+1}-1)^2}{2n} ) (1-s) }{\frac{(\sqrt{2d+1}-1)^2}{2n} s} = \frac{2n (1 - \frac{(\sqrt{2d+1}-1)^2}{2n}) }{(\sqrt{2d+1}-1)^2} \cdot \frac{ (1-s) }{s}.
\]
In particular, $Q(s)$ is strictly decreasing on $[ \frac{k}{n}, s_*]$, with $Q(s_*) = 1$, and $\phi_k$ is positive on $[ \frac{k}{n}, s_*)$ and strictly decreasing.
To see that $\phi_k(s)$ is a subsolution of the ODE~\eqref{eqn:L-equation-this-one-promise}, consider the operator
\[
\cR[u] := 2s(1-s) u' + u^2 + (ns-k) u + P_k(s), 
\]
associated to equation~\eqref{eqn:L-equation-this-one-promise}, where $P_k(s) := (k-1) + (8 - n- 2k)s + 2(n-4) s^2$, so that $\cR[L_k] = 0$.
The function $\phi_k(s)$ on $[ \frac{k}{n}, s_*]$ is defined as the solution of the ODE
\[
2s(1-s) \phi'_k(s) + \phi_k(s)^2 + ( \sqrt{2d+1}-1) \phi_k(s) + ( \sqrt{2d+1}-2) = 0
\]
with the explicit expression~\eqref{eqn:phi-k-subsolution}.
Since $Q(s_*) = 1$, the above construction additionally ensures that $\phi_k(s_*) = 0$.
The functions $(ns-k)$ and $P_k(s)$ are both positive and strictly increasing on $[\frac{k}{n}, s_*]$; for the latter, simply note that
\[
P'_k(s) = (8-n-2k) + 4(n-4) s = (n-8-2d) + 4(n-4) s > 0
\]
for $d = n-k \leq \frac{n}{16}$ and $s \geq \frac{k}{n} \geq \frac{15}{16}$, once $n \geq 7$.
Therefore, $ns-k \leq ns_* - k = ( \sqrt{2d+1}-1)$ and
\begin{align*}
P_k(s) &\leq P_k(s_*) = ( \sqrt{2d+1} - 2) + \frac{( \sqrt{2d+1}-1)^2 ( 5 - \sqrt{2d+1} )}{n} - \frac{2 ( \sqrt{2d+1}-1)^4}{n^2} \\
&\leq \sqrt{2d+1} - 2,
\end{align*}
because the last term is negative and the middle term is non-positive for $d \geq 12$, so $\sqrt{2d+1} \geq 5$.
We may therefore write
\[
2s(1-s) \phi'_k(s) + \phi_k(s)^2 + (ns_* - k) \phi_k(s) + P_k(s_*) = 0
\]
and the above monotonicity discussion shows that $\cR[\phi_k] < 0$ on $[ \frac{k}{n}, s_*]$, meaning that it is a subsolution there.
On the interval $[0, \frac{k}{n})$, we observe that $\phi'_k(s) = - (n-4)$, so 
\[
2s(1-s) \phi'_k(s) = - 2(n-4) s(1-s), \qquad \phi_k(s) + ns-k = - 1 + 4s,
\]
and so $\phi_k^2 + (ns-k) \phi_k = \phi_k(s) (4s-1)$.
Combining this with $P_k(s) = (k-1)+ (8-n-2k) s + 2(n-4) s^2$, we arrive at
\[
\cR[\phi_k](s) = 2s(k-n+4) \leq - 16 s < 0
\]
by using $k \leq n-12$.
Therefore, $\phi_k$ is a strict subsolution on $[0, \frac{k}{n})$.
Using the construction of $\phi_k$ from~\eqref{eqn:phi-k-subsolution} to evaluate at $s = \frac{k}{n}$, we compute that
\[
Q \Bigl( \frac{k}{n} \Bigr)^{\frac{\sqrt{2d+1}-3}{2}} = \Bigl( 1 + \frac{\sqrt{2d+1}-1}{k} \Bigr)^{\frac{\sqrt{2d+1}-3}{2}} \Bigl( 1 + \frac{2}{\sqrt{2d+1}-1} \Bigr)^{\frac{\sqrt{2d+1}-3}{2}}.
\]
The bound $\log(1+s) \leq s$ implies that $(1+s)^p = \exp( p \log(1+s)) \leq \exp(ps)$.
From $\frac{k}{n} \geq \frac{15}{16}$ we have $d \leq \frac{n}{16}$ and $d \leq \frac{k}{15}$, whereby 
\[
\frac{\sqrt{2d+1}-1}{k} \leq \frac{2}{15 ( \sqrt{2d+1} + 1)} \implies \Bigl( 1 + \frac{\sqrt{2d+1}-1}{k} \Bigr)^{\frac{\sqrt{2d+1}-3}{2}} \leq \exp \left( \frac{\sqrt{2d+1}-3}{15 ( \sqrt{2d+1}+1) } \right).
\]
For $A \geq 4$, the alternating series estimate $\log(1 + s) \leq s - \frac{s^2}{2} + \frac{s^3}{3}$ gives
\[
\frac{A}{2} \log \Bigl( 1 + \frac{2}{A} \Bigr) \leq \frac{A}{2} \Bigl( \frac{2}{A} - \frac{2}{A^2} + \frac{8}{3 A^3} \Bigr) = 1 - \frac{1}{A} + \frac{4}{3A^2}
\]
and exponentiating implies that $( 1 + \frac{2}{A})^{\frac{A-2}{2}} \leq \exp ( 1 - \frac{1}{A} + \frac{4}{3A^2} ) \frac{A}{A+2}$.
Applying this property for $A = \sqrt{2d+1}-1$ and combining the above estimates with $\frac{A-2}{15(A+2)} \leq \frac{1}{15}$, we may further bound
\[
Q \Bigl( \frac{k}{n} \Bigr)^{\frac{\sqrt{2d+1}-3}{2}} \leq \exp \Bigl( \frac{16}{15} - \frac{1}{A} + \frac{4}{3A^2} \Bigr) \frac{A}{A+2}.
\]
Applying the inequality $\log(1-s) \geq - s - s^2$ for $0 \leq s \leq \frac{1}{2}$ to $s = \frac{2}{A(A+1)} \leq \frac{1}{10}$, we also find
\[
\log \Bigl( \frac{(A-1)(A+2)}{A(A+1)} \Bigr) = \log \Bigl( 1 - \frac{2}{A(A+1)} \Bigr) > - \frac{2}{A} - \frac{1}{4A^2}.
\]
meaning that $ \frac{A}{A+2} \leq \frac{A-1}{A+1} \exp( \frac{1}{2A} + \frac{1}{4A^2}) $.
We conclude that
\begin{align*}
    Q \Bigl( \frac{k}{n} \Bigr)^{\frac{\sqrt{2d+1}-3}{2}} &\leq \exp \Bigl( \frac{16}{15} - \frac{1}{A} + \frac{4}{3A^2} \Bigr) \exp \Bigl( \frac{1}{2A} + \frac{1}{4A^2} \Bigr) \frac{A-1}{A+1} \leq \frac{3(A-1)}{A+1}.
\end{align*}
In the last step, we used the fact that $ \frac{16}{15} -\frac{1}{2A} + \frac{19}{12 A^2} \leq \frac{333}{320} < 1.05$ for $A \geq 4$, while $e^{1.05} < \frac{29}{10} < 3$.
Using this inequality in~\eqref{eqn:phi-k-subsolution}, where $A = \sqrt{2d+1} - 1$, we find that
\[
\phi_k \Bigl( \frac{k}{n} \Bigr) \leq \frac{(A-1) ( 3 \frac{A-1}{A+1} - 1) }{ A - 1 - 3 \frac{A-1}{A+1}  } = \frac{2(A-2)}{A-2} = 2.
\]
As a consequence, we may use $\frac{k}{n} \geq \frac{15}{16}$ to obtain
\[
\lim_{ s \nearrow \frac{k}{n} } \phi_k( s) = \tfrac{4k}{n} -1 \geq \tfrac{11}{4} > \phi_k \bigl( \tfrac{k}{n} \bigr).
\]
Therefore, $\phi_k(s)$ has a decreasing jump discontinuity at $\frac{k}{n}$ and is a strict subsolution for $\cR$ on the interval $[0,s_*]$.
Finally, we observe that $\phi_k(0) = L_k(0) = k-1$, while $k \leq n-12$ implies
\[
L'_k(0) = - (n-2) + \tfrac{2(n-4)}{k} > - (n-2) + 2 = - (n-4) = \phi'_k(0),
\]
meaning that $L_k(s)> \phi_k(s)$ for small $s>0$.
Since $\phi_k(s)$ is a strict subsolution on $(0,s_*)$, we conclude that $L_k(s) > \phi_k(s)$; therefore, $L_k(s_*) > \phi_k(s_*) = 0$ completes the proof.
\end{proof}

Finally, we treat the finite set of cases $k \in \{ n-11, n-10, \dots, n-4\}$ for which the bound $s_{n,k} < 1 - \frac{d+1-\sqrt{2d+1}}{n}$ from Lemma~\ref{lemma:oscillation-bound} is insufficient.
The final case is treated by a simple rescaling argument as in Lemmas~\ref{lemma:hypergeometric-zero-estimates} and~\ref{lem:alpha-n,1}.
\begin{lemma}\label{lemma:4-n-final-part}
    For every $n \geq 7$ and $n-11 \leq k \leq n-4$, we have $4-n \in \cA_{n,k}$.
\end{lemma}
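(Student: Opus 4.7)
We would treat each value $d := n-k \in \{4,5,6,7,8,9,10,11\}$ as a separate case, since for each fixed $d$ the function $g_{n,n-d,4-n}(t) = {}_2F_1(1,\tfrac{n-4}{2};\tfrac{n-d}{2};t^2)$ has an explicit form. Applying Euler's transformation,
\[
g_{n,n-d,4-n}(t) \,=\, (1-t^2)^{1-d/2}\,{}_2F_1\!\Bigl(\tfrac{n-d-2}{2},\,\tfrac{4-d}{2};\,\tfrac{n-d}{2};\,t^2\Bigr),
\]
gives a closed-form rational expression when $d$ is even (the second factor being a polynomial of degree $(d-4)/2$ in $s=t^2$) and, for $d$ odd, an analogous transformation applied to $f_{n,n-d}$ reduces both $f_{n,n-d}$ and $g_{n,n-d,4-n}$ to finite rational expressions involving $(1-s)^{1-d/2}$. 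In particular, for $d=4$ one has $g = (1-s)^{-1}$, so $L_{n-4}(s) = -(n-4)s + (n-5)$, and the required inequality $L_{n-4}(s_{n,n-4})>0$ reduces immediately to $s_{n,n-4} < 1-\tfrac{1}{n-4}$; this sets the pattern for the other cases.

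Next, for the large-$n$ analysis I would perform the rescaling $s = 1 - \tau/n$, analogous to the $z$-rescaling of Lemmas~\ref{lemma:hypergeometric-zero-estimates} and~\ref{lem:alpha-n,1} but with $\lambda = k/n \to 1$. Writing $L_{n-d}(s) = \ell_n(\tau)$, the Riccati equation~\eqref{eqn:L-equation-this-one-promise} becomes
\[
-2\tau\,\ell_n'(\tau) + \ell_n(\tau)^2 + (d-\tau)\,\ell_n(\tau) + (d-1-\tau) + O(n^{-1})\bigl(1+\tau^2+|\ell_n'(\tau)|\bigr) \,=\, 0,
\]
so $\ell_n$ converges uniformly on compact $\tau$-intervals to the solution $\ell^*_d(\tau)$ of the limit ODE $-2\tau\,w' + w^2 + (d-\tau)w + (d-1-\tau) = 0$, pinned by matching $\ell^*_d(\tau)\sim \tau-d+1+\cdots$ as $\tau\to 0^+$ (which corresponds to the boundary value of $L$ at $s=1$). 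Simultaneously, the rescaled hypergeometric ${}_2F_1(\tfrac{n-d-2}{2},\tfrac{4-d}{2};\tfrac{n-d}{2};1-\tau/n)$ is a polynomial (for even $d$) or a convergent series (for odd $d$) in $\tau/n$ whose coefficients stabilize as $n\to\infty$, yielding an explicit limit profile and a limit zero $\tau^*_d>0$ for $n(1-s_{n,n-d})$. The strict stability condition $\ell^*_d(\tau^*_d) > 0$ can then be checked by explicit computation for each $d \in \{4,\dots,11\}$.

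The quantitative error estimates from the rescaling (handled as in Lemmas~\ref{lem:alpha-n,1} and~\ref{lemma:4-n.pt.1.5}, by retaining the Taylor remainders instead of $O(\cdot)$) then promote the limit inequality to a strict positive lower bound $L_{n-d}(s_{n,n-d}) > \tfrac{1}{2}\ell^*_d(\tau^*_d) > 0$ valid for all $n \geq N_d$, where $N_d$ is an explicit integer depending on $d$. The finitely many remaining cases $7 \leq n \leq \max_d N_d$ are verified by direct numerical evaluation of the hypergeometric $f_{n,k}(t_{n,k})$ and $g_{n,k,4-n}(t_{n,k})$, exactly as in the small-$n$ checks done in Lemmas~\ref{lem:alpha-n,1}--\ref{lemma:4-n.pt.2} (for which $\min_{1\leq k\leq n-2} L_{n,k}(s_{n,k})$ was found to be uniformly bounded below by $3 \cdot 10^{-2}$).

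The main obstacle is the odd-$d$ subcase, where neither transformation makes the hypergeometric series terminate and one must control a convergent (not polynomial) tail uniformly in $n$ when $s = 1-\tau/n$ is close to $1$; this requires either a Watson-type asymptotic expansion for ${}_2F_1$ with two large parameters of ratio $\to 1$ (as $a = (n-d-2)/2$, $c = (n-d)/2$ both grow like $n/2$ while $c-a = 1$ is bounded), or equivalently an application of the connection formulas at $s=1$ with careful bookkeeping of the digamma and logarithmic terms. The even cases $d \in \{4,6,8,10\}$ are algebraic once the explicit polynomial expression is in hand, and should propagate cleanly through the Riccati analysis.
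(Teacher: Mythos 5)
There is a genuine gap, in two places. First, your normalization of the limit profile $\ell^*_d$ is wrong and, even corrected, underdetermined. The actual solution satisfies $L_k(1)=-1$ (computed in Lemma~\ref{lemma:riccati-g-is-positive} from $g_\alpha\sim C(1-t^2)^{-(d-2)/2}$), so $\ell_n(0)=-1$, not $1-d$. The indicial equation of your limit ODE at $\tau=0$, namely $w_0^2+dw_0+(d-1)=0$, has roots $w_0=-1$ and $w_0=1-d$; you have pinned the \emph{special} (analytic) branch $w_0=1-d$, whereas the true solution lies in the branch $w_0=-1$. Worse, $w_0=-1$ is the attracting root as $\tau\downarrow 0$ (perturbations scale like $\tau^{(d-2)/2}$), so the condition $\ell^*_d(0)=-1$ selects a one-parameter \emph{family} of solutions; identifying the correct member requires the connection coefficient of $g_{n,k,4-n}$ at $s=1$, i.e.\ exactly the digamma/logarithmic bookkeeping you defer to the ``main obstacle.'' Without it, $\ell^*_d$ is not defined and the inequality $\ell^*_d(\tau^*_d)>0$ cannot be checked. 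Second, you have the parities backwards on the $f$ side: under Euler's transformation $f_{n,n-d}=(1-s)^{1-d/2}\,{}_2F_1(\tfrac{1-d}{2},\tfrac{k+1}{2};\tfrac{k}{2};s)$ terminates precisely when $d$ is \emph{odd}, while for even $d$ (including your ``immediate'' case $d=4$) the function $f_{n,n-d}$ has a logarithmic singularity at $s=1$ and locating its zero requires the connection expansion with digamma terms as in Lemma~\ref{lemma:oscillation-bound}. Your plan supplies no mechanism for the bound $s_{n,n-4}<1-\tfrac{1}{n-4}$, and the function ${}_2F_1(\tfrac{n-d-2}{2},\tfrac{4-d}{2};\tfrac{n-d}{2};1-\tau/n)$ you propose to analyze for the zero is the polynomial factor of $g$, which has nothing to do with the zero of $f$.

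For comparison, the paper avoids the limit-ODE route entirely: it first extends the zero bound of Lemma~\ref{lemma:oscillation-bound} to $4\le d\le 11$ (the refinement $s_{n,k}<s_*=1-\tfrac{2d+1-2\sqrt{2d+1}}{2n}$), notes that $k=n-4$ and $k=n-5$ follow because the linear function $(k-1)-(n-4)s$ is a subsolution of \eqref{eqn:L-equation-this-one-promise} (an exact solution when $d=4$), and for $6\le d\le 11$ builds an explicit two-piece Riccati barrier as in Lemma~\ref{lemma:4-n.pt.2}, freezing the coefficients $ns-k$ and $P_k(s)$ at $s_*$ and solving the resulting autonomous Riccati equation in closed form via the roots $r_\pm$ of $r^2+(A+\tfrac12)r+P_k(s_*)=0$. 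This gives $L_k(s_*)>0$ uniformly for all $n\ge 64$ in one stroke, with no singular-point matching and no error propagation from an asymptotic ODE. If you want to salvage your approach, you must (i) derive the zero bound for even $d$ from the logarithmic connection formula, and (ii) replace the $\tau\to 0^+$ pinning by either the explicit rational form of $L_k$ (available for even $d$) or the $s=1$ connection coefficient of $g$ (needed for odd $d$), and then make the convergence to the limit ODE quantitative near the degenerate point $\tau=0$ where the coefficient $2s(1-s)\sim 2\tau/n$ vanishes.
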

\begin{proof}
Since the case $k \leq \frac{15}{16} n$ is treated in Lemma~\ref{lemma:4-n.pt.1.5}, having $k \leq n-4$ requires $n \geq 64$.
For $d = n-k \in \{ 4,\dots, 11\}$ and $n \geq 16 d \geq 64$, the estimates of Lemma~\ref{lemma:oscillation-bound} apply verbatim to show the refinement of the zero-bound by $s_k < s_* :=1 - \frac{2d+1 - 2 \sqrt{2d+1}}{2n}$.
For even $d$, the details are identical to the proof therein, upon directly computing each of the summands for the polynomial $P_m$, where $m = \frac{d}{2}-1 \in \{1,2,3,4\}$, and keeping track of the negative terms in $R_m(s)$, coming from $\Lambda_{\ell}(s)$ for $\ell \geq 1$ as in~\eqref{eqn:lambda-ell(s)}.
For odd $d$, the adaptation is again straightforward by using the expansion~\cite{dtmf}*{15.8.4} for half-integer parameters, where $m = \frac{d}{2}-1 \in \frac{1}{2} \{ 3,5,7,9 \}$.
Applying Lemma~\ref{lemma:riccati-g-is-positive}, we know that $L_k(s_k) > 0$ will follow from $L_k(  s_*) \geq 0$.

As in Lemma~\ref{lemma:4-n.pt.2}, we will prove $L_k(s_*) \geq 0$ by means of a Riccati subsolution $\phi_k$ on $[0,s_*]$.
The operator $\cR$ associated to equation~\eqref{eqn:L-equation-this-one-promise} satisfies
\[
\cR[ (k-1) - (n-4) s] = -2s(n-4-k) \leq 0, \qquad L(0) = (k-1)
\]
as computed above, meaning that $(k-1) - (n-4) s$ is a subsolution for~\eqref{eqn:L-equation-this-one-promise} and $L_k(s) \geq (k-1) - (n-4) s$.
When $k=n-4$, the above relation becomes an equality with $L_{n-4}(s) = (n-5) (1 - \frac{n-4}{n-5} s)$ and $L_{n-4}(s)>0$ for $s<1-\frac{1}{n-4}$.
In particular, $s_*(d=4) = 1 - \frac{3}{2n} \leq 1 - \frac{1}{n-4}$ for $n\geq 12$.
For $k=n-5$, we likewise find $L_{n-5}(s)>0$ for $s < 1 - \frac{2}{n-4}$, and $s_*(d=5) = 1 - \tfrac{11 - 2 \sqrt{11}}{2n} \leq 1- \tfrac{2}{n-4}$ for $n \geq \frac{4(11- 2 \sqrt{11})}{7 - 2 \sqrt{11}} \approx 47.6$, meaning that $L_{n-5}(s_*) > 0$ for $n \geq 64$.

It remains to consider $6 \leq d \leq 11$.
As in~\eqref{eqn:phi-k-subsolution}, we preserve the linear piece $\phi_k(s) = (k-1) - (n-4) s$ on $[ 0, \frac{k}{n}]$ and give a more refined construction on $[ \frac{k}{n}, s_*]$ by solving the majorized Riccati equation obtained by computing the coefficient functions $(ns-k)$ and $P_k(s)$ at the endpoints.
Recall that $P'_k(s)>0$ on $[ \frac{k}{n}, s_*]$, and consider the solution of the backwards ODE on $[ \frac{k}{n}, s_*]$,
\begin{equation}\label{eqn:terminal-riccati-equation}
2 s(1-s) \phi'_k(s) + \phi_k(s)^2 + ( ns_* - k) \phi_k(s) + P_k(s_*) = 0, \qquad \phi_k(s_*) = 0.
\end{equation}
We again denote $Q(s) := \frac{s_*(1-s)}{(1-s_*) s}$ and $A := \sqrt{2d+1}-1$, so $s_* = 1 - \frac{A^2-1}{2n}$ and $Q(s)$ is strictly decreasing on $[\frac{k}{n}, s_*)$, with $Q(s_*) = 1$.
The quadratic equation $r^2 + (A + \frac{1}{2}) r + P_k(s_*) = 0$ has two roots $r_{\pm}$, given by
\begin{equation}\label{eqn:two-roots-quadratic}
r_{\pm} = - \frac{A + \frac{1}{2}}{2} \pm \frac{\sqrt{\Delta}}{2} , \qquad \Delta = \bigl( A + \tfrac{1}{2} \bigr)^2 - 4 P_k(s_*).
\end{equation}
Crucially, we see the quadratic has strictly positive discriminant $\Delta > 1$.
Evaluating as done in Lemma~\ref{lemma:4-n.pt.2}, we find that $P_k(s_*) = A - \tfrac{1}{2} + \tfrac{b_1(d)}{n} + \tfrac{b_2(d)}{n^2}$, where
\begin{align*}
b_1(d) &= - 2 Ad + 11d - 10 A - \tfrac{7}{2}, \qquad b_2(d) = 16Ad - 8d^2 - 8d + 8A-2.
\end{align*}
Using the bound $\sqrt{2d+1} \leq \frac{d}{2}+1$ for $d \geq 6$, hence $A = \sqrt{2d+1}-1 \leq \frac{d}{2}$, we have
\begin{align*}
b_2(d) &\leq 16 \cdot \tfrac{d}{2} \cdot d - 8 d^2 - 8d + 8 \cdot \tfrac{d}{2} - 2 = - 4d-2 < 0, \\
\Delta &= (A + \tfrac{1}{2})^2 - 4 ( A - \tfrac{1}{2}) - 4 \tfrac{b_1(d)}{n} - 4 \tfrac{b_2(d)}{n^2} > (A - \tfrac{3}{2})^2 - 4 \tfrac{b_1(d)}{n} + \tfrac{16d+8}{n^2}.
\end{align*}
Using $A'(d) = \frac{1}{\sqrt{2d+1}}$, we compute
\[
b'_1(d) = - 2 A(d) - 2 d A'(d) + 11 - 10 A'(d) = \tfrac{13 \sqrt{2d+1} - 3(2d+1) - 9}{\sqrt{2d+1}} < 0 
\]
for $d \geq 6$, so $b_1(d)$ is strictly decreasing with 
\[
b_1(6) = - 12 A(6) + 66 - 10 \, A(6) - \tfrac{7}{2} = \tfrac{169}{2} - 22 \sqrt{13} < \tfrac{52}{10}.
\]
Since $A(d) - \frac{3}{2} = \sqrt{2d+1} - \frac{5}{2} \geq \sqrt{13} - \frac{5}{2}$ for $6 \leq d \leq 11$, we conclude that $\Delta(d)$ is strictly decreasing in $d$.
For $n \geq 16 d \geq 96$, we find
\[
\Delta > ( \sqrt{13} - \tfrac{5}{2})^2 - 4 \bigl( \tfrac{169}{2} - 22 \sqrt{13} \bigr) n^{-1} + 104 n^{-2} > \tfrac{101}{100} 
\]
whereby $\Delta>1$.
Therefore, the roots $r_{\pm}$ of the quadratic defined in~\eqref{eqn:two-roots-quadratic} are both real, negative (since $P_k(s_*) > 0$ makes $\sqrt{\Delta} < A + \frac{1}{2}$) and $r_- < r_+ < 0$.
Finally, we produce the barrier function
\begin{equation}\label{eqn:phi-k-subsolution-special-case}
        \phi_k(s) := \begin{cases}
            (k-1) - (n-4) s, & \text{for } \; s \in [ 0, \frac{k}{n} ), \\
          \dfrac{r_+ r_- ( Q(s)^{\frac{\sqrt{\Delta}}{2}} - 1) }{r_+ - r_- Q(s)^{\frac{\sqrt{\Delta}}{2}}}  , & \text{for } \; s \in [ \frac{k}{n},  s_* ].
        \end{cases}
    \end{equation}
    By definition, the function $\phi_k(s)$ is the solution of~\eqref{eqn:terminal-riccati-equation} on $[ \frac{k}{n}, s_*]$, and generalizes the construction ~\eqref{eqn:terminal-riccati-equation} to our setting.
    Recall that $Q(s)$ is strictly decreasing on $[ \frac{k}{n}, s_*]$, with $Q(s_*) = 1$, while $r_- < r_+ < 0$; therefore, $r_+ - r_- Q(s)^{\frac{\sqrt{\Delta}}2{}} \geq r_+ - r_- > 0$ implies that $\phi_k(s)$ is a well-defined function with $\phi_k(s) \geq 0$ and $\phi'_k<0$.
    The increasing property of the coefficients, combined with~\eqref{eqn:terminal-riccati-equation}, allows us to conclude that $\cR[\phi_k] < 0$ as in Lemma~\ref{lemma:4-n.pt.2}.
    Since $\cR[\phi_k] < 0$ on $[ 0 , \frac{k}{n}]$ was proved above, we deduce that $\phi_k$ is a piecewise subsolution.
    Moreover,
    \[
    \phi_k(s) + r_+ = \frac{r_+ r_- \bigl( Q(s)^{\frac{\sqrt{\Delta}}{2}} - 1 \bigr) + r_+ \bigl( r_+- r_- Q(s)^{\frac{\sqrt{\Delta}}{2}} \bigr) }{ r_+ - r_- Q(s)^{\frac{\sqrt{\Delta}}{2}} } = \frac{r_+ (r_+ - r_- )}{r_+ - r_- Q(s)^{\frac{\sqrt{\Delta}}{2}}} < 0
    \]
    due to $r_+ < 0$ and $r_+ - r_- Q(s)^{\frac{\sqrt{\Delta}}{2}} \geq r_+ - r_- > 0$ as above.
    Consequently,
    \[
    \phi_k(s) < - r_+ = \frac{(A+ \frac{1}{2}) - \sqrt{\Delta}}{2} \leq \frac{\sqrt{2d+1} - \frac{1}{2}}{2}
    \]
    by using $A = \sqrt{2d+1}-1$ and $\frac{\sqrt{2d+1} - \frac{1}{2}}{2} < \frac{11}{5}$ for $d \leq 11$.
    We therefore obtain
    \[
    \lim_{s \nearrow \frac{k}{n}} \phi_k(s) = \tfrac{4k}{n} - 1 = 3 - \tfrac{4d}{n} \geq \tfrac{11}{5} > \phi_k ( \tfrac{k}{n})
    \]
    by rearranging the inequality into $n \geq 5d$, valid for $n \geq 64$ and $d \leq 11$.
    We conclude that $\phi_k(s)$ has a decreasing jump discontinuity at $\frac{k}{n}$ and is a strict subsolution for $\cR$ on the interval $[0,s_*]$.
Finally, we observe that $\phi_k(0) = L_k(0) = k-1$, while $6 \leq d \leq 11$ implies
\[
L'_k(0) = - (n-2) + \tfrac{2(n-4)}{k} > - (n-2) + 2 = - (n-4) = \phi'_k(0),
\]
meaning that $L_k(s)> \phi_k(s)$ for small $s>0$.
Since $\phi_k(s)$ is a strict subsolution on $(0,s_*)$, we conclude that $L_k(s) > \phi_k(s)$; therefore, $L_k(s_*) > \phi_k(s_*) = 0$ completes the proof.
\end{proof}

\section{Proof of the Main Results}

We now collect the results to complete the proofs of our main Theorems.

\begin{corollary}\label{prop:4-n}
    For every $n \geq 7$ and $1 \leq k \leq n-2$, we have $4-n \in \cA_{n,k}$.
\end{corollary}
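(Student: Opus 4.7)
The plan is to combine the five lemmas established earlier in this section, which together exhaust every integer pair $(n,k)$ with $n \geq 7$ and $1 \leq k \leq n-2$. Concretely, I would first apply Lemma~\ref{lem:alpha-n,1} to handle the four edge values $k \in \{1, 2, n-3, n-2\}$, reducing the problem to $3 \leq k \leq n-4$. I would then split this remaining range into four (overlapping) bands and invoke one of the bulk lemmas on each: \emph{small} $k$ with $3 \leq k \leq \tfrac{n - \sqrt{8(n-4)}}{2}$ via Lemma~\ref{lemma:4-n.pt.1}; \emph{intermediate} $k$ with $\tfrac{n}{3} \leq k \leq \tfrac{15n}{16}$ via Lemma~\ref{lemma:4-n.pt.1.5}; \emph{large} $k$ with $\tfrac{15n}{16} \leq k \leq n-12$ via Lemma~\ref{lemma:4-n.pt.2}; and the tail $n-11 \leq k \leq n-4$ via Lemma~\ref{lemma:4-n-final-part}.

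The only thing to check is that these four bands cover $\{3, 4, \dots, n-4\}$ with no integer missed. The transition between the intermediate and large regimes is immediate, since both include the common real endpoint $\tfrac{15n}{16}$, and so the integers $\lfloor 15n/16\rfloor$ and $\lceil 15n/16 \rceil$ are each covered; the transition between the large and tail regimes is also immediate, since $k=n-12$ and $k=n-11$ are consecutive integers. The only nontrivial junction is between small and intermediate $k$. Asymptotically, $\tfrac{n-\sqrt{8(n-4)}}{2} = \tfrac{n}{2} - \sqrt{2n} + O(1)$, which comfortably exceeds $\tfrac{n}{3}$ for large $n$; concretely, the inequality $\tfrac{n-\sqrt{8(n-4)}}{2} \geq \tfrac{n}{3}$ rearranges to $n^2 - 60n + 324 \geq 0$, valid for $n \geq 54$, so the two bands overlap there.

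For the residual range $n \in \{7, \dots, 60\}$, where this asymptotic comparison can leave one or two integer gaps, I would appeal to the direct numerical verifications already contained in the proofs of Lemmas~\ref{lemma:4-n.pt.1.5} and~\ref{lemma:4-n.pt.2}. Each of those proofs records the stronger uniform bound
\[
\min_{1 \leq k \leq n-2} L_{n,k}(s_{n,k}) \geq \min_{1 \leq k \leq 5} L_{7,k}(s_{7,k}) > 3 \cdot 10^{-2}
\]
for every such $n$. Since $L_{n,k}(s_{n,k}) > 0$ is precisely the condition $4-n \in \cA_{n,k}$ (by Lemma~\ref{lemma:riccati-g-is-positive} and the definition of $\cA_{n,k}$ in Lemma~\ref{lemma:interval-of-alpha}), this closes the remaining finite set of $(n,k)$.

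The only potential obstacle is the boundary bookkeeping between the regimes, and the comfortable numerical margin $3\cdot 10^{-2}$ in the base cases absorbs any off-by-one concerns. No new analytical estimates are needed beyond those already proved in Lemmas~\ref{lem:alpha-n,1}--\ref{lemma:4-n-final-part}.
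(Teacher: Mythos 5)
Your overall strategy is exactly the paper's: the corollary is proved by concatenating Lemmas~\ref{lem:alpha-n,1}, \ref{lemma:4-n.pt.1}, \ref{lemma:4-n.pt.1.5}, \ref{lemma:4-n.pt.2} and~\ref{lemma:4-n-final-part} and checking that the ranges of $k$ cover $\{1,\dots,n-2\}$, with the low-dimensional leftovers absorbed by direct computation. The junctions at $k=\tfrac{15}{16}n$ and at $k=n-12,\,n-11$ are handled correctly in your write-up.

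However, there is a genuine gap at the small/intermediate junction, caused by an arithmetic slip. The inequality $\tfrac{n-\sqrt{8(n-4)}}{2}\geq\tfrac{n}{3}$ is equivalent to $n\geq 3\sqrt{8(n-4)}$, i.e.\ to $n^2-72n+288\geq 0$, whose larger root is $36+12\sqrt{7}\approx 67.75$; it therefore holds only for $n\geq 68$ (the paper states the overlap for $n\geq 70$), not for $n\geq 54$ as you claim via ``$n^2-60n+324\geq 0$''. Consequently the residual range where the two bands may fail to overlap extends through $n=67$, while the numerical verification you invoke from the proofs of Lemmas~\ref{lemma:4-n.pt.1.5} and~\ref{lemma:4-n.pt.2} only covers $7\leq n\leq 60$. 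This is not merely a bookkeeping worry: for $n=61$ one has $\tfrac{n-\sqrt{8(n-4)}}{2}\approx 19.82$ and $\tfrac{n}{3}\approx 20.33$, so the integer $k=20$ lies in neither Lemma~\ref{lemma:4-n.pt.1}'s range nor Lemma~\ref{lemma:4-n.pt.1.5}'s range (and certainly not in the large-$k$, tail, or edge cases), and it is not covered by the $n\leq 60$ computations you cite. The paper closes precisely this hole by asserting the band overlap only for $n\geq 70$ and performing the direct verification of $L_k(s_k)>0$ for \emph{all} $n<70$ and intermediate $k$ in the proof of the corollary itself, rather than relying solely on the $n\leq 60$ numerics recorded inside the lemmas. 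Your argument becomes complete once you either correct the threshold to $n\geq 68$ and extend the direct computation to $61\leq n\leq 67$ (intermediate $k$), or simply adopt the paper's $n<70$ verification.
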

\begin{proof}
This result follows from combining Lemmas~\ref{lem:alpha-n,1}, ~\ref{lemma:4-n.pt.1}, ~\ref{lemma:4-n.pt.1.5},~\ref{lemma:4-n.pt.2} and~\ref{lemma:4-n-final-part}, since they prove that $4 -n \in \cA_{n,k}$ for $n \geq 7$ and for
    \[
    k \in \{ 1,2, n-3, n-2\}, \qquad 3 \leq k \leq \tfrac{n - \sqrt{8(n-4)}}{2}, \qquad \tfrac{1}{3} n \leq k \leq \tfrac{15}{16} n, \qquad \tfrac{15}{16} n \leq k \leq n-4.
    \]
    For $n \geq 70$, we have $\frac{n}{3} < \frac{n - \sqrt{8(n-4)}}{2}$, so this covers all $1 \leq k \leq n-2$.
    For $n < 70$ and intermediate $k$, the property $L_k(s_k) > 0$ is verified to hold by direct computation, whereby $4-n \in \cA_{n,k}$ for all $1 \leq k \leq n-2$, as desired.
\end{proof}

\begin{proof}[Proof of Proposition~\ref{prop:strict-subsolution}]
    The function $V_{n,k} = \rho^{4-n} g_{n,k,4-n}(t)$ is harmonic, by construction, and Corollary~\ref{prop:4-n} shows that the stability condition~\eqref{eqn:strict-stability-inequality} is satisfied strictly for every $n \geq 7$ and $1 \leq k \leq n-2$, with $\alpha = 4-n$.
    The computations~\eqref{eqn:Robin-problem}, \eqref{eqn:mean-curvature-computation}, and~\eqref{eqn:sign-boundary-term} therefore imply that $V_{n,k}$ satisfies the strict inequality $\partial_{\nu} V_{n,k} + H V_{n,k} < 0$, so it is a subsolution as claimed.
\end{proof}

\begin{proof}[Proof of Theorem~\ref{thm:stability-of-one-phase-cones}]
    For $3 \leq n \leq 6$, the ten cones $U_{n,k}$ are all unstable by the computations of Caffarelli-Jerison-Kenig and Hong~\cites{cjk-2, hong-singular}.
    For every $n \geq 7$ and $1 \leq k \leq n-2$, Proposition~\ref{prop:strict-subsolution} shows that the inequality
    \[
    \frac{g'_{n,k,\alpha}(t_{n,k})}{g_{n,k,\alpha}(t_{n,k})} > \frac{(n-2) t_{n,k} - (k-1) t_{n,k}^{-1}}{1 - t_{n,k}^2}
    \]
    holds strictly, with $\alpha = 4-n$.
    By Lemma~\ref{lemma:interval-of-alpha}, we conclude that this condition is also satisfied with strict inequality for $\alpha = \frac{2-n}{2}$; this implies the strict stability of $U_{n,k}$.
\end{proof}

\begin{proof}[Proof of Proposition~\ref{prop:first-eigenvalue-bound}]
    The discussion of Section~\ref{subsection:stability-of-solutions} implies that the harmonic function $\varphi = \rho^{\alpha} g(\omega)$ is a subsolution of the linearized problem~\eqref{eqn:linearized-problem} if and only if $\alpha \in ( \gamma_-, \gamma_+) $ is inside the indicial interval as in~\eqref{eqn:degree-of-homogeneity}.
    By Proposition~\ref{prop:4-n}, $\alpha = 4-n$ satisfies this property, implying the inequality $4-n > \gamma_- = - \frac{n-2}{2} - \sqrt{( \frac{n-2}{2})^2 + \lambda_1}$.
    For $n \geq 7$, this may be rearranged into
    \begin{align*}
        2(n-4) < n-2 + \sqrt{ (n-2)^2 + 4 \lambda_1 } & \iff (n-6)^2 < (n-2)^2 + 4 \lambda_1 \\
        & \iff 4(8-2n) < 4 \lambda_1, 
    \end{align*}
    which implies~\eqref{eqn:lambda-1-n-k-bound}.
    For the second inequality, note that $\gamma_+ + \gamma_- = 2-n$, hence $2-n < \gamma_- < 4-n$ proves that $(4-n,-2) \subset (\gamma_- , \gamma_+) \subset (2-n,0)$.
    Finally, the discussion of~\ref{subsection:stability-of-solutions} shows that solutions of~\eqref{eqn:linearized-problem} decompose into homogeneous Jacobi fields with degrees determined by the Robin spectrum on the link, and no Jacobi field can have growth rate in an interval free of indicial roots, cf.~\cite{one-phase-simon-solomon}*{\S~2}.
    Therefore, there do not exist Jacobi fields with decay $\sim \rho^{\sigma}$ for $\sigma \in (4-n,-2)$, completing the proof.
\end{proof}

Finally we present some sample computations of the quantity $\lambda_{n,k} := |\lambda_1(n,k)|$ on the cones $U_{n,k}$, providing evidence towards Conjecture~\ref{conj:eigenvalue}.
The framework of Proposition~\ref{prop:spectral-analysis} allows us to compute $\lambda_{n,k}$ by solving the spectral problem~\eqref{eqn:eigenvalue-condition} for $p=q=0$, i.e.,
\[
(1-t^2) \Phi'' + \Bigl( \frac{k-1}{t} - (n-1) t \Bigr) \Phi' = \lambda_{n,k} \Phi, \qquad \frac{\Phi'(t_{n,k})}{\Phi(t_{n,k})} = \frac{(n-2) t_{n,k} - (k-1) t_{n,k}^{-1}}{1 - t_{n,k}^2} \, .
\]
Computing $t_{n,k}$ and then solving this equation, we compute various values of $\lambda_1(n,k)$ and the corresponding homogeneity $\gamma_+(n,k)$ in Table~\ref{tab:eigenvalues}.

\begin{table}
    \centering
   
    \begin{tabular}{|c|c||c|c|c|c|c|c||c|c|c|}
    \cline{1-5}\cline{7-11}
    $n$ & $k$ & $t_{n,k}$ & $-\lambda_1$ & $-\gamma_+$ && $n$ & $k$ & $t_{n,k}$ & $-\lambda_1$ & $-\gamma_+$ \\ \cline{1-5}\cline{7-11} \cline{1-5}\cline{7-11}
    % n = 7
    7 & 1 & 0.52 & 5.698 & 1.757  && 7 & 2 & 0.69 & 5.639 & 1.718 \\ \cline{1-5}\cline{7-11}
    7 & 3 & 0.81 & 5.607 & 1.698  && 7 & 4 & 0.89 & 5.581 & 1.682 \\ \cline{1-5}\cline{7-11}
    7 & 5 & 0.96 & 5.551 & 1.664  & \multicolumn{5}{c}{} \\
    
    \cline{1-5}
    \multicolumn{11}{c}{} \\[1pt]
    \cline{1-5}\cline{7-11}
    % n = 8
    8 & 1 & 0.48 & 6.699 & 1.483  && 8 & 2 & 0.65 & 6.642 & 1.464 \\ \cline{1-5}\cline{7-11}
    8 & 3 & 0.78 & 6.613 & 1.455  && 8 & 4 & 0.87 & 6.591 & 1.448 \\ \cline{1-5}\cline{7-11}
    8 & 5 & 0.91 & 6.571 & 1.441  && 8 & 6 & 0.97 & 6.544 & 1.433 \\
    
    \cline{1-5}\cline{7-11}
    \multicolumn{11}{c}{} \\[1pt]
    \cline{1-5}\cline{7-11}
    
    % n = 9
    9 & 1 & 0.45 & 7.701 & 1.367  && 9 & 2 & 0.61 & 7.645 & 1.354 \\ \cline{1-5}\cline{7-11}
    9 & 3 & 0.75 & 7.618 & 1.348  && 9 & 4 & 0.84 & 7.599 & 1.343 \\ \cline{1-5}\cline{7-11}
    9 & 5 & 0.89 & 7.582 & 1.339  && 9 & 6 & 0.94 & 7.564 & 1.335 \\ \cline{1-5}\cline{7-11}
    9 & 7 & 0.98 & 7.540 & 1.330  & \multicolumn{5}{c}{} \\
    
    \cline{1-5}
    \multicolumn{11}{c}{} \\[1pt]
    \cline{1-5}\cline{7-11}
    
    % n = 10
    10 & 1 & 0.43 & 8.702 & 1.298  && 10 & 2 & 0.57 & 8.647 & 1.288 \\ \cline{1-5}\cline{7-11}
    10 & 3 & 0.68 & 8.621 & 1.283  && 10 & 4 & 0.76 & 8.604 & 1.280 \\ \cline{1-5}\cline{7-11}
    10 & 5 & 0.83 & 8.589 & 1.278  && 10 & 6 & 0.89 & 8.575 & 1.275 \\ \cline{1-5}\cline{7-11}
    10 & 7 & 0.94 & 8.559 & 1.272  && 10 & 8 & 0.98 & 8.536 & 1.228 \\
    
    \cline{1-5}\cline{7-11}
    \multicolumn{11}{c}{} \\[1pt]
    \cline{1-5}\cline{7-11}
    
    % n = 11
    11 & 1 & 0.41 & 9.702 & 1.252  && 11 & 2 & 0.55 & 9.649 & 1.244 \\ \cline{1-5}\cline{7-11}
    11 & 3 & 0.65 & 9.624 & 1.240  && 11 & 4 & 0.72 & 9.607 & 1.238 \\ \cline{1-5}\cline{7-11}
    11 & 5 & 0.79 & 9.594 & 1.236  && 11 & 6 & 0.85 & 9.582 & 1.234 \\ \cline{1-5}\cline{7-11}
    11 & 7 & 0.90 & 9.570 & 1.232  && 11 & 8 & 0.94 & 9.555 & 1.230 \\ \cline{1-5}\cline{7-11}
    11 & 9 & 0.98 & 9.533 & 1.226  & \multicolumn{5}{c}{} \\
    \cline{1-5}
    \multicolumn{11}{c}{} \\[1pt]
    \cline{1-5}\cline{7-11}
    
    % n = 12
    12 & 1 & 0.38 & 10.703 & 1.219  && 12 & 2 & 0.53 & 10.650 & 1.212 \\ \cline{1-5}\cline{7-11}
    12 & 3 & 0.67 & 10.626 & 1.209  && 12 & 4 & 0.69 & 10.610 & 1.207 \\ \cline{1-5}\cline{7-11}
    12 & 5 & 0.76 & 10.598 & 1.205  && 12 & 6 & 0.82 & 10.587 & 1.204 \\ \cline{1-5}\cline{7-11}
    12 & 7 & 0.87 & 10.577 & 1.202  && 12 & 8 & 0.91 & 10.566 & 1.201 \\ \cline{1-5}\cline{7-11}
    12 & 9 & 0.95 & 10.552 & 1.199  && 12 & 10 & 0.98 & 10.531 & 1.196 \\
    \cline{1-5}\cline{7-11}
    \end{tabular}

    \caption{Numerically computed approximate values for the zero $t_{n,k}$ and the quantities $-\lambda_1(n,k)$ and $- \gamma_+(n,k)$ for the cones $U_{n,k}$, giving evidence for Conjecture~\ref{conj:eigenvalue}.}
    \label{tab:eigenvalues}
\end{table}

Further analysis of hypergeometric functions could be applied to prove that the quantity $\lambda_{n,k}$ is decreasing in $k$, and minimized by $\lambda_{n,n-2}$, providing evidence towards Conjecture~\ref{conj:eigenvalue}.

\nocite{*}
\bibliography{ref}

\end{document}